\newcommand{\F}{\mathbb{F}} 
\def\RCS$#1: #2 ${\expandafter\def\csname RCS#1\endcsname{#2}}
\DeclareMathOperator{\Frob}{Frob}
\newcommand{\To}{\longrightarrow}
\newcommand{\into}{\hookrightarrow}
\newcommand{\isoto}{\stackrel{\sim}{\To}}
\newcommand{\p}{\mathfrak{p}}
\newcommand{\M}{\mathcal{M}}
\newcommand{\N}{\mathcal{N}}
\newcommand{\bigO}{\mathcal{O}}
\newcommand{\Z}{\mathbb{Z}}
\newcommand{\A}{\mathbb{A}}
\newcommand{\Q}{\mathbb{Q}}
\newcommand{\T}{\mathbb{T}}
\newcommand{\C}{\mathbb{C}}
\newcommand{\rhobar}{\overline{\rho}}
\newcommand{\Gal}{\operatorname{Gal}}
\newcommand{\GL}{\operatorname{GL}}
\newcommand{\PGL}{\operatorname{PGL}}
\newcommand{\Qbar}{\overline{\Q}}
\newcommand{\Qp}{\Q_p}
\newcommand{\Fp}{\F_p}
\newcommand{\Qpbar}{\overline{\Q}_p}
\newcommand{\Fpbar}{\overline{\F}_p}
\newcommand{\BrMod}{\operatorname{BrMod}}
\newcommand{\End}{\operatorname{End}}
\newcommand{\Aut}{\operatorname{Aut}}
\newcommand{\Fil}{\mathrm{Fil}}
\newcommand{\Nm}{\mathrm{N}}
\newcommand{\ur}{\mathrm{ur}}
\newcommand{\JL}{\mathrm{JL}}
\newcommand{\LL}{\mathrm{LL}}
\newcommand{\St}{\mathrm{St}}
\newcommand{\om}{\widetilde{\omega}}
\newcommand{\omt}{\om_2}
\newcommand{\OO}{\calO}
\newcommand{\m}{\frakm}
\newcommand{\calO}{{\mathcal O}}
\newcommand{\frakm}{\mathfrak{m}}
\newcommand{\Zp}{\Z_{p}}
\newcommand{\st}{\mathrm{st}}
\newcommand{\G}{{\mathcal G}}
\newcommand{\univ}{\operatorname{univ}}
\newcommand{\dd}{\operatorname{dd}}
\numberwithin{equation}{section}
\newtheorem{thm}[equation]{Theorem}
\newtheorem{cor}[equation]{Corollary}
\newtheorem{lem}[equation]{Lemma}
\newtheorem{prop}[equation]{Proposition}
 \theoremstyle{definition}
\newtheorem{defn}[equation]{Definition} %\theoremstyle{remark}
\newtheorem{rem}[equation]{Remark} %\numberwithin{equation}{subsection}
\newtheorem{remark}[equation]{Remark} %\numberwithin{equation}{thm}
\begin{document}
\title{Serre weights for
  quaternion algebras}
\author{Toby Gee} \email{tgee@math.harvard.edu} \address{Department of
  Mathematics, Harvard University}
\author{David Savitt} \email{savitt@math.arizona.edu}
\address{Department of Mathematics, University of Arizona}
\thanks{The first author was partially
  supported by NSF grant DMS-0841491, and the second author was
partially supported by NSF grants DMS-0600871 and DMS-0901049.}

\keywords{Galois representations, Serre weights, quaternion algebras, Breuil modules}
\subjclass[2000]{11F33, 11F80.}
\begin{abstract}We study the possible weights of an irreducible
  2-dimensional
 mod~$p$  representation of $\Gal(\overline{F}/F)$
which is modular in the sense of that it comes from an automorphic
form on a definite quaternion algebra with centre $F$ which is ramified at all
places dividing $p$, where $F$
is a totally real field. In most
cases we determine the precise list of possible weights; in the
remaining cases we determine the possible weights up to a short and
explicit list of exceptions.
\end{abstract}

\maketitle %\tableofcontents
\section{Introduction}%\versioninfo

Let $F$ be a totally real field and let $p$ be a prime number.  In
this paper we formulate, and largely prove, an analogue of
the weight part of Serre's conjecture \cite{ser87} for automorphic forms
on quaternion algebras over $F$ which are ramified at all places dividing
$p$. 
%The study of the possible weights of a mod $p$
%modular Galois representation was initiated by Serre in his famous
%paper \cite{ser87}. This proposed a concrete conjecture (``the
%weight part of Serre's conjecture'') relating the weights to the
%restriction of the Galois representation to an inertia subgroup at
%$p$. This conjecture was resolved (at least for $p>2$) by work of
%Coleman-Voloch, Edixhoven and Gross (see \cite{MR1176206}).

In recent years, a great deal of attention has been given to the problem of generalizing the weight part of Serre's
conjecture beyond  the case of $\GL(2,\Q)$, beginning with the seminal paper
\cite{bdj}
which considered the situation for Hilbert modular forms.  Let $G_F$
denote the absolute Galois group of $F$; then to any irreducible modular
representation $$\rhobar:G_F\to\GL_2(\Fpbar)$$ there is associated a
set of weights $W(\rhobar)$, the set of weights in which $\rhobar$ is
modular (see section \ref{sec: notation} for the definitions of
weights and of what it means for $\rhobar$ to be modular of a certain
weight). Under the assumption that $p$ is unramified in $F$, the paper
\cite{bdj} associated to $\rhobar$ a set of weights $W^{?}(\rhobar)$, and
conjectured that $W^?(\rhobar)=W(\rhobar)$.  Schein
\cite{scheinramified} subsequently proposed a generalisation  that, in the
tame case (where the restrictions of $\rhobar$ to inertia subgroups at
places dividing $p$ are semisimple),  removes the
restriction that $p$ be unramified in $F$.  When $p$ is either unramified or totally
ramified in $F$ many cases of these
conjectures have been proved, in \cite{gee053} and \cite{geesavitttotallyramified}
respectively, but the general case has so far been out of reach.

%The set $W^?(\rhobar)$ depends only on the restrictions of $\rhobar$
%to inertia subgroups at places dividing $p$. In the case that these
%restrictions are tamely ramified, the conjecture is completely
%explicit, while in the general case the set depends on some rather
%delicate questions involving extensions of crystalline characters.

As far as we know, there is no corresponding conjecture in the
literature for automorphic forms on quaternion algebras that are
ramified at $p$ (although the results in the case $F=\Q$ are easily deduced
from the discussion in section 4 of \cite{MR1802794}). We specify a
conjectural set of weights $W^?(\rhobar)$, depending only on the
restrictions of $\rhobar$ to decomposition subgroups at places dividing
$p$. In the case that these restrictions are semisimple, the
conjecture is completely explicit (and depends only on the
restrictions to inertia subgroups).  In the general case the set $W^?(\rhobar)$
is defined in terms of the existence of certain potentially
Barsotti-Tate lifts of specific type, and so depends on some rather
delicate questions involving extensions of crystalline characters. In
fact, we always make a definition in terms of the existence of certain potentially
Barsotti-Tate lifts, and in the semisimple case we make this
description explicit by means of calculations with Breuil modules and
strongly divisible modules.

We assume that $p$ is odd, and that $\rhobar|_{G_{F(\zeta_p)}}$ is
irreducible. We make a mild additional assumption if $p=5$.  All of
these restrictions are imposed by our use of the modularity lifting
theorems of \cite{kis04} (or rather, by their use in
\cite{gee061}). Under these assumptions, we are able to prove that if
for each place $v|p$, $\rhobar|_{G_{F_v}}$ is not of the form
$\left(\begin{smallmatrix}\psi_1 & *\\ 0
    &\psi_2\end{smallmatrix}\right)$ with $\psi_1/\psi_2$ equal to the
mod $p$ cyclotomic character, then $W^?(\rhobar)=W(\rhobar)$. In the
exceptional cases we establish that $W(\rhobar)\subset W^?(\rhobar)$,
with equality up to a short list of possible exceptions (for example,
in the case that there is only one place of $F$ above $p$ there is
only one exception). Our techniques are analogous to those of
\cite{gee053} and \cite{geesavitttotallyramified}. As in those papers,
the strategy is to construct modular lifts of
$\rhobar$ which are potentially semistable of specific type, using
the techniques of Khare-Wintenberger, as explained in
\cite{gee061}.

The significant advantage in the present situation over those
considered in \cite{geesavitttotallyramified}  and \cite{gee053}  is that the property of being modular of a
specific weight corresponds exactly to the property of having a lift
of some specific type (in the case considered in \cite{gee053} this
correspondence was considerably weaker). Accordingly, we have no regularity
assumption on the weights,  we do not have to use Buzzard's 
``weight cycling'' technique, and especially  we do not need
to make any restriction on the splitting behaviour of $p$ in $F$.

In the case that the restrictions of $\rhobar$ to decomposition groups
at places dividing $p$ are all semisimple, we establish an explicit
description of $W^?(\rhobar)$ by a computation in two stages. In one
step we make use of Breuil modules with descent data, in the same
style as analogous computations in the literature; our calculations
are more complicated than those made in the past, however, as
we have no restrictions on the ramification or inertial degrees of our
local fields.

For the second step, we have to exhibit potentially Barsotti-Tate
lifts of the appropriate types. Writing down such lifts is rather
non-trivial. We accomplish this by means of an explicit construction
of corresponding strongly divisible modules; again, these calculations
are more complicated than those in the literature, because we make no
restrictions on the ramification or inertial degrees of our local
fields.

We also note that while we work throughout with definite quaternion
algebras, it should not be difficult to extend our results to
indefinite algebras; one needs only to establish the analogue of Lemma
\ref{432} (see
for example the proof of Lemma 6 of \cite{MR1272977} for the case $F=\Q$).

We now detail the outline of the paper. In section \ref{sec:
notation} we give our initial definitions and notation. In
particular, we introduce spaces of algebraic modular forms on
definite quaternion algebras, and we explain what it means for
$\rhobar$ to be modular of a specific weight. 

In section \ref{sec:tame lifts} we explain which tame lifts we will
need to consider, and the relationship between the existence of
modular lifts of specified types and the property of being modular
of a certain weight. This amounts to recalling certain concrete
instances of the local Langlands and Jacquet-Langlands correspondences for $\GL_2$ and
local-global compatibility. All of this material is
completely standard.

Section \ref{sec: necessity} begins the work of establishing an
explicit description of $W^?(\rhobar)$, by finding necessary
conditions for the existence of potentially Barsotti-Tate lifts of
particular type, via calculations with Breuil modules. The sufficiency
of these conditions is established in section \ref{sec:local lifts},
by writing down explicit strongly divisible modules. Both sections
make use of a lemma relating the type of the lifts to the descent data
on the Breuil modules and strongly divisible modules, which is
established in section \ref{sec:types-and-descent}.

These local calculations are summarised in section \ref{sec: explicit
  local description of the weights}. Finally, in section \ref{main
  results} we prove our main theorem.

We are grateful to the anonymous referee for reading the paper
carefully and providing a number of valuable suggestions.   The second author thanks the MPIM for its hospitality.

\section{Notation and assumptions}\label{sec: notation}Let $p$ be an odd prime. Fix an
algebraic closure $\Qbar$ of $\Q$, an algebraic closure  $\Qpbar$ of
$\Qp$, and an embedding $\Qbar\into\Qpbar$. We will consider all
finite extensions of $\Q$ (respectively $\Qp$) to be contained in
$\Qbar$ (respectively $\Qpbar$). If $K$ is such an extension, we let
$G_{K}$ denote its absolute Galois group $\Gal(\overline{K}/K)$. Let $F$ be a totally real field. Let $\rhobar:G_{F}\to\GL_{2}(\Fpbar)$ be a continuous
representation. Assume from now on that
$\rhobar|_{G_{F(\zeta_{p})}}$ is absolutely irreducible. If $p=5$ and
the projective image of $\rhobar$ is isomorphic to $\PGL_{2}(\F_{5})$,
assume further that $[F(\zeta_{5}):F]=4$. We normalise the
isomorphisms of local class field theory so that a uniformiser
corresponds to a geometric Frobenius element.

We wish to discuss the Serre weights of $\rhobar$ for quaternion
algebras ramified at all places dividing $p$. We choose to work
with totally definite quaternion algebras. We recall the basic
definitions and results that we need.

Let $D$ be a quaternion algebra with center $F$ which is ramified at
all infinite places of $F$ and at a set $\Sigma$ of finite places
which contains all primes dividing $p$. Fix a maximal order $\bigO_D$
of $D$ and for each finite place $v\notin\Sigma$ fix an isomorphism
$\bigO_{D,v}:=(\bigO_D)_v\isoto M_2(\bigO_{F_v})$. For any finite place $v$ let
$\pi_{v}$ denote a uniformiser of $F_v$.

Let $U=\prod_v U_v\subset (D\otimes_F\mathbb{A}^f_F)^\times$ be a
compact open subgroup, with each $U_v\subset
\bigO_{D,v}^\times$. Furthermore, assume that $U_v=\bigO_{D,v}^\times$
for all $v\in\Sigma$.

Take $A$ a topological $\Z_p$-algebra. For each place $v|p$ fix a
continuous representation $\sigma_v:U_{v}\to\Aut(W_{v})$ with $W_{v}$
a finite $A$-module. Let $\sigma$ denote the representation
$\otimes_{v|p}\sigma_v$ of $U_p:=\prod_{v|p}U_v$, acting on
$W_\sigma:=\otimes_{v|p}W_v$. We regard $\sigma$ as a representation
of $U$ in the obvious way (that is, we let $U_v$ act trivially if
$v\nmid p$). Fix also a character
$\psi:F^\times\backslash(\mathbb{A}^f_F)^\times\to A^\times$ such that
for any finite place $v$ of $F$, $\sigma|_{U_v\cap\bigO_{F_v}^\times}$
is multiplication by $\psi^{-1}$. Then we can think of $W_\sigma$ as a
$U(\mathbb{A}_F^f)^\times$-module by letting $(\mathbb{A}_F^f)^\times$
act via $\psi^{-1}$.

Let $S_{\sigma,\psi}(U,A)$ denote the set of continuous
functions $$f:D^\times\backslash(D\otimes_F\mathbb{A}_F^f)^\times\to
W_\sigma$$ such that for all $g\in (D\otimes_F\mathbb{A}_F^f)^\times$
we have $$f(gu)=\sigma(u)^{-1}f(g)\text{ for all }u\in
U,$$ $$f(gz)=\psi(z)f(g)\text{ for all
}z\in(\mathbb{A}_F^f)^\times.$$We can write
$(D\otimes_F\mathbb{A}_F^f)^\times=\coprod_{i\in I}D^\times
t_iU(\mathbb{A}_F^f)^\times$ for some finite index set $I$ and some
$t_i\in(D\otimes_F\mathbb{A}_F^f)^\times$. Then we
have $$S_{\sigma,\psi}(U,A)\isoto\oplus_{i\in
  I}W_\sigma^{(U(\mathbb{A}_F^f)^\times\cap t_i^{-1}D^\times
  t_i)/F^\times},$$the isomorphism being given by the direct sum of
the maps $f\mapsto f(t_i)$. From now on we make the following
assumption:$$\text{For all
}t\in(D\otimes_F\mathbb{A}_F^f)^\times\text{ the group
}(U(\mathbb{A}_F^f)^\times\cap t^{-1}D^\times t)/F^\times=1.$$ One can
always replace $U$ by a subgroup (satisfying the above assumptions,
and  without changing $U_p$)
for which this holds (\emph{cf.} section 3.1.1 of \cite{kis07}). Under this
assumption $S_{\sigma,\psi}(U,A)$ is a finite $A$-module,
and the functor $W_\sigma\mapsto S_{\sigma,\psi}(U,A)$ is exact in
$W_\sigma$.

We now define some Hecke algebras. Let $S$ be a set of finite places
containing $\Sigma$ and the primes $v$ of $F$ such that $U_v
\neq \bigO_{D,v}^\times$.  Let
$\T^{\operatorname{univ}}_{S,A}=A[T_v,S_v]_{v\notin S}$ be the
commutative polynomial ring in the formal variables $T_v$,
$S_v$. Consider the left action of $(D\otimes_F\mathbb{A}_F^f)^\times$
on $W_\sigma$-valued functions on $(D\otimes_F\mathbb{A}_F^f)^\times$
given by $(gf)(z)=f(zg)$. Then we make $S_{\sigma,\psi}(U,A)$ a
$\T^{\operatorname{univ}}_{S,A}$-module by letting $S_v$ act via the
double coset $U\bigl(
\begin{smallmatrix}
    \pi_{v}&0\\0&\pi_{v}
\end{smallmatrix}
\bigr)U$ and $T_v$ via $U\bigl(
\begin{smallmatrix}
    \pi_{v}&0\\0&1
\end{smallmatrix}
\bigr)U$. These are independent of the choices of $\pi_{v}$. We will write $\T_{\sigma,\psi}(U,A)$ or $\T_{\sigma,\psi}(U)$ for the image of $\T^{\operatorname{univ}}_{S,A}$ in $\End S_{\sigma,\psi}(U,A)$.

Let $\mathfrak{m}$ be a maximal ideal of
$\T^{\operatorname{univ}}_{S,A}$. We say that $\mathfrak{m}$ is \emph{in the
support of $(\sigma,\psi)$} if $S_{\sigma,\psi}(U,A)_\mathfrak{m}\neq
0$. Now let $\bigO$ be the ring of integers in $\Qpbar$, with residue
field $\F=\Fpbar$, and suppose that $A=\bigO$ in the above discussion,
and that $\sigma$ has open kernel and is free as an $\bigO$-module. Consider a maximal ideal
$\mathfrak{m}\subset\T^{\operatorname{univ}}_{S,\bigO}$ with residue
field $\F$ which is in the support of $(\sigma,\psi)$. Then there is a semisimple Galois
representation $\overline{\rho}_\mathfrak{m}:G_F\to\GL_2(\F)$
associated to $\mathfrak{m}$ which is characterised up to conjugacy by
the property that if $v\notin S$ then
$\rhobar_\mathfrak{m}|_{G_{F_v}}$ is unramified, and if $\Frob_v$ is
an arithmetic Frobenius at $v$ then the characteristic polynomial of
$\overline{\rho}_\mathfrak{m}(\Frob_v)$ is the image of $X^2-T_vX+S_v\mathbf{N}v$ in $\F[X]$.

We have the following basic lemma.
\begin{lem}
  \label{432}Let
  $\psi:F^{\times}\backslash(\A_{F}^f)^{\times}\to\bigO^{\times}$ be a
  continuous character, and write $\overline{\psi}$ for the composite
  of $\psi$ with the projection $\bigO^{\times}\to \F^{\times}$. Fix a
  representation $\sigma'$ of $U_p$ on a finite free $\bigO$-module
  $W_{\sigma'}$, and an irreducible representation $\sigma$ of $U_p$ on a
  finite free $\F$-module $W_{\sigma}$. Suppose that we have
  $\sigma'|_{U_v\cap\bigO_{F_v}^\times}=\psi^{-1}|_{U_v\cap\bigO_{F_v}^\times}$
  and
  $\sigma|_{U_v\cap\bigO_{F_v}^\times}=\overline{\psi}^{-1}|_{U_v\cap\bigO_{F_v}^\times}$
  for all finite places $v$.

    Let $\mathfrak{m}$ be a maximal ideal of $\mathbb{T}_{S,\bigO}^{\univ}$.

    Suppose that $W_{\sigma}$ occurs as a $U_p$-module subquotient of $W_{\sigma'}\otimes\F$. If $\mathfrak{m}$ is in the support of $(\sigma,\overline{\psi})$, then $\mathfrak{m}$ is in the support of $(\sigma',\psi)$.

    Conversely, if $\mathfrak{m}$ is in the support of
        $(\sigma',\psi)$, then $\mathfrak{m}$ is in the support of
        $(\sigma,\overline{\psi})$ for some irreducible $U_p$-module
        subquotient $W_{\sigma}$ of $W_{\sigma'}\otimes\F$.
\end{lem}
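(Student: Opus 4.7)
The plan is to combine the stated exactness of the functor $W_{\sigma}\mapsto S_{\sigma,\psi}(U,A)$ with a Jordan--H\"older argument and Nakayama's lemma.

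First I would establish the Hecke-equivariant identification
\[
S_{\sigma',\psi}(U,\bigO)\otimes_{\bigO}\F\;\cong\;S_{\sigma'\otimes\F,\overline{\psi}}(U,\F)
\]
as $\mathbb{T}^{\univ}_{S,\bigO}$-modules. Since we have assumed $(U(\mathbb{A}_F^f)^{\times}\cap t^{-1}D^{\times}t)/F^{\times}=1$ for every $t$, the explicit decomposition recalled above identifies both sides with $\bigoplus_{i\in I}(W_{\sigma'}\otimes\F)$. Because $T_v$ and $S_v$ are defined by the same double-coset formulae over any coefficient ring, and $W_{\sigma'}$ is $\bigO$-free, the identification commutes with the Hecke action.

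Next, I would take a composition series of $W_{\sigma'}\otimes\F$ as an $\F[U_p]$-module. Every constituent inherits the central character $\overline{\psi}^{-1}$ of $W_{\sigma'}\otimes\F$ on $U_v\cap\bigO_{F_v}^{\times}$, so the functor $W\mapsto S_{W,\overline{\psi}}(U,\F)$ applies termwise. Combining its exactness with the exactness of localization at $\mathfrak{m}$, the composition series induces a filtration of $S_{\sigma'\otimes\F,\overline{\psi}}(U,\F)_{\mathfrak{m}}$ whose graded pieces are the modules $S_{\tau,\overline{\psi}}(U,\F)_{\mathfrak{m}}$, with $\tau$ running over the irreducible $U_p$-constituents of $W_{\sigma'}\otimes\F$.

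Both implications then follow. For the first, if $\mathfrak{m}$ is in the support of $(\sigma,\overline{\psi})$ and $W_{\sigma}$ appears as a constituent, then one graded piece of the filtration is nonzero and hence $S_{\sigma'\otimes\F,\overline{\psi}}(U,\F)_{\mathfrak{m}}\neq 0$. By the first step this equals $S_{\sigma',\psi}(U,\bigO)_{\mathfrak{m}}/p$; since $p\in\mathfrak{m}$ (the residue field being of characteristic $p$) and $S_{\sigma',\psi}(U,\bigO)_{\mathfrak{m}}$ is a finitely generated $\bigO_{\mathfrak{m}}$-module, Nakayama's lemma forces $S_{\sigma',\psi}(U,\bigO)_{\mathfrak{m}}\neq 0$. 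The converse is symmetric: Nakayama gives $S_{\sigma'\otimes\F,\overline{\psi}}(U,\F)_{\mathfrak{m}}\neq 0$, so some graded piece in the filtration must be nonzero, producing the required irreducible $W_{\sigma}$. The main technical point to verify carefully is the Hecke-equivariance of the identification in the first step, but this is routine given the explicit double-coset definition of the Hecke operators.
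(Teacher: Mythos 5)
Your proof is correct and is essentially the argument the paper delegates to its references: the first part is "proved just as in Lemma 3.1.4 of Kisin's paper," which is exactly your Hecke-equivariant identification of $S_{\sigma',\psi}(U,\bigO)\otimes_{\bigO}\F$ with $S_{\sigma'\otimes\F,\overline{\psi}}(U,\F)$ followed by a composition-series argument using the exactness of $W_\sigma\mapsto S_{\sigma,\overline{\psi}}(U,\F)$, and the second part is the "basic commutative algebra argument," i.e.\ your Nakayama step. The only point to keep explicit is that the Nakayama direction needs $\mathfrak{m}$ to have residue characteristic $p$ (so that $\mathfrak{m}$ contains the maximal ideal of $\bigO$), which you note and which matches the paper's standing convention that the maximal ideals considered have residue field $\F$.
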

\begin{proof}
  The first part is proved just as in Lemma 3.1.4 of \cite{kis04}, and
  the second part follows, for example from Proposition 1.2.3 of
  \cite{as862}, or from a basic commutative algebra argument.
\end{proof}

We are now in a position to define what it means for a
representation to be modular of some weight. Let $v|p$ be a place of
$F$, so that $U_v=\bigO_{D,v}^\times$. Let $\sigma_v$ be an irreducible
$\F$-representation of $U_v$. Note that if $\Pi_v$ is a uniformiser of
$\bigO_{D,v}$, then $k_{2,v}:=\bigO_{D,v}/\Pi_v$ is a finite field, a
quadratic extension of the residue field $k_v$ of $F_v$. The kernel of
the reduction map $U_v\to k_{2,v}^\times$ is a pro-$p$ group, so
$\sigma_v$ factors through this kernel, and is a
representation of the finite abelian group $k_{2,v}^\times$. It is
therefore one-dimensional. Let $\sigma=\otimes_{v|p}\sigma_v$, which
we will regard as an $\bigO$-module via the natural map $\bigO\to\F$.
\begin{defn}
    We say that $\overline{\rho}$ is \emph{modular of weight $\sigma$} if
        for some $D$, $S$, $U$, $\psi$, and $\mathfrak{m}\subset\T^{\operatorname{univ}}_{S,\bigO} $ as above,
        we have $S_{\sigma,\psi}(U,\bigO)_\mathfrak{m}\neq 0$ and $\overline{\rho}_\mathfrak{m}\cong\overline{\rho}$.
\end{defn}[Here $\rhobar_{\mathfrak{m}}$ is characterised as above,
and exists by Lemma \ref{432} and the remarks above.]
Write
$W(\rhobar)$ for the set of weights $\sigma$ for which $\rhobar$ is 
modular of weight $\sigma$. 
Assume from now on that $\rhobar$ is modular of some weight, and fix
$D$, $S$, $U$, $\psi$, $\mathfrak{m}$ as in the definition.

We also have the following useful lemma, which was first observed by
Serre in the case $F=\Q$ (see remark (11) in Serre's letter to Tate in
\cite{MR1418297}). For each place $v|p$ we let $q_v$ denote the order
of the residue field $k_v$ of $F_v$. If $\sigma_v$ is an irreducible
$\F$-representation of $\bigO_{D,v}^\times$, then by the remarks above
it is a character of $k_{2,v}^\times$. Thus $\sigma_v^{q_v}$ is
another irreducible $\F$-representation, and
$\sigma_v^{q_v^2}=\sigma_v$. Serre observed that the set $W(\rhobar)$
is preserved by this operation. This is essentially a consequence of
the structure of $\bigO_{D,v}$. Let $K_{2,v}=W(k_{2,v})[1/p]$, a
subfield of $D_v$. Note that there is a choice of uniformiser $\Pi_v$
of $D_v$ with the property that conjugation by $\Pi_v$ preserves
$K_{2,v}$, and acts on it via a non-trivial involution. In particular,
the induced action on $k_{2,v}$ is via the $q_v$-th power map.

\begin{lem}\label{lem: set of weights is preserved by local Frobenius}
  Let $v$ be a place of $F$ dividing $p$, and let $\sigma$ be a weight
  as above. Let $\sigma'=\sigma_v^{q_v}\otimes_{w|p,w\neq
    v}\sigma_w$. Then $\rhobar$ is modular of weight $\sigma$ if and
  only if it is modular of weight $\sigma'$.
\end{lem}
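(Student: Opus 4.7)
The plan is to produce an explicit Hecke-equivariant isomorphism between $S_{\sigma,\psi}(U,\bigO)$ and $S_{\sigma',\psi}(U,\bigO)$ induced by right multiplication by a uniformiser $\Pi_v\in D_v^\times$ of the type described before the lemma (so that conjugation by $\Pi_v$ preserves $K_{2,v}$ and acts on $k_{2,v}$ by the $q_v$-th power Frobenius). Fix auxiliary data $D, S, U, \psi, \mathfrak{m}$ witnessing modularity of some weight for $\rhobar$; since conjugation by $\Pi_v$ preserves $\bigO_{D,v}^\times = U_v$, the element $\Pi_v$ (viewed in $(D\otimes_F\A_F^f)^\times$ as being trivial away from $v$) normalises $U$.

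Given $f\in S_{\sigma,\psi}(U,\bigO)$, define $f'(g) := f(g\Pi_v)$. For $u\in U$ one computes
\[
f'(gu) = f(g\Pi_v\cdot \Pi_v^{-1}u\Pi_v) = \sigma(\Pi_v^{-1}u\Pi_v)^{-1}f'(g),
\]
and since $\Pi_v$ commutes with the components of $u$ at places $w\neq v$, while $\sigma_v$ factors through $k_{2,v}^\times$ on which $\Pi_v$-conjugation is the $q_v$-th power map, the twisted representation $u\mapsto \sigma(\Pi_v^{-1}u\Pi_v)$ is exactly $\sigma'$. The central character condition $f'(gz)=\psi(z)f'(g)$ is immediate since $z\in(\A_F^f)^\times$ commutes with $\Pi_v$, so $f\mapsto f'$ gives an $\bigO$-linear isomorphism $S_{\sigma,\psi}(U,\bigO)\isoto S_{\sigma',\psi}(U,\bigO)$, with inverse $f\mapsto (g\mapsto f(g\Pi_v^{-1}))$.

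Next I would check Hecke-equivariance: for any place $w\notin S$ (in particular $w\neq v$), the Hecke operators $T_w$ and $S_w$ are given by summing the left action of elements supported at $w$, all of which commute with $\Pi_v$. Hence the map $f\mapsto f'$ intertwines the actions of $\mathbb{T}^{\univ}_{S,\bigO}$, so it induces an isomorphism on localisations at any maximal ideal $\mathfrak{m}$. In particular $S_{\sigma,\psi}(U,\bigO)_\mathfrak{m}\neq 0$ if and only if $S_{\sigma',\psi}(U,\bigO)_\mathfrak{m}\neq 0$, and the Galois representation $\rhobar_\mathfrak{m}$ is the same on both sides. Taking $\mathfrak{m}$ to be an ideal witnessing modularity of $\rhobar$ of weight $\sigma$ (respectively $\sigma'$) finishes the proof.

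The only real point to verify is that conjugation by $\Pi_v$ stabilises $U_v = \bigO_{D,v}^\times$ and acts on the image $k_{2,v}^\times$ via $q_v$-th power, which is exactly the content of the structural remarks immediately preceding the lemma; once this is in hand, the rest of the argument is formal bookkeeping with the definition of $S_{\sigma,\psi}(U,\bigO)$.
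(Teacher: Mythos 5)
Your proposal is correct and is essentially the paper's own argument: the paper likewise defines $(\theta f)(x)=f(x\Pi)$ with $\Pi$ equal to the normalising uniformiser $\Pi_v$ at $v$ and trivial elsewhere, checks the $U$-transformation property via $\sigma(\Pi^{-1}u\Pi)=\sigma'(u)$, and notes Hecke-equivariance. The only cosmetic difference is that you work with $\bigO$-coefficients while the paper states the bijection for $S_{\sigma,\psi}(U,\F)$, which makes no difference since $\sigma$ is an $\F$-representation viewed as an $\bigO$-module.
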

\begin{proof}
  It suffices to exhibit a bijection \[\theta:S_{\sigma,\psi}(U,\F)\to
  S_{\sigma',\psi}(U,\F)\]which commutes with the action of
  $\mathbb{T}_{S,\bigO}^{\univ}$. Let $\Pi\in (D\otimes_F\A^{f})^\times$ be
  trivial away from $v$, and equal to $\Pi_v$ at $v$, where $\Pi_v$ is
  as in the previous paragraph. Then we define $\theta$ by \[(\theta
  f)(x):=f(x\Pi).\]It is straightforward to check that this map has
  the required properties; the key point is that if $u\in U$, then \begin{align*}(\theta f)(xu)&=f(xu\Pi)\\
    &= f(x\Pi (\Pi^{-1}u\Pi))\\ &=
    \sigma(\Pi^{-1}u\Pi)^{-1}f(x\Pi)\\&= \sigma'(u)^{-1}f(x\Pi)\\&=
    \sigma'(u)^{-1}(\theta f)(x).\end{align*}
\end{proof}

\section{Weights are controlled by lifts of tame type}\label{sec:tame
  lifts}  Continue to let $v$ be a place of $F$ that divides $p$.  We distinguish two types of
irreducible $\F$-representations $\sigma_v$ of $U_v$. Recall that any
such representation is $1$-dimensional, and factors through
$k_{2,v}^\times$, with $k_{2,v}$ a quadratic extension of $k_v$.

\begin{defn}
  We say that $\sigma_v$ is of \emph{type~I} if it does not factor through
  the norm $k_{2,v}^\times\to k_v^\times$. Otherwise, we say that it
  is of \emph{type~II}.
\end{defn}

We now recall some facts about the local Langlands and local
Jacquet-Langlands correspondences. Let $K$ be a finite extension of
$\Qp$, let $L$ be an unramified quadratic extension of $K$, and let $D$
be a nonsplit quaternion algebra over $K$. Consider $L$ as a subfield
of $D$. Let $k$ be the residue field of $K$, of cardinality $q$. If $\pi$ is an irreducible
admissible ($\C$- or $\Qpbar$-valued) representation of $D^\times$, we let $\JL(\pi)$ be the
corresponding representation of $\GL_2(K)$. If $\pi$ is an irreducible
admissible representation of $\GL_2(K)$, we let $\LL(\pi)$ denote the
corresponding representation of the Weil group $W_K$ of $K$. Let
$N_D:D^\times\to K^\times$ be the reduced norm. As usual we identify
characters of $L^\times$ or $K^\times$ with characters of the
corresponding Weil groups via local class field theory. If $\chi$ is a
character of $L^\times$ which does not factor through the norm to
$K^\times$, we denote the corresponding supercuspidal representation
of $\GL_2(K)$ by $W_\chi$.

\begin{itemize}
\item If $\chi$ is a character of $K^\times$, then
  $\JL(\chi\circ N_D)=(\chi\circ\det)\otimes \St$, where $\St$ is the Steinberg representation.
\item Suppose that $\chi$ is a character of $L^\times$ of
  conductor $1$. Then
  $\LL(\pi)|_{I_K}\cong\chi|_{I_K}\oplus\chi|_{I_K}^q$ if and only if $\pi=W_{\chi'}$ for some unramified twist
  $\chi'$ of $\chi$. [See section A.3.2 of \cite{bm}.]
\item If $\chi$ is a character of $L^\times$ of
  conductor $1$, then $\JL^{-1}(W_\chi)$ is 2-dimensional. Furthermore,
  $\JL^{-1}(W_\chi)|_{\bigO_L^\times}\cong\chi|_{\bigO_L^\times}\oplus\chi|_{\bigO_L^\times}^q$. [See
  section 7 of \cite{MR1059954}.]
\end{itemize}

We now recall some definitions relating to potentially semistable
lifts of particular type. We use the conventions of \cite{sav04}.

\begin{defn}\label{defn:barsotti tate lifts} Let $\tau_v$ be an inertial
  type. We say that a lift $\rho:G_{F_v}\to\GL_2(\Qpbar)$ of $\rhobar|_{G_{F_{v}}}$ is
  \emph{parallel potentially Barsotti-Tate} (respectively \emph{parallel potentially semistable}) of type $\tau_v$ if $\rho$ is potentially
  Barsotti-Tate (respectively potentially semistable with all Hodge-Tate
  weights equal to 0 or 1), has determinant a finite order character of order
  prime to $p$ times the cyclotomic character, and the corresponding
  Weil-Deligne representation, when restricted to $I_{F_{v}}$, is
  isomorphic to $\tau_v$. \end{defn}
Note that for a two-dimensional de Rham representation with all
Hodge-Tate weights equal to $0$ or $1$, the condition that all pairs of labeled
Hodge-Tate weights are $\{0,1\}$ is equivalent to the condition that 
the determinant is the product of the cyclotomic character, a finite
order character, and an unramified character; the condition of being
parallel is slightly stronger than this.

If $\sigma_v$ is an irreducible $\F$-representation of $U_v$, we will consider the
inertial type of $I_{F_v}$ given by
$\widetilde{\sigma}_v\oplus\widetilde{\sigma}_v^{q_v}$, where a tilde denotes
a Teichm\"{u}ller lift (considered as a
representation of $I_{F_v}$ via local class field theory). 

% The following result follows from Lemmas \ref{432} and \ref{lem: set of weights is preserved by local Frobenius}, the
% Jacquet-Langlands correspondence, the remarks made above about the
% local Langlands and Jacquet-Langlands correspondences, and the compatibility of the local
% and global Langlands correspondences at places dividing $p$ (see
% \cite{kis06}).

\begin{lem}\label{lem:typesversusweights}  $\rhobar$ is modular of weight
  $\sigma=\otimes_{v|p}\sigma_v$ if and only if $\rhobar$ lifts to a
  modular Galois representation $\rho:G_F\to\GL_2(\Qpbar)$ which for all places $v|p$ is parallel
  potentially Barsotti-Tate of
  type $\widetilde{\sigma}_v\oplus\widetilde{\sigma}_v^{q_v}$ at $v$ if $\sigma_v$ is of type~I, and is parallel potentially
  semistable of type $\widetilde{\sigma}_v\oplus\widetilde{\sigma}_v^{q_v}$ at $v$, but not potentially crystalline, if
  $\sigma_v$ is of type~II.

\end{lem}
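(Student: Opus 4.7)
The plan is to use Lemma~\ref{432} to pass between $\sigma$ and its Teichm\"uller lift $\widetilde\sigma=\otimes_{v|p}\widetilde\sigma_v$, and then to read off the local type of the associated Galois representation at each place $v|p$ from the Jacquet--Langlands and local Langlands recipes recalled above.

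For the forward direction, suppose $\rhobar$ is modular of weight $\sigma$ via data $D,S,U,\psi,\m$. Choose a finite-order lift $\psi_0$ of $\overline\psi$ of order prime to $p$, arranged to be compatible with $\widetilde\sigma$ on local units. Since $\widetilde\sigma\otimes_\bigO\F=\sigma$ and the latter is irreducible, Lemma~\ref{432} yields $S_{\widetilde\sigma,\psi_0}(U,\bigO)_\m\neq 0$. Inverting $p$, one extracts an irreducible automorphic representation $\pi$ of $(D\otimes_F\A_F^f)^\times$ with central character $\psi_0$, whose Hecke eigenvalues away from $S$ are recorded by $\m$, and for which $\pi_v|_{U_v}$ contains $\widetilde\sigma_v$ for each $v|p$. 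Transferring via the global Jacquet--Langlands correspondence to a Hilbert modular form on $\GL_2(\A_F)$ and attaching a Galois representation in the usual way, one obtains a modular lift $\rho\colon G_F\to\GL_2(\Qpbar)$ of $\rhobar$ whose determinant is the cyclotomic character times a finite-order character of order prime to $p$.

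The key step is to identify the inertial type at each $v|p$. By local-global compatibility, the Frobenius-semisimplified Weil--Deligne representation attached to $\rho|_{G_{F_v}}$ is $\LL(\JL(\pi_v))$. In the type~I case $\widetilde\sigma_v$ is a character of $\bigO_L^\times$ extending to a conductor-$1$ character of $L^\times=K_{2,v}^\times$ not factoring through the norm, so the third bullet above forces $\JL(\pi_v)=W_{\chi'}$ for some unramified twist $\chi'$ of $\widetilde\sigma_v$, and then the second bullet gives $\LL(\JL(\pi_v))|_{I_{F_v}}\cong\widetilde\sigma_v\oplus\widetilde\sigma_v^{q_v}$; since a supercuspidal has trivial monodromy, $\rho|_{G_{F_v}}$ is potentially Barsotti--Tate of the required type. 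In the type~II case $\widetilde\sigma_v=\chi\circ N_D$ for a finite-order character $\chi$, so by the first bullet $\JL(\pi_v)=(\chi\circ\det)\otimes\St$; this twist of Steinberg produces a potentially semistable but not potentially crystalline $\rho|_{G_{F_v}}$, with inertial type $\chi|_{I_{F_v}}\oplus\chi|_{I_{F_v}}=\widetilde\sigma_v\oplus\widetilde\sigma_v^{q_v}$ (the last equality because $\widetilde\sigma_v$ factors through the norm). The parallel shape of the Hodge--Tate weights in both cases comes from the fact that each $\sigma_v$ is a character with no algebraic part, i.e.~the associated archimedean weight is parallel weight two.

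Conversely, given a modular lift $\rho$ of $\rhobar$ with the prescribed local types at every $v|p$, the associated automorphic representation $\pi$ has local components whose Jacquet--Langlands transfers have Weil--Deligne representations with the given inertial types; reading the bullets in reverse shows $\pi_v|_{U_v}\supset\widetilde\sigma_v$ at each $v|p$. This gives a nonzero contribution to $S_{\widetilde\sigma,\psi_0}(U,\bigO)_\m$, and the second half of Lemma~\ref{432} then produces a nonzero contribution to $S_{\sigma'',\overline{\psi_0}}(U,\F)_\m$ for some irreducible $U_p$-subquotient $\sigma''$ of $\widetilde\sigma\otimes\F=\sigma$; irreducibility of $\sigma$ forces $\sigma''=\sigma$, so $\rhobar$ is modular of weight $\sigma$. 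The main technical nuisance is the bookkeeping of central characters and determinants, so that the hypotheses of Lemma~\ref{432} and the determinant condition of Definition~\ref{defn:barsotti tate lifts} are simultaneously satisfied; the local analysis itself is a routine application of the Langlands and Jacquet--Langlands recipes.
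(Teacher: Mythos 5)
Your proposal is correct and follows essentially the same route as the paper: Lemma~\ref{432} to move between the mod~$p$ weight and a characteristic-zero $U_p$-representation, the dictionary with automorphic representations of $D^\times$, Jacquet--Langlands plus the explicit local Langlands facts for level-zero supercuspidals and twists of Steinberg, and local-global compatibility at $p$. The one small divergence is that at type~I places you use the Teichm\"uller lift $\widetilde{\sigma}_v$ itself as the integral $U_p$-representation rather than $(\JL^{-1}(W_{\widetilde{\sigma}_v}))|_{\bigO_{D,v}^\times}$ as the paper does; this is equally valid (one still has to pin down $\pi_v$ as level zero, just as in the paper's bracketed conductor argument), and since $\widetilde{\sigma}\otimes\F=\sigma$ is irreducible it lets you conclude the converse directly without invoking Lemma~\ref{lem: set of weights is preserved by local Frobenius}.
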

\begin{proof}
  We first tackle the only if direction. If $\sigma_v$ is of type~I
  then we choose an arbitrary extension of $\widetilde{\sigma}_v$ to a character of
  $F_{v,2}^\times$, where $F_{v,2}$ is the unramified quadratic
  extension of $F_v$, and if  $\sigma_v$ is of type~II
  then we choose an arbitrary extension of $\widetilde{\sigma}_v$ to a character of
  $F_{v}^\times$. We continue to denote these extensions by $\widetilde{\sigma}_v$. We apply Lemma \ref{432},
  with $\m$ and $\overline{\psi}$ chosen such that $\rhobar_\m\cong\rhobar$ and
  $S_{\sigma,\overline{\psi}}(U,\F)_\m\neq 0$ (such a
  maximal ideal exists by the assumption that $\rhobar$ is modular of
  weight $\sigma$),
  and \[\sigma'=\otimes_{v|p}\sigma'_v\] where
  \[\sigma'_v=(\JL^{-1}(W_{\widetilde{\sigma}_v}))|_{\bigO_{D,v}^\times}\]
  if $\sigma_v$ is of type~I, and \[\sigma'_v=\widetilde{\sigma}_v\]if
  $\sigma_v$ is of type II. We take the $\psi$ of Lemma
  \ref{432} to be the Teichm\"uller lift of $\overline{\psi}$. The correspondence between the algebraic modular
  forms considered in section \ref{sec: notation} and automorphic
  representations of $D^\times$ is explained in section 3.1.14 of
  \cite{kis04} (there is a running assumption in that paper that $D$
  is split at all places dividing $p$, but it is not needed in this
  discussion, and if one sets the representation
  $W_{\tau^{\operatorname{alg}}}$ of \emph{loc. cit.} to be the
  trivial representation the discussion goes through immediately in
  our case), and we see that after choosing an isomorphism $\Qpbar\isoto\C$ there is an automorphic representation
  $\pi$ of $D^\times$ whose weight is the trivial representation,
  whose Hecke polynomials at unramified places lift the characteristic
  polynomials of the corresponding Frobenius elements for $\rhobar$,
  and such that for each place $v|p$, $\pi_v$ is either
  $\JL^{-1}(W_\chi)$ for $\chi$ an unramified twist of
  $\widetilde{\sigma}_v$ if $\sigma_v$ is of type~I, or an unramified
  twist of $\widetilde{\sigma}_v$ if $\sigma_v$ is of type~II. [To see
  this in the case that $\sigma_v$ has type~I, note that if
  $\pi_v|_{\bigO_{D,v}^\times}$ contains
  $(\JL^{-1}(W_{\widetilde{\sigma}_v}))|_{\bigO_{D,v}^\times}$, then
  the conductor of $\pi_v$ is at most the conductor of
  $(\JL^{-1}(W_{\widetilde{\sigma}_v}))|_{\bigO_{D,v}^\times}$. By the
  results recalled in section 7 of \cite{MR1059954}, we see that
  $\pi_v$ must be of the form $W_\chi$ for $\chi$ a character of
  $F_{2,v}^\times$ of conductor $1$. Since (for example by the
  character formulae in section 7 of \cite{MR1059954})
  $W_{\widetilde{\sigma}_v}\cong W_{\widetilde{\sigma}_v^{q_v}}$, the
  result follows from the third bullet point above.]

  Applying the Jacquet-Langlands correspondence (Theorem 16.1 of
  \cite{MR0401654}) we see that there is an automorphic representation
  $\pi'$ of $\GL_2(\A_F)$ with the same infinitesimal character as the
  trivial representation, whose Hecke polynomials at unramified places
  lift the characteristic polynomials of the corresponding Frobenius
  elements for $\rhobar$, and such that for each place $v|p$, $\pi_v$
  is either $W_\chi$ for $\chi$ an unramified twist of
  $\widetilde{\sigma}_v$ if $\sigma_v$ is of type~I, or an unramified
  twist of $(\widetilde{\sigma}_v\circ\det)\otimes \St$ if $\sigma_v$
  is of type~II. The compatibility of the local
and global Langlands correspondences at places dividing $p$ (see
\cite{kis06}), and the results on the form of the local Langlands
correspondence recalled above, show that the Galois representation
corresponding to $\pi'$ gives a representation of the required form
(note that the Galois representation has determinant
$\psi\epsilon$, so is indeed parallel).

For the converse, we may reverse the above argument, and we see that
Lemma \ref{432} guarantees that $\rhobar$ is modular of a weight
$\mu=\otimes_{v|p}\mu_{v}$, where for each $v|p$ if $\sigma_v$ is of type~II
then $\mu_{v}=\sigma_v$, and if $\sigma_v$ is of type~I then
$\mu_{v}=\sigma_v$ or $\sigma_v^{q_v}$. The result then follows from
Lemma \ref{lem: set of weights is preserved by local Frobenius}.
\end{proof}

This motivates the following definition of $W^?(\rhobar)$. 

\begin{defn}\label{defn: the set of conjectural weights}
  For each place $v|p$, let $W^?(\rhobar|_{G_{F_v}})$ denote the set of
  $\sigma_v$ of type~I such that $\rhobar|_{G_{F_v}}$ has a
  parallel potentially Barsotti-Tate lift of type $\widetilde{\sigma}_v\oplus\widetilde{\sigma}_v^{q_v}$, together with the
  set of $\sigma_v$ of type~II such that $\rhobar|_{G_{F_v}}$ has a parallel
  potentially semistable lift of type $\widetilde{\sigma}_v\oplus\widetilde{\sigma}_v^{q_v}$ which is not
  potentially crystalline. Let $W^?(\rhobar)$ be the set of weights
  $\sigma=\otimes_{v|p}\sigma_v$ with $\sigma_v\in
  W^?(\rhobar|_{G_{F_v}})$ for all $v|p$.
\end{defn}

Note that by Lemma \ref{lem:typesversusweights} we have
$W(\rhobar)\subset W^?(\rhobar)$. We will
prove under a mild hypothesis that $W(\rhobar)=W^?(\rhobar)$ in
section \ref{main results}. In the intervening sections we will give
an explicit description of $W^?(\rhobar)$ in the case that
$\rhobar|_{G_{F_v}}$ is semisimple for each $v|p$. It is already
possible to see that weights of type~II are rather rare.

\begin{lem}\label{lem:weight of type II implies cyclo}If $\rhobar$ is
  modular of weight $\sigma=\otimes_{v|p}\sigma_v$, and $\sigma_v$ is
  of type~II,
  then \[\rhobar|_{I_{F_v}}\cong\sigma_v\begin{pmatrix}\epsilon
    & *\\ 0 & 1\end{pmatrix}\]where $\sigma_v$ is regarded as a character of
  $I_{F_v}$ via local class field theory, and $\epsilon$ is the
  cyclotomic character.
  
\end{lem}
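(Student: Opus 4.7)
The plan is to apply Lemma \ref{lem:typesversusweights} to produce a characteristic-zero lift of $\rhobar$ with a specific local behaviour at $v$, and then to use the classification of 2-dimensional potentially semistable, non-crystalline representations with Hodge--Tate weights $\{0,1\}$ to pin down the shape of the lift.

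First I would apply Lemma \ref{lem:typesversusweights}: since $\rhobar$ is modular of weight $\sigma$, there exists a modular lift $\rho : G_F \to \GL_2(\Qpbar)$ of $\rhobar$ such that $\rho_v := \rho|_{G_{F_v}}$ is parallel potentially semistable of inertial type $\widetilde{\sigma}_v \oplus \widetilde{\sigma}_v^{q_v}$ at $v$ and is not potentially crystalline. Type~II for $\sigma_v$ means that it factors through the norm $k_{2,v}^\times \to k_v^\times$, from which one computes directly that $\widetilde{\sigma}_v^{q_v} = \widetilde{\sigma}_v$ as characters of $I_{F_v}$, so the inertial type is the scalar $\widetilde{\sigma}_v \oplus \widetilde{\sigma}_v$.

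The core of the argument is then the analysis of the Weil--Deligne representation attached to $\rho_v$. Since its restriction to inertia is the scalar $\widetilde{\sigma}_v$ and its monodromy operator is non-zero (because $\rho_v$ is not potentially crystalline), the 2-dimensional WD representation must be of special type, i.e.\ of the form $\chi_v \otimes \St$ for some character $\chi_v$ of $W_{F_v}$ extending $\widetilde{\sigma}_v|_{I_{F_v}}$. Combining this with the constraint that the Hodge--Tate weights of $\rho_v$ are $\{0,1\}$ and analyzing the weakly admissible filtered $(\phi, N)$-module attached to $\chi_v^{-1} \rho_v$---which must be a non-split extension of the trivial character by the cyclotomic character, in direct analogy with the Tate module of a Tate curve---I would conclude that
\[ \rho_v \cong \begin{pmatrix} \chi_v \epsilon & * \\ 0 & \chi_v \end{pmatrix}. \]

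Finally I would choose a $G_{F_v}$-stable $\bigO$-lattice in $\rho_v$, reduce modulo $p$, and restrict to $I_{F_v}$; since $\chi_v|_{I_{F_v}} = \widetilde{\sigma}_v$ has Teichm\"uller reduction equal to $\sigma_v$, this yields the desired shape. The hardest step is the middle one: verifying that the cyclotomic character appears as the sub- and not the quotient-representation. This ultimately comes down to reading off the Hodge filtration on $D_{\st}(\chi_v^{-1} \rho_v)$ from the shape of its underlying $(\phi, N)$-module, which is a standard calculation for semistable non-crystalline representations of dimension $2$ with Hodge--Tate weights $\{0,1\}$.
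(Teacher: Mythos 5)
Your proposal is correct and follows essentially the same route as the paper: apply Lemma \ref{lem:typesversusweights} to obtain a lift that at $v$ is parallel potentially semistable of type $\widetilde{\sigma}_v\oplus\widetilde{\sigma}_v^{q_v}$ but not potentially crystalline, and then invoke the standard fact that $2$-dimensional semistable non-crystalline representations with labeled Hodge--Tate weights $\{0,1\}$ are unramified twists of an extension of the trivial character by the cyclotomic character. Your middle steps (scalar type for type~II, special Weil--Deligne representation, filtered $(\phi,N)$-module analysis) simply unpack the ``well-known fact'' that the paper cites without proof.
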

\begin{proof}
  This follows from Lemma \ref{lem:typesversusweights}, and the
  well-known fact that 2-dimensional semistable non-crystalline
  $p$-adic representations with all pairs of labeled Hodge-Tate
  weights equal to $\{0,1\}$ are unramified twists of an extension of
  the trivial character by the cyclotomic character.
\end{proof}
\section{Necessary conditions}\label{sec: necessity}

\subsection{Breuil modules with descent data}
\label{sec:breuil-modules-with}

Let $k$ be a
finite extension of $\F_p$, define $K_0=W(k)[1/p]$, and let $K$ be a
finite totally ramified extension of $K_0$ of degree~$e'$.  Suppose
that $L$ is a subfield of $K$ containing $\Qp$ such
that $K/L$ is Galois and tamely ramified.  Assume further that there is a uniformiser $\pi$
of $\bigO_{K}$ such that $\pi^{e(K/L)}\in L$, where $e(K/L)$ is the
ramification
degree of $K/L$, and fix such a $\pi$.
Since $K/L$ is tamely ramified, the category of Breuil modules with
coefficients and descent data is easy to describe (see \cite{sav06}). Let $k_E$ be a finite extension of $\Fp$.
The category $\BrMod_{\dd,L}$
consists of quadruples $(\mathcal{M},\Fil^1 \mathcal{M},\phi_{1},\{\widehat{g}\})$ where:

\begin{itemize}\item $\mathcal{M}$ is a finitely generated
  $(k\otimes_{\F_p} k_E)[u]/u^{e'p}$-module, free over $k[u]/u^{e'p}$.
\item $\Fil^1 \M$ is a $(k\otimes_{\F_p} k_E)[u]/u^{e'p}$-submodule of $\M$ containing $u^{e'}\M$.
\item $\phi_{1}:\Fil^1\M\to\M$ is $k_E$-linear and $\phi$-semilinear
  (where $\phi:k[u]/u^{e'p}\to k[u]/u^{e'p}$ is the $p$-th power map)
  with image generating $\M$ as a $(k\otimes_{\F_p} k_E)[u]/u^{e'p}$-module.
\item $\widehat{g}:\M\to\M$ for each
  $g\in\Gal(K/L)$  are additive bijections that preserve $\Fil^1 \M$, commute with the $\phi_1$-,
  and $k_E$-actions, and satisfy $\widehat{g}_1\circ
  \widehat{g}_2=\widehat{g_1\circ g}_2$ for all
  $g_1,g_2\in\Gal(K/L)$. Furthermore
  $\widehat{1}$ is the identity, and  if $a\in k\otimes_{\F_{p}} k_E$, $m\in\M$ then $\widehat{g}(au^{i}m)=g(a)((g(\pi)/\pi)^{i}\otimes 1)u^{i}\widehat{g}(m)$.\end{itemize}

The category $\BrMod_{\dd,L}$ is equivalent to the category of finite flat
group schemes over $\mathcal{O}_K$ together with a $k_E$-action and descent
data on the generic fibre from $K$ to $L$ (this equivalence depends on $\pi$).

We choose in this paper to adopt the conventions of \cite{bm} and
\cite{sav04}, rather than those of \cite{bcdt}; thus rather than working
with the usual contravariant equivalence of categories, we work with a
covariant version of it, so that our formulae for generic fibres will differ
by duality and a twist from those following the conventions of \cite{bcdt}.
To be precise, we obtain the associated $G_{L}$-representation (which we will refer to as the generic fibre) of an object of $\BrMod_{\dd,L}$ via
the functor $T_{\st,2}^{L}$.

Let $E$ be a finite extension of $\Qp$ with integers $\OO_E$, maximal
ideal $\m_E$, and residue field $k_E$.  Recall
from \cite[Sec. 2]{sav04} that the functor $D_{\st,2}^{K}$ is an equivalence of categories
  between the category of $E$-representations of $G_L$ which are semistable when
  restricted to $G_K$ and have Hodge-Tate weights in $\{0,1\}$, and
  the category of weakly admissible filtered $(\phi,N)$-modules $D$ with
  descent data and $E$-coefficients such that $\Fil^0 (K \otimes_{K_0}
  D) = K \otimes_{K_0} D$ and $\Fil^2 (K \otimes_{K_0}
  D) = 0$.

Suppose that $\rho$ is a representation
  in the source of $D_{\st,2}^{K}$.  Write $S = S_{K,\OO_E}$ (notation
  and terminology
in this paragraph are as in
  \cite[Sec. 4]{sav04}).  Then $T_{\st,2}^{L}$ is an
  essentially surjective functor from strongly divisible modules $\M$ (with
  $\OO_E$-coefficients and descent data) in $S[1/p]
  \otimes_{K_0\otimes E}
  D_{\st,2}^{K}(\rho)$ to Galois-stable $\OO_E$-lattices in~$\rho$.  This
  functor is compatible with reduction mod $\m_E$, so that
applying $T_{\st,2}^{L}$ to the object $(k \otimes_{\Fp}
k_E)[u]/(u^{e'p}) \otimes_{S/\m_E S} (\M/\m_E \M)$ of $\BrMod_{\dd,L}$ yields a
reduction mod $p$ of $\rho$ (see \cite[Cor. 4.12, Prop 4.13]{sav04}).

To simplify notation, for the remainder of the paper we write simply
$\M/\m_E\M$ for the above reduction mod ~$\m_E$ of $\M$ in
$\BrMod_{\dd,L}$ (we will never mean the literal $S/\m_E S$-module).
Let $\ell$ be the residue field of $L$, and let $\ur_{\lambda}$ 
denote the unramified character of $G_L$ sending an
arithmetic Frobenius element to $\lambda$.  Define $\Nm_{\ell/\F_p,k_E} :
(\ell \otimes_{\Fp} k_E)^{\times} \rightarrow (\Fp \otimes_{\Fp} k_E)^{\times} \cong k_E^{\times}$ to
be
the norm map $x \mapsto \prod_{\beta \in \Gal(\ell/\Fp)} \beta(x)$,
with each $\beta$ acting trivially on $k_E$. 

The following lemma is a
 more precise version of  \cite[Lem. 5.2]{geesavitttotallyramified}.

\begin{lem}
  \label{lem:connected_breuil_module}
  Let $\overline{\chi} : \Gal(K/L) \rightarrow k_E^{\times}$ be a
  character, and for $c \in (\ell \otimes_{\Fp} k_E)^{\times}$  let
  $\M(\overline{\chi},c)$ denote the Breuil
  module with $k_E$-coefficients and descent data from $K$ to $L$ that
  is free of rank one with
generator $v$ and
$$ \Fil^1 \M(\overline{\chi},c) = \M(\overline{\chi},c),\qquad \phi_1(v) =
cv, \qquad \widehat{g}(v) = (1 \otimes \overline{\chi}(g))v$$
for $g \in \Gal(K/L)$.  Then $T_{\st,2}^{L}(\M(\overline{\chi},c)) =
\ur_{\lambda} \cdot
\overline{\chi}$, where $\lambda = \Nm_{\ell/\Fp,k_E}(c)^{-1}$.
\end{lem}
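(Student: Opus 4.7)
The plan is to lift $\M(\overline{\chi},c)$ to characteristic zero and read the character off the resulting filtered $(\phi,N)$-module, using the compatibility of $T_{\st,2}^{L}$ with reduction mod $\m_E$ recalled in the discussion preceding the lemma.

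First I would choose a Teichm\"uller-style lift $\widetilde{c} \in (W(\ell) \otimes_{\Z_p} \OO_E)^\times$ of $c$, and a character $\widetilde{\chi} : \Gal(K/L) \to \OO_E^\times$ lifting $\overline{\chi}$, and construct the rank-one strongly divisible module $\widehat{\M}$ with $\OO_E$-coefficients and descent data from $K$ to $L$, free on a generator $\widetilde{v}$ with $\Fil^1 \widehat{\M} = \widehat{\M}$, $\phi_1(\widetilde{v}) = \widetilde{c}\,\widetilde{v}$ and $\widehat{g}(\widetilde{v}) = (1 \otimes \widetilde{\chi}(g))\widetilde{v}$. One checks directly from the axioms that $\widehat{\M}$ is a strongly divisible module lifting $\M(\overline{\chi},c)$; essential surjectivity of $T_{\st,2}^{L}$ is not needed here because we are writing the lift down by hand.

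Next I would identify the corresponding weakly admissible filtered $(\phi,N)$-module $D$ with descent data. It is free of rank one over $K_0 \otimes_{\Q_p} E$ on the image of $\widetilde{v}$, has $N=0$, trivial filtration except $\Fil^1 (K\otimes_{K_0} D) = K\otimes_{K_0} D$ and $\Fil^2 = 0$, Frobenius $\phi$ acting by $\widetilde{c}$ (up to the shift by $p$ coming from the normalisation of $\phi_1$ versus $\phi$), and descent data acting by $\widetilde{\chi}$. By the classification of rank-one weakly admissible modules the associated Galois character of $G_L$ is of the form $\widetilde{\chi} \cdot \ur_{\widetilde{\lambda}} \cdot \epsilon$ for some $\widetilde{\lambda} \in \OO_E^\times$, so reducing mod $\m_E$ gives a character of the form $\overline{\chi} \cdot \ur_\lambda$ (the mod $p$ cyclotomic character is absorbed by $\ur_\lambda$ after reduction). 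This recovers the weaker form of the statement already known from \cite[Lem.~5.2]{geesavitttotallyramified}.

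The heart of the lemma is then the computation of $\widetilde{\lambda}$, and hence $\lambda$, as a norm. For this I would decompose $K_0 \otimes_{\Q_p} E = \prod_\beta E_\beta$ along the embeddings $\beta : W(\ell) \hookrightarrow \OO_E$ (lifting the idempotent decomposition of $\ell \otimes_{\F_p} k_E$) and track how the crystalline Frobenius $\phi$ permutes the components while multiplying by the corresponding component of $\widetilde{c}$. The Frobenius eigenvalue on any one component of the isoclinic $\phi$-module is then precisely $\prod_\beta \beta(\widetilde{c})$, i.e.\ $\mathrm{N}_{\ell/\F_p,k_E}(\widetilde{c})$. Standard local class field theory together with our convention that uniformisers correspond to geometric Frobenius then forces $\widetilde{\lambda} = \mathrm{N}_{\ell/\F_p,k_E}(\widetilde{c})^{-1}$, whence $\lambda = \mathrm{N}_{\ell/\F_p,k_E}(c)^{-1}$ after reduction.

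The main obstacle is bookkeeping: the covariant-vs-contravariant convention for $T_{\st,2}^{L}$ adopted from \cite{bm,sav04}, the geometric-vs-arithmetic Frobenius normalisation, and the distinction between the Frobenius on $\M$ and on $D$ all conspire to produce the inverse in $\lambda = \mathrm{N}_{\ell/\F_p,k_E}(c)^{-1}$, and it is these sign/inverse choices (rather than any deep input) that require care.
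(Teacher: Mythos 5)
Your overall strategy --- lift $\M(\overline{\chi},c)$ by hand to a rank-one strongly divisible module, pass to the associated filtered module $D$, and read off the character --- is the same as the paper's, but two of your middle steps are wrong or missing, and they are exactly the steps the lemma is about. The claim that the unreduced character is $\widetilde{\chi}\cdot\ur_{\widetilde{\lambda}}\cdot\epsilon$ and that after reduction ``the mod $p$ cyclotomic character is absorbed by $\ur_{\lambda}$'' cannot stand: $\epsilon$ mod $p$ restricted to $G_L$ is ramified unless $p-1$ divides $e(L/\Qp)$ (in the paper's application $L/\Qp$ is unramified, so it is certainly ramified), hence it can never be swallowed by an unramified twist, and if the unreduced character really carried a factor of $\epsilon$ the lemma itself would be false. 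Under the paper's normalisation there is no cyclotomic factor at all: the relevant $D$ has $\phi(\mathbf{v})=p\widetilde{c}\,\mathbf{v}$ and $\Fil^1 D_K=D_K$, so $V_{\st,2}^{L}(D)=(B_{\st}\otimes_{K_0}D)^{\phi=p}_{N=0}\cap\Fil^1(B_{\mathrm{dR}}\otimes_K D_K)$ is spanned by $\alpha\mathbf{v}$ with $\alpha\in(\Fil^0 B_{\mathrm{cris}})\otimes_{\Qp}E$, and the resulting character is potentially unramified. Your appeal to ``the classification of rank-one weakly admissible modules'' is precisely the point where this normalisation must be pinned down, and as written you pin it down incorrectly.

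Second, the identity $\widetilde{\lambda}=\Nm_{\ell/\Fp,k_E}(\widetilde{c})^{-1}$ is the whole content of the lemma beyond \cite[Lem.~5.2]{geesavitttotallyramified}, and you defer it to ``standard local class field theory together with our convention''; that is an assertion, not an argument, and it sits exactly where the arithmetic/geometric Frobenius and eigenvalue/inverse choices can go astray (as the $\epsilon$ slip already shows). The paper closes this gap with a small but essential device: choose the lift $\widetilde{c}$ to have finite multiplicative order. From $\phi(\alpha)\widetilde{c}=\alpha$ one gets $\phi^{(f)}(\alpha)=\Nm_{L_0/\Qp,E}(\widetilde{c})^{-1}\alpha$ (with $L_0=W(\ell)[1/p]$), and since some $\phi^{(m)}$ then fixes $\alpha$, the period $\alpha$ lies in $\Q_{p^m}\otimes_{\Qp}E$, where crystalline Frobenius coincides with arithmetic Frobenius; the action of any lift of a power of arithmetic Frobenius on the generator $\alpha\mathbf{v}$ can then be computed directly, yielding $\ur_{\widetilde{\lambda}}\cdot\chi$ with no black-box classification and no ambiguity about the inverse. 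Some argument of this kind (or a citation whose normalisation you verify against the functor $T_{\st,2}^{L}$ actually used here) is needed to make your sketch into a proof.
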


\begin{proof} This statement is exactly the same as
  \cite[Lem. 5.2]{geesavitttotallyramified}, except that here we determine the
  unramified character multiplying $\overline{\chi}$.  We return ourselves to the proof of that
  statement, and in particular we re-adopt the notation from that proof, so that $\chi$
  is the Teichm\"uller lift of $\overline{\chi}$, the element $\widetilde{c} \in (W(\ell) \otimes_{\Zp}
  \OO_E)^{\times}$ is a lift of $c$, and $D:=D(\chi,\widetilde{c})$ is a
filtered $\phi$-module of rank one over $K_0 \otimes_{\Qp} E$ with descent
data $\chi$ and generator
$\mathbf{v}$
such that $\phi(\mathbf{v}) = p \widetilde{c} \mathbf{v}$; moreover the
filtration on $D_K = K \otimes_{K_0} D$ vanishes in degree $2$.  It is
possible to choose
$\widetilde{c}$ to be an element of finite multiplicative
order, and we do so.  

The representation $V_{\st,2}^{L}(D)$ giving rise to $D$ is equal to
$(B_{\st} \otimes_{K_0} D)^{\phi=p}_{N=0} \cap \Fil^1(B_{\mathrm{dR}}
\otimes_K D_K)$ (see the definition after
\cite[Cor. 2.10]{sav04}), and so is generated by some $\alpha \mathbf{v}$
with $\alpha \in (\Fil^0 B_{\mathrm{cris}}) \otimes_{\Qp} E$; then $p\alpha \mathbf{v} = \phi(\alpha
\mathbf{v}) = \phi(\alpha) \widetilde{c} p \mathbf{v}$, so that
$\phi(\alpha) \widetilde{c} = \alpha$.  If $f = [\ell : \Fp]$ it
follows that $\phi^{(f)}(\alpha) = \Nm_{L_0/\Qp,E}(\widetilde{c})^{-1}
\alpha$, where $L_0 = W(\ell)[1/p]$ and $\Nm_{L_0/\Qp,E}$ is defined
via the obvious analogy with $\Nm_{\ell/\Fp,k_E}$.   Set
$\widetilde{\lambda} = \Nm_{L_0/\Qp,E}(\widetilde{c})^{-1}$.

Since $\widetilde{c}$ has
finite order we have $\phi^{(m)}(\alpha) = \alpha$ for some $m > 0$,
and therefore $\alpha$ is an element of $(\Fil^0 B_{\mathrm{cris}})^{\phi^{(m)} = 1}
\otimes_{\Qp} E = \Q_{p^m} \otimes_{\Qp} E$.  In particular,
the action of crystalline Frobenius coincides with the
action of an arithmetic Frobenius on $\alpha$.
As a result, if $g \in G_L$ is a lift of an $n$th power of arithmetic
Frobenius with $n \in \Z$ then $g(\alpha \mathbf{v}) =
\phi^{(nf)}(\alpha) \chi(g) \mathbf{v} = (\ur_{\widetilde{\lambda}}
\cdot \chi)(g) \alpha \mathbf{v}$.
Since $\widetilde{\lambda}$ lifts $\lambda$, the result follows by continuity.
\end{proof}

For the remainder of this paper we make the hypothesis that $k_E$ is
sufficiently large as to contain an embedding of $k$.  Let $\sigma_0$
be a fixed choice of embedding $k \hookrightarrow k_E$ and recursively
define $\sigma_{i+1}^p = \sigma_i$.  If $M$ is any $(k \otimes_{\Fp} k_E)$-module,
 we recall from \cite{sav06} that $M$ decomposes as 
a direct sum $M = \oplus_{i=0}^{d-1} M_{i}$, where $d=[k:\Fp]$ and
$M_{i}$ is the $k_E$-submodule on which multiplication by
$x\otimes 1$ for $x \in k$ is the same as multiplication by $1
\otimes \sigma_i(x)$.  In fact there is a collection of idempotents
$e_i \in k \otimes_{\Fp} k_E$ so that $M_{i} = e_i M$ and 
$\phi(e_i) = e_{i+1}$.  

Suppose now that $\M$ is an object of $\BrMod_{\dd,L}$.  Note that
$\phi_1$ maps $(\Fil^1 \M)_{i}$ into $\M_{i+1}$.  For $g
\in G_L$ let
$\overline{\eta}(g)$ be the image of $g(\pi)/\pi$ in (the $e(K/L)$th
roots of unity of) $k$.   The rank
one objects of $\BrMod_{\dd,L}$ are classified as follows.

\begin{prop}[\cite{sav06}, Theorem 3.5] \label{prop:rank one breuil modules} With our fixed choice of uniformiser
 $\pi$, every rank one object of $\BrMod_{\dd,L}$ with descent data
 relative to $L$ has the form:
\begin{itemize}
\item $\M = ((k \otimes_{\Fp} k_E)[u]/u^{e'p}) \cdot v $,
\item $(\Fil^1 \M)_{i} = u^{r_i} \M_{i}$,
\item $\phi_1(\sum_{i=0}^{d-1} u^{r_i} e_i v) = cv$ for some $c \in (\ell
  \otimes_{\Fp} k_E)^{\times}$, and
\item $\widehat{g}(v) = \sum_{i=0}^{d-1} (\overline{\eta}(g)^{k_i}
  \otimes 1) e_iv$ for all $g \in \Gal(K/L)$,
\end{itemize} 
where $0 \le r_i \le e'$ and $0 \le k_i < e(K/L)$ are sequences of
integers satisfying $k_i \equiv p(k_{i-1} + r_{i-1}) \pmod{e(K/L)}$;
furthermore the sequences $r_i, k_i$ are periodic with period dividing
$f = [\ell : \Fp]$.
\end{prop}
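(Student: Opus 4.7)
The plan is to unpack the structure of $\M$ via the idempotent decomposition $\M = \bigoplus_{i=0}^{d-1}\M_i$ using the $e_i \in k\otimes_{\F_p}k_E$, then determine the filtration, $\phi_1$, and descent data in turn.

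Since $\M$ is free of rank one over $(k\otimes_{\F_p}k_E)[u]/u^{e'p}$, any choice of generator $v$ makes each $\M_i = (k_E[u]/u^{e'p})\cdot e_i v$ free of rank one over $k_E[u]/u^{e'p}$. The $k_E[u]/u^{e'p}$-submodules of such a free rank-one module are linearly ordered and of the form $u^s\M_i$, so the requirement $u^{e'}\M\subseteq\Fil^1\M$ together with the submodule structure forces $(\Fil^1\M)_i = u^{r_i}\M_i$ with $0\le r_i\le e'$. The identity $\phi(e_i)=e_{i+1}$ combined with $\phi$-semilinearity sends $(\Fil^1\M)_i$ into $\M_{i+1}$, so $\phi_1(u^{r_i}e_iv) = \alpha_i e_{i+1}v$ for some $\alpha_i \in k_E[u]/u^{e'p}$; the image-generates axiom is equivalent to each $\alpha_i$ being a unit. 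Replacing $v$ by $\gamma v$ with $\gamma=\sum_i\gamma_ie_i$ a unit transforms $\alpha_i$ to $\gamma_i(u^p)\alpha_i\gamma_{i+1}(u)^{-1}$; solving these equations cyclically lets me reduce each $\alpha_i$ to a constant $c_i\in k_E^\times$, producing the element $c = (c_i)$ in the third bullet.

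For the descent data, the rule $\widehat{g}(au^jm)=g(a)((g(\pi)/\pi)^j\otimes 1)u^j\widehat{g}(m)$ shows that $\widehat{g}$ is determined by $\widehat{g}(v)$. For $g$ in the inertia subgroup of $\Gal(K/L)$ (which fixes $k$), $\widehat{g}$ preserves each $\M_i$, and after the above normalization of $\phi_1$, the preservation of $\Fil^1$ together with the scalar action on each component force $\widehat{g}(v) = \sum_i (\overline{\eta}(g)^{k_i}\otimes 1)e_iv$ for integers $0\le k_i<e(K/L)$. Comparing $\widehat{g}\circ\phi_1$ with $\phi_1\circ\widehat{g}$ on $u^{r_i}e_iv$ and matching the resulting powers of $\overline{\eta}(g)$ (which surjects onto the $e(K/L)$-th roots of unity in $k$) yields the recursion $k_{i+1}\equiv p(k_i+r_i)\pmod{e(K/L)}$.

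The periodicity with period dividing $f=[\ell:\F_p]$, together with the fact that $c$ lies in $(\ell\otimes_{\F_p}k_E)^\times$, comes from compatibility with those elements of $\Gal(K/L)$ that lift a generator of the quotient $\Gal(k/\ell)$: such elements identify components $\M_i$ and $\M_{i+f}$, because the embeddings $\sigma_i$ and $\sigma_{i+f}$ of $k$ into $k_E$ have the same restriction to $\ell$ (since $x^{p^f}=x$ for $x\in\ell$), and the descent axioms force the tuples $(r_i)$, $(k_i)$, and $(c_i)$ to be invariant under this shift. The main obstacle is the $\phi_1$-normalization step: one must check that the cyclic system of equations for the $\gamma_i(u)$ is consistent around the cycle and that the remaining basis-change freedom matches exactly the ambiguity in the tuple $(c_i)$, so that $c\in(\ell\otimes_{\F_p}k_E)^\times$ emerges as a genuine invariant of the isomorphism class. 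The other verifications—periodicity, the recursion on the $k_i$, and the compatibility axioms for $\widehat{g}$—are essentially bookkeeping once this normalization is in hand.
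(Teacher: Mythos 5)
You should note at the outset that the paper offers no proof of this proposition: it is quoted verbatim from \cite{sav06} (Theorem 3.5), so the comparison is with the argument there, whose skeleton yours matches (idempotent decomposition, normalisation of a generator, compatibility of the descent data with $\phi_1$, Frobenius lifts for periodicity). Your first two steps are sound: the ideals of $k_E[u]/u^{e'p}$ do give $(\Fil^1\M)_i=u^{r_i}\M_i$, the generation condition is equivalent to each $\alpha_i$ being a unit, and the cyclic consistency you flag for the $\phi_1$-normalisation is harmless: take $c_i=\alpha_i(0)$ and solve $\gamma_{i+1}(u)=c_i^{-1}\gamma_i(u^p)\alpha_i(u)$ around the cycle; the resulting fixed-point equation $\gamma_0(u)=U(u)\,\gamma_0(u^{p^d})$ with $U\equiv 1 \bmod u$ is solved by a finite product because $u$ is nilpotent.

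The genuine gaps are in the descent-data steps. First, your justification that the inertial action on $v$ is by the constants $\overline{\eta}(g)^{k_i}$ --- ``preservation of $\Fil^1$ together with the scalar action'' --- proves nothing: any unit multiple of $e_iv$ preserves $u^{r_i}\M_i$, so a priori $\widehat{g}(e_iv)=H_{g,i}(u)\,e_iv$ with $H_{g,i}$ an arbitrary unit power series. What forces constancy is commutation of $\widehat{g}$ with the normalised $\phi_1$: comparing $\widehat{g}\circ\phi_1$ and $\phi_1\circ\widehat{g}$ on $u^{r_i}e_iv$ gives $H_{g,i+1}(u)=\sigma_i(\overline{\eta}(g))^{r_i}H_{g,i}(u^p)$, and iterating (again using nilpotence of $u$) kills all positive-degree terms; one then needs tameness, $p\nmid e(K/L)$, so that inertia is cyclic of order prime to $p$ and $g\mapsto H_{g,i}(0)$ is a character recognisable as $\sigma_i\circ\overline{\eta}^{\,k_i}$. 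You never invoke tameness, and the recursion $k_{i+1}\equiv p(k_i+r_i)$ is exactly the constant-term content of the same relation, so your ``two'' steps are really one computation that you have not actually performed. Second, your claim that the descent axioms ``force'' $(c_i)$, $(k_i)$ to be invariant under $i\mapsto i+f$ is only partly right: $r_i=r_{i+f}$ is indeed forced (a Frobenius lift $\widehat{g}_0$ is a bijection preserving $\Fil^1\M$ and carrying $\M_i$ onto $\M_{i+f}$), and $k_i\equiv k_{i+f}$ follows from the group law once constancy is known; but for a generator normalised only as above, commuting $\widehat{g}_0$ with $\phi_1$ yields $c_i\,H_{g_0,i+f+1}=\sigma_{i+f+1}(\overline{\eta}(g_0))^{pr_i}H_{g_0,i+f}\,c_{i+f}$, which gives $c_i=c_{i+f}$ (equivalently $c\in(\ell\otimes_{\Fp}k_E)^{\times}$) only if one already knows $H_{g_0,j}=\sigma_j(\overline{\eta}(g_0))^{k_j}$, and conversely. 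Achieving both simultaneously requires a further change of generator by a constant unit $\sum_i\gamma_ie_i$ (which moves $c_i$ to $\gamma_ic_i\gamma_{i+1}^{-1}$ and $H_{g_0,i}$ to $\gamma_{i-f}\gamma_i^{-1}H_{g_0,i}$) together with a solvability check; this is precisely the consistency you defer as ``the main obstacle,'' and it is the substantive content of the statement rather than bookkeeping.
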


\begin{cor}\label{cor:chars for rank one breuil modules}
     In the above proposition, suppose that $e(K/L)$ is 
divisible by $p^f-1$.  Define $s_0 = p(p^{f-1} r_0 + \cdots +
r_{f-1})/(p^f-1)$ and $\lambda = \Nm_{\ell/\Fp,k_E}(c)^{-1}$.
Then $T_{\st,2}^{L}(\M) = (\sigma_0 \circ
\overline{\eta}^{k_0 + s_0}) \cdot \ur_{\lambda}$.
\end{cor}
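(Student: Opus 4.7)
The plan is to extend the proof of Lemma~\ref{lem:connected_breuil_module} to the general rank-one Breuil modules of Proposition~\ref{prop:rank one breuil modules}, where the filtration $\Fil^1\M_i = u^{r_i}\M_i$ is no longer trivial. The lemma produced the character $\ur_\lambda \cdot \overline{\chi}$ from the Frobenius-type condition on the generator of $V_{\st,2}^L(D)$; in the present setting the non-maximal filtration on $D_K$ should contribute an additional tame twist which will account for the $s_0$ term in the formula.

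Concretely, lift $\M$ to a strongly divisible module $\widetilde{\M}$ by essential surjectivity of the reduction functor (quoted from \cite{sav04} above), and thence to a rank-one filtered $(\phi,N)$-module $D = D_{\st,2}^{K}(\rho)$ over $K_0 \otimes_{\Qp} E$ with descent data. Choose a generator $\mathbf{v}$ of $D$ lifting $v$ so that $\phi(\mathbf{v}) = p\widetilde{c}\,\mathbf{v}$ with $\widetilde{c}$ a finite-order lift of $c$, and descent data $g(\mathbf{v}) = \widetilde{\chi}(g)\,\mathbf{v}$ where $\widetilde{\chi}$ is the Teichm\"uller lift of $g \mapsto \sigma_0(\overline{\eta}(g)^{k_0})$; the filtration on $D_K$ is then controlled componentwise by the $r_i$'s. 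Following the pattern of the lemma, $V_{\st,2}^L(D)$ is generated by some $\alpha \mathbf{v}$ with $\alpha \in (\Fil^0 B_{\mathrm{cris}}) \otimes_{\Qp} E$ satisfying $\phi(\alpha)\widetilde{c} = \alpha$ together with $\alpha \mathbf{v} \in \Fil^1(B_{\mathrm{dR}} \otimes_K D_K)$. Writing $\alpha = \beta\alpha'$, where $\beta \in B_{\mathrm{dR}}^{\times}$ captures the componentwise divisibility by lifts of $\pi^{r_i}$ forced by the filtration condition and $\alpha'$ is a finite-order element handled as in the lemma, reduces the problem to computing the inertia action on $\beta$ separately; the lemma's argument applied to $\alpha'$ accounts for the factor $\ur_{\lambda} \cdot \sigma_0 \circ \overline{\eta}^{k_0}$ after reducing mod $\m_E$.

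The main obstacle is then to show that inertia acts on $\beta$ via $\sigma_0 \circ \overline{\eta}^{s_0}$. This amounts to assembling the componentwise contributions of lifts of $\pi^{r_i}$ along the $\phi$-orbit of the $d = [k:\Fp]$ embeddings; the periodicity $r_{i+f} = r_i$ together with the divisibility hypothesis $p^f - 1 \mid e(K/L)$ are exactly what ensure that $s_0 = p(p^{f-1} r_0 + \cdots + r_{f-1})/(p^f - 1)$ is a non-negative integer and that the $\pi^{r_i}$-exponents in the $f$ successive components telescope to this $s_0$ under the action of inertia, which acts on the relevant $e(K/L)$-th root of $\pi^{e(K/L)}$ via the character $\overline{\eta}$. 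Combining the three contributions yields $T_{\st,2}^L(\M) = (\sigma_0 \circ \overline{\eta}^{k_0 + s_0}) \cdot \ur_{\lambda}$, as desired.
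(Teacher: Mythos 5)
There is a genuine gap, in two places. First, your opening move --- ``lift $\M$ to a strongly divisible module by essential surjectivity of the reduction functor'' --- is not justified by the result you quote: that statement says $T_{\st,2}^{L}$ is essentially surjective from strongly divisible modules onto Galois-stable lattices in a \emph{given} potentially Barsotti--Tate representation $\rho$; it does not say that an arbitrary torsion Breuil module with descent data lifts to a strongly divisible module. Producing such lifts is a real problem (it is the content of Section~\ref{sec:local lifts} of this paper, where it requires enlarging $E$ so that factorisations of the Eisenstein polynomial with factors of the prescribed degrees $r_i$ exist), and even granted a lift you would still have to match its reduction with $\M$ including the constant $c$, since the unramified part $\ur_{\lambda}$ with $\lambda = \Nm_{\ell/\Fp,k_E}(c)^{-1}$ is exactly what is at stake. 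Second, your transcription of the proof of Lemma~\ref{lem:connected_breuil_module} does not carry over: there one may take $\phi(\mathbf{v}) = p\widetilde{c}\,\mathbf{v}$ with $\widetilde{c}$ of finite order precisely because the filtration on $D_K$ is concentrated in degree one at every embedding; for general $r_i$ weak admissibility forces the Frobenius eigenvalue of any rank-one lift to have a different valuation, so the relation $\phi(\alpha)\widetilde{c} = \alpha$ and the finite-order periodicity trick no longer apply as stated. Consequently the factorisation $\alpha = \beta\alpha'$, the claim that inertia acts on $\beta$ through $\sigma_0 \circ \overline{\eta}^{s_0}$, and the claim that the unramified part is still governed by the Breuil-module constant $c$ are exactly the points that need proof, and in your sketch they are asserted (``telescope'') rather than established.

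For comparison, the paper's proof never leaves characteristic $p$: it exhibits an explicit morphism of Breuil modules $\M(\overline{\chi},c) \to \M$ with $\overline{\chi} = \sigma_0 \circ \overline{\eta}^{k_0+s_0}$, given by $w \mapsto \sum_i u^{s_i} e_i v$ where $s_i = p(r_i p^{f-1} + \cdots + r_{i+f-1})/(p^f-1)$, checks compatibility with the filtration ($s_i \ge r_i$), with $\phi_1$ (via $s_{i+1} = p(s_i - r_i)$), and with the descent data, and then concludes by Lemma~\ref{lem:connected_breuil_module} together with \cite[Prop.~8.3]{SavittCompositio}, which (for $p>2$) guarantees that this nonzero map of rank-one Breuil modules identifies the generic fibres. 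If you want to salvage your approach you would have to carry out the period computation for a genuinely nontrivially filtered rank-one weakly admissible module and track the reduction mod $\m_E$ of both the tame and unramified parts; that is considerably more work than the paper's one-line morphism.
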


\begin{rem}
  \label{rem:well-defined-rank-one}
According to 
  \cite[Rem. 3.6]{sav06}, the congruences $k_i \equiv p(k_{i-1} + r_{i-1})
  \pmod{e(K/L)}$ imply that 
$$p^{f-1} r_0 + \cdots + r_{f-1} \equiv 0 \pmod{p^f-1}, $$
and $k_0$ is a solution
to $-p(p^{f-1} r_0  + \cdots + r_{f-1}) \equiv (p^{f}-1)k_0 \pmod{e(K/L)}$. 
 It follows that $s_0$ is an integer; moreover $(p^f-1)(k_0 + s_0)
  \equiv 0 \pmod{e(K/L)}$, so that the image of $\overline{\eta}^{k_0 + s_0}$
  lies in $\ell^{\times}$ and $\overline{\eta}^{k_0+s_0}$ is actually
  a character.
\end{rem}

\begin{proof}[Proof of Corollary~\ref{cor:chars for rank one breuil modules}]   We proceed as in Example 3.7 of \cite{sav06}.

Define $s_i = p(r_i p^{f-1} + \cdots + r_{i+f-1})/(p^f-1)$ with
subscripts
taken modulo $f$, and observe that $(k_i + s_i) \equiv p^i(k_0 + s_0)
\pmod{e(K/L)}$.  Let $\overline{\chi} = \sigma_0 \circ
\overline{\eta}^{k_0 + s_0}$.
We check that there is a morphism $\M(\overline{\chi},c) \rightarrow
\M$ with $\M(\overline{\chi},c)$ as in Lemma 
  \ref{lem:connected_breuil_module} (except that here we will use $w$
  to denote its generator, since $v$ is now our generator of~$\M$).

The morphism will send $w \mapsto \sum_i u^{s_i} e_i v$.  One checks
easily that this is a morphism of Breuil modules.  Indeed: the
filtration is preserved since $s_i \ge r_i$; the morphism commutes
with $\phi_1$ because $s_{i+1} = p(s_i - r_i)$; and
to check that the morphism commutes with descent data,
use the fact that $\widehat{g}(w) = (1 \otimes (\sigma_0 \circ
\overline{\eta}^{k_0 + s_0}(g))) w = \sum_i (\overline{\eta}^{k_i + s_i}(g)
\otimes 1) e_i w$.

Now the claim follows immediately from  Lemma 
  \ref{lem:connected_breuil_module}  and an application of
  \cite[Prop. 8.3]{SavittCompositio}.  (This last step uses our running hypothesis
  that $p > 2$.)
\end{proof}

\subsection{Necessary conditions: notation and preliminaries}
\label{sec:necess-cond-notat}

Let $\p$ be a prime of $F$ lying above $p$, and $\pi_{\p} \in \p$ our chosen
uniformiser.  Suppose that the residue field of $F_{\p}$ has order
$q=p^f$. 

In the remainder of Section~\ref{sec: necessity}, we consider the following situation.  Let $L$ be the unramified
quadratic extension of $F_{\p}$, and $K$ the splitting field of
$u^{p^{2f}-1}-\pi_{\p}$ over $L$.  Let $\varpi$ be a choice of
$\pi_{\p}^{1/(p^{2f}-1)}$ in $K$.  Let $k$ denote the
residue field of $K$, and if $g\in\Gal(K/F_{\p})$ then as before we let $\overline{\eta}(g)$
be the image of $g(\varpi)/\varpi$ in $k$. Suppose that $F_{\p}$ has
absolute ramification index $e$, and write $e'=e(p^{2f}-1)$.  (We
alert the reader
that in what follows, the fields $F_{\p}$ and $L$ will both take
turns being used
in the role of the field $L$ of the previous subsection.)

Suppose that $k$ embeds into $k_E$.  By Proposition \ref{prop:rank one breuil modules}, any rank one Breuil module
$\M$ with $k_E$-coefficients and descent
data from $K$ to $L$
may be written in the form
\begin{itemize}
\item $\M=((k \otimes_{\Fp} k_E)[u]/u^{e(p^{2f}-1)p}) \cdot v$, 
\item $(\Fil^1 \M)_i =u^{r_i}\M_i$,
\item $\phi_1(u^{r_i}e_iv)=(1 \otimes \gamma_i)e_{i+1}v$ for some $\gamma_i\in k_E^\times$,
\item $\widehat{g}(\sum_{i=0}^{2f-1} e_iv)=\sum_{i=0}^{2f-1}
  (\overline{\eta}(g)^{k_i} \otimes 1)e_iv$ for all $g\in\Gal(K/L)$.
\end{itemize}
Here the $k_i$, $r_i$ are any integers with $k_i\in[0,p^{2f}-1)$ and
$r_i\in[0,e(p^{2f}-1)]$ satisfying $k_{i+1}\equiv
p(k_i+r_i)\pmod{p^{2f}-1}$.  For $g \in \Gal(K/L)$ we write $\widehat{g}(e_i v) = (1
\otimes \chi_i(g)) e_i v$ where $\chi_i = \sigma_i \circ
\overline{\eta}^{k_i}$.  Note that
$\chi_i$, defined on $\Gal(K/L)$, is a homomorphism. 

Let $\chi : I_{F_{\p}} \to \OO_E^{\times}$ be an inertial character
with $\chi = \chi^{q^2}$ but $\chi \neq \chi^q$, and let
$\overline{\chi}$ denote its reduction modulo the maximal ideal of
$\OO_E$.  In what follows we will be concerned with Breuil
modules $\M$ as above that have the extra property $\chi_i \in
\{\overline{\chi},\overline{\chi}^q\}$ for all $i$.  In the remainder
of this subsection we introduce some notation that is special to this
situation (and that will be used repeatedly throughout the rest of the paper), and we
derive a variant of Corollary~\ref{cor:chars for rank one breuil
  modules}.

Let $\eta_i = (\sigma_i \circ \overline{\eta})|_{I_{F_{\p}}}$ for $0 \le i < 2f$ be a system of
fundamental characters of niveau $2f$ of $I_{F_{\p}}$; note that $\eta_i^p
= \eta_{i-1}$.   Then 
$\omega_i  = \eta_i \eta_{i+f}$ for $0 \le i < f$ is a system of fundamental characters of niveau $f$.
Write 
\begin{equation}\label{eq:chiproduct}
\overline{\chi}=\prod_{i=0}^{2f-1}\eta_i^{c_i}
\end{equation} with
$0\leq c_i\leq p-1$; since $\overline{\chi}$ is nontrivial, this is
unambiguous.  We let $J$ be the set of $i \in \{0,\ldots,2f-1\}$ such that  $\chi_i
= \overline{\chi}$.  References to elements of $J$ should always be
taken modulo $2f$, so that e.g. if $i = 2f-1$ then $i+1$ refers to $0$.

 The congruence $k_{i+1}\equiv
p(k_i+r_i)\pmod{p^{2f}-1}$ is equivalent to the relation $\chi_{i+1} =
\chi_i \eta_i^{r_i}$.  If $i\in J$ and $i+1\in J$, or if $i\notin J$ and
$i+1\notin J$, this gives $r_i\equiv 0\pmod{p^{2f}-1}$. In
either case, write $r_i=(p^{2f}-1)x_i$ for some $0\leq x_i\leq e$. If
$i\in J$ and $i+1\notin J$, we see that \[r_i=(p^{2f}-1)x_i+
(p^f-1)(p^{f-1}(c_{i+f+1}-c_{i+1})+p^{f-2}(c_{i+f+2}-c_{i+2})+\dots+(c_i-c_{i+f}))\] for some $x_i$, and  if $i\notin J$ and $i+1\in J$, then \[r_i=(p^{2f}-1)x_i+
(p^f-1)(p^{f-1}(c_{i+1}-c_{i+f+1})+p^{f-2}(c_{i+2}-c_{i+f+2})+\dots+(c_{i+f}-c_{i})).\]

Since the
expression
$(p^f-1)(p^{f-1}(c_{i+f+1}-c_{i+1})+p^{f-2}(c_{i+f+2}-c_{i+2})+\dots+(c_i-c_{i+f}))$
is nonzero and is strictly between $1-p^{2f}$ and $p^{2f}-1$, we
allow either $0\leq x_i\leq e-1$ or $1\leq x_i\leq e$, depending on whether
the sign of this expression is positive or negative.  If $i\in J$ and $i+1\notin J$, then the
allowable range is $0\leq x_i\leq e-1$ precisely when there is a $j\geq
1$ with $c_{i+k}=c_{i+k+f}$ for all $1\leq k<j$ and
$c_{i+j+f}>c_{i+j}$, and the situation is reversed in the case
$i\notin J$ and $i+1\in J$.  We summarize these conditions in the
following definition.

\begin{defn}
  \label{defn:allowable}
  Fix $J$ and $\overline{\chi}$ as above.  We say that $x_i \in \{0,1,\ldots,e\}$ is
  \emph{allowable} in each of the following situations, and \emph{not
    allowable} otherwise.
\begin{itemize}
\item $i,i+1 \in J$ or $i,i+1 \not\in J$;

\item $i\in J$, $i+1 \not\in J$: we require $x_i \neq e$ if there is $j\geq
1$ with $c_{i+k}=c_{i+k+f}$ for all $1\leq k<j$ and
$c_{i+j} < c_{i+j+f}$; we require $x_i \neq 0$ otherwise;

\item $i\not\in J$, $i+1 \in J$: we require $x_i \neq 0$ if there is $j\geq
1$ with $c_{i+k}=c_{i+k+f}$ for all $1\leq k<j$ and
$c_{i+j} < c_{i+j+f}$; we require $x_i \neq e$ otherwise.
\end{itemize}
Here subscripts should be taken modulo $2f$.  We also say $x_i$ is not
allowable if $x_i \not\in \{0,1,\ldots,e\}$.  We say that the list
$x_0,\ldots,x_{2f-1}$ is allowable if each $x_i$ is allowable.
\end{defn}

Thus a rank one Breuil module $\M$ with the property that
$\chi_i \in \{\overline{\chi},\overline{\chi}^q\}$ for all $i$ gives
rise to a set $J$ and an allowable collection $x_0,\ldots,x_{2f-1}$.
Conversely, it is straightforward to check that this construction can
be reversed: any $J$ and any
allowable list $x_0,\ldots,x_{2f-1}$, together with any choice of
$\gamma_i$'s, determines a Breuil module $\M$
with the desired property.  

Let $\psi$ denote the restriction to inertia of $T_{\st,2}^L(\M)$, and
note from Corollary~\ref{cor:chars for rank one breuil modules} that
$\psi$ depends only on the $\chi_i$'s and $r_i$'s, or equivalently only
on $J$ and the $x_i$'s.

\begin{lem}
  \label{lem:computationofpsi}
  Let $\M$ be a rank one Breuil module with $k_E$-coefficients and descent data
  from $K$ to $L$ with $\chi_i \in
  \{\overline{\chi},\overline{\chi}^q\}$ for all $i$.  Then 
 \begin{equation}
\label{eq:psiformula}
\psi =\prod_{i\in
  J}\eta_{i}^{c_i}\prod_{i\notin J}\eta_i^{c_{i+f}}\prod_{i=0}^{2f-1}\eta_i^{x_i}.
\end{equation}
\end{lem}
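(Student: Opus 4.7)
The plan is to apply Corollary~\ref{cor:chars for rank one breuil modules} to $\M$ and then rewrite the resulting expression for $\psi$ to match the claimed formula.  Since the residue field of $L$ has degree $2f$ over $\Fp$ and $e(K/L)=p^{2f}-1$, that corollary applies (with its~$f$ playing the role of our~$2f$).  Restricting its formula for $T^L_{\st,2}(\M)$ to inertia, and using $\sigma_0\circ\overline{\eta}|_{I_{F_{\p}}}=\eta_0$, we obtain $\psi=\eta_0^{k_0+s_0}$, where $s_0=p(p^{2f-1}r_0+\cdots+r_{2f-1})/(p^{2f}-1)$.

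Next I substitute the decomposition $r_i=(p^{2f}-1)x_i+\delta_i$ of the subsection text (where $\delta_i=0$ if $i$ and $i+1$ lie in the same part of the partition into $J$ and its complement, and $\delta_i$ is given by the explicit transition formulae otherwise).  This splits $s_0$ into two summands.  The ``$x$-part'' contributes the exponent $\sum_i p^{2f-i}x_i$; via the identity $\eta_0^{p^{2f-i}}=\eta_i$ (which follows from $\eta_j^p=\eta_{j-1}$ and $p^{2f}\equiv 1$ modulo the order of $\eta_0$) this produces the factor $\prod_i\eta_i^{x_i}$ of the desired expression for $\psi$.  Setting $T=\frac{p}{p^{2f}-1}\sum_i p^{2f-1-i}\delta_i$, it then suffices to prove
\[
k_0+T \equiv \sum_{i\in J}c_i p^{2f-i}+\sum_{i\notin J}c_{i+f}p^{2f-i}\pmod{p^{2f}-1}.
\]

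The value of $k_0$ modulo $p^{2f}-1$ is read off from $\chi_0|_{I_{F_{\p}}}=\eta_0^{k_0}$, which equals $\overline{\chi}=\prod_j\eta_j^{c_j}$ if $0\in J$, and $\overline{\chi}^q=\prod_j\eta_j^{c_{j+f}}$ otherwise (using $\eta_j^{p^f}=\eta_{j-f}$ to re-index in the second case).  Subtracting, the target congruence collapses to an identity writing $T$ as a sum indexed by those positions where $J$ disagrees with the starting group of~$0$, with each transition of~$J$ contributing its corresponding~$\delta_i$.  A direct computation confirms this: the relation $\chi_{i+1}=\chi_i\eta_i^{r_i}$ forces $\delta_i\equiv\pm(p^f-1)\sum_j c_j p^{i-j}\pmod{p^{2f}-1}$ at a transition (with sign according to its direction), since $\eta_i^{r_i}$ must realize the jump between $\overline{\chi}$ and $\overline{\chi}^q$, and this is exactly the contribution needed to convert $c_j$ into $c_{j+f}$ at the appropriate indices.

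The main obstacle is this final combinatorial verification, which is mechanical but needs careful bookkeeping modulo $2f$ and $p^{2f}-1$.  A useful simplification is that both sides of the congruence are $\Z$-linear in the tuple $(c_0,\ldots,c_{2f-1})$, so by telescoping it suffices to check the contribution of a single pair of transitions of $J$, which reduces the problem to a small closed-form identity among powers of $p$.
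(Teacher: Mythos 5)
Your proposal is correct and takes essentially the same route as the paper: the paper likewise obtains $\psi=\chi_0\eta_0^{s_0}=\eta_0^{k_0+s_0}$ (Corollary~\ref{cor:chars for rank one breuil modules} applied over $L$, whose residue field has degree $2f$), splits $s_0$ into $p^{2f}x_0+\cdots+px_{2f-1}$ plus a term linear in the $c_i$, and verifies the remaining identity by bookkeeping over the transitions of $J$. The only difference is organizational: the paper computes the coefficient of the single variable $c_0$ by summing contributions over transitions and then invokes cyclic symmetry, whereas you fix a pair of transitions and sum over the $c_j$ (using the exact transition formulae for the $\delta_i$, as you must, since only $T$ and not the individual $\delta_i$ may be reduced mod $p^{2f}-1$) --- the same finite check, organized dually.
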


\begin{proof}
Recall that \[s_0=\frac{p}{(p^{2f}-1)}(r_0p^{2f-1}+r_1p^{2f-2}+\dots+r_{2f-1}),\]
so that $\psi =\eta_0^{k_0+s_0} = \chi_0 \eta_0^{s_0}$. Write
$s_0$ as $p^{2f} x_0 + p^{2f-1} x_1 + \cdots + p x_{2f-1}$ plus a
linear expression in the $c_i$'s.  

We compute the coefficient of $c_0$ in this linear expression.
For each transition  $i
\in J$, $i+1 \not\in J$ with $i \in [0,f)$, the coefficient of $c_0$
in $p^{2f-i} r_i$ is $p^{2f}(p^f-1)$; on the other hand for each
transition $i \not\in J$, $i+1 \in J$ with $i \in [0,f)$ the coefficient
of $c_0$ in $p^{2f-i} r_i$ is
$-p^{2f}(p^f-1)$.  

For $i \in [f,2f-1)$ the respective coefficients
are $-p^{f}(p^f-1)$ for transitions $i \in J$, $i+1 \not\in J$, and
$p^f(p^f-1)$ for the reverse.  As a consequence:
\begin{itemize}
\item  If $0,f \in J$ or $0,f \not\in J$ then the net
number of transitions out of $J$ from $i=0$ to $i=f$ is zero, and similarly
from $f$ to $2f$.  In either case the coefficient of $c_0$ in $s_0$ is zero.

\item If $0 \in J$ and $f \not\in J$, then the net number of transitions
out of $J$ from $i=0$ to $i=f$ is $1$, and from $i=f$ to $i=2f$ is
$-1$.  In this case the coefficient of $c_0$ in $s_0$ is
$(p^{2f}(p^f-1) + p^f(p^f-1))/(p^{2f}-1) = p^f$.

\item Similarly if $0 \not\in J$ and $f\in J$, the coefficient of
  $c_0$ in $s_0$ is $-p^f$.
\end{itemize}

From (\ref{eq:chiproduct}) and the definition of $J$, the contribution of $c_0$ to $\chi_0$ is $\eta_0^{c_0}$ if $0 \in
J$ and $\eta_0^{p^f c_0}$ if $0 \not\in J$.  Thus the total
contribution of $c_0$ to $\psi = \chi_0 \eta_0^{s_0}$ is:
\begin{itemize}
\item $\eta_0^{c_0}$ if $0 \in J$, $f \in J$;
\item $\eta_0^{c_0}\eta_0^{p^f c_0} = \eta_0^{c_0} \eta_f^{c_0}$ if $0
  \in J$, $f \not\in J$;
\item $\eta_0^{-p^f c_0} \eta_0^{p^f c_0} = 1$ if $0 \not\in J$, $f
  \in J$;
\item $\eta_0^{p^f c_0} = \eta_f^{c_0}$ if $0 \not\in J$, $f \not\in J$.
\end{itemize}
In each case we obtain precisely the contribution of $c_0$ to the
first two products on the right-hand side of (\ref{eq:psiformula}).  The lemma follows
by cyclic symmetry, together with the fact that $\eta_0$ raised to the
power $p^{2f} x_0 + \cdots + px_{2f-1}$ is the third product on the
right-hand side of (\ref{eq:psiformula}). 
\end{proof}

\subsection{Necessary conditions: the reducible case}\label{ss:reducible}

Suppose that $\rhobar:G_{F_{\p}}\to\GL_2(k_E)$ with $k_E$ a finite
field into which $k$ may be embedded, and assume that $\rhobar$ is
the reduction mod $\m_E$ of a parallel potentially Barsotti-Tate
representation $\rho$ of type $\chi \oplus \chi^q$.  Let $\mathcal{H}$ be the $\m_E$-torsion of the
Barsotti-Tate group over $\bigO_K$ corresponding to $\rho$; then
$\mathcal{H}$ is a finite flat group scheme over $\bigO_K$ with
descent data to $F_{\p}$, and $\rhobar$ is the generic
fibre of $\mathcal{H}$.

In this subsection we suppose that
$\rhobar\cong\begin{pmatrix}\psi_1&*\\0&\psi_2\end{pmatrix}$ is reducible, and we
wish to restrict the possibilities for
$\psi_1$ and $\psi_2$.  Note that by a standard scheme-theoretic
closure argument, $\psi_1$ corresponds to a finite flat subgroup scheme
$\mathcal{G}$ of $\mathcal{H}$.  Let $\M$ be the rank one Breuil
module with $k_E$-coefficients and
descent data from $K$ to $F_{\p}$ corresponding to $\mathcal{G}$, and
let $\chi_i$ for $i=0,\ldots,2f-1$ be defined as in the previous subsection.
It follows from Corollary \ref{cor:dd mod p},
which does not depend on anything in this paper before
Section~\ref{sec:types-and-descent}, that the
descent data for $\mathcal{H}$ is of the form
$\overline{\chi}\oplus\overline{\chi}^q$, so that
we have  $\chi_i \in \{\overline{\chi}, \overline{\chi}^q\}$ for all
$i$.  Therefore we may define $J$ and $x_0,\ldots,x_{2f-1}$ as in the
previous subsection, and the analysis of the previous subsection applies to $\M$.

Since the descent data on $\M$ is from $K$ to $F_{\p}$ and not
simply from $K$ to $L$, we in fact have from Proposition~\ref{prop:rank one
  breuil modules} that $r_{i+f} = r_i$ and $k_{i+f} = k_i$ for all
$i$, or equivalently $\chi_i = \chi_{i+f}^q$ and $x_i = x_{i+f}$ for
all $i$.  In particular for all $i$ we have exactly one of $i,i+f$ in  $J$, and $x_{i+f} = x_i$ is allowable if and only if $x_i$ is. 
Letting $\pi$ denote the natural projection from
$\Z/2f\Z$ to $\Z/f\Z$,  we deduce from
Lemma~\ref{lem:computationofpsi} that $\psi_1|_{I_{F_{\p}}}$
has the form
\begin{equation}\label{eq:niveau one formula}
\psi_1|_{I_{F_{\p}}}=\prod_{i\in
 J}\omega_{\pi(i)}^{c_i}\prod_{i=0}^{f-1}\omega_i^{x_i}
\end{equation}
where $J$ contains exactly one of $i,i+f$ for all $i$, and
$x_0,\ldots,x_{f-1}$ are allowable for~$\overline{\chi}$ and $J$.

\begin{prop}
  \label{prop:niveau-one-get-everything-if-e-large}
If $e\ge p-1$, then (for fixed $\overline{\chi}$) any
inertial character of niveau~$f$ occurs as the right-hand side of
\eqref{eq:niveau one formula} for some choice of $J$ with exactly one
of $i,i+f \in J$ for all $i$, and some allowable
values 
$x_0, \ldots, x_{f-1}$.
 \end{prop}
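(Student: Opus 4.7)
The plan is to construct, for any inertial character $\psi$ of niveau $f$, data $(J, x_0, \ldots, x_{f-1})$ realizing $\psi$ as the right-hand side of \eqref{eq:niveau one formula}. First I would write $\psi = \prod_{i=0}^{f-1} \omega_i^{b_i}$ via the base-$p$ expansion, with $0 \le b_i \le p - 1$ (essentially unique, except that the trivial character also admits the representation $b_i = p - 1$ for all $i$). For any choice of $J$, the right-hand side contributes the exponent $y_i + x_i$ to $\omega_i$, where $y_i = c_i$ if $i \in J$ and $y_i = c_{i+f}$ if $i+f \in J$; the goal is to match $y_i + x_i$ with $b_i$ modulo the relations $\omega_i^p = \omega_{i-1 \bmod f}$.

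At each index $i$, either choice of $y_i \in \{c_i, c_{i+f}\}$ is realizable. If $y_i \le b_i$, take $x_i = b_i - y_i \in \{0,\ldots,p-1\}$. If $y_i > b_i$, take $x_i = b_i + p - y_i \in \{1,\ldots,p-1\}$ and decrement the effective $b_{i-1 \bmod f}$ by one, using $\omega_i^p = \omega_{i-1 \bmod f}$ to absorb the extra $p$ in the exponent. Crucially, the bound $x_i \le p-1$ gives $x_i \le e$ precisely because $e \ge p - 1$. A decrement at a position already zero propagates through the cyclic chain until reaching a position with a positive digit; the cascade terminates unless all $b_i = 0$, in which case $\psi$ is trivial and I would instead use the alternate all-$(p-1)$ representation.

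The main obstacle is the allowability of Definition~\ref{defn:allowable}: at each transition of $J$ (an index $i$ where $J$ contains different elements of $\{i, i+f\}$ and $\{i+1, i+1+f\}$), exactly one of $x_i = 0$ or $x_i = e$ is forbidden, according to the first discrepancy between $c_{i+k}$ and $c_{i+k+f}$. Because the choice of $y_i$ is free at every index, the remaining task is a local combinatorial argument: show that $J$ can be chosen so that the induced $x_i$'s avoid all forbidden values. Whenever a transition forces a bad $x_i$, I would swap $J$ at $i$ or $i+1$; such a swap shifts $x_i$ by $\pm(c_i - c_{i+f})$ up to induced carry effects, of magnitude at most $p - 1 \le e$, keeping the adjusted $x_i$ in range. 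I expect a finite case analysis on the patterns of forbidden values and the structure of the $c_i$'s completes the proof, with $e \ge p - 1$ providing precisely the margin needed for the local swaps.
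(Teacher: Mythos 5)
There is a genuine gap, and it sits exactly where the real difficulty of the proposition lies. Your greedy digit-matching up to the allowability constraints is fine (and for $e \ge p$ it essentially suffices, since each allowable range then contains $p$ consecutive integers), but for $e = p-1$ the whole content of the statement is the interaction between the base-$p$ carries and the forbidden endpoints of Definition~\ref{defn:allowable}, and your proposal defers precisely that to an unspecified ``finite case analysis'' with ``local swaps.'' The swap step is not substantiated: changing which of $\{i, i+f\}$ lies in $J$ does not merely shift $x_i$ by $\pm(c_i - c_{i+f})$ plus carries --- it also changes whether the indices $i-1$ and $i$ are transitions of $J$, and hence changes \emph{which} allowability constraints are in force at the neighbouring positions, possibly creating new violations there. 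Since the constraints are arranged cyclically around $\Z/f\Z$, you would need an argument that this repair process terminates with all constraints satisfied simultaneously; none is given. Note also that staying ``in range $[0,e]$'' is not the issue: at a transition exactly one of the endpoints $0$ or $e$ is excluded, and when $e = p-1$ the remaining $p-1$ values cannot realize all residues mod $p$ at that digit, so a purely local per-digit fix cannot work in general --- some global choice of $J$ is needed.

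For comparison, the paper sidesteps the cascade entirely: it first reduces, by twisting $\overline{\chi}$ by a niveau-$f$ character and replacing $\overline{\chi}$ by $\overline{\chi}^p$ (both of which leave the problem invariant), to the normalized situation $c_0 = \cdots = c_{f-1} = 0$, $c_{2f-1} \neq 0$; it then uses only the explicit interval sets $J = \{0,\ldots,f-1\}$ and $J = \{2f-i,\ldots,2f-1,0,\ldots,f-i-1\}$, for which the single constrained $x$-value and the constant offset $p^{i-1}c_{2f-i} + \cdots + c_{2f-1}$ can be computed directly, and checks that the resulting exponents of $\omega_{f-1}$ sweep out all residues modulo $p^f - 1$ (those prime to the relevant power of $p$ from the first $J$, those exactly divisible by $p^i$ from the others). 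If you want to salvage your approach, you would need to either prove a genuine termination/consistency lemma for your swap dynamics or, more realistically, make a global choice of $J$ adapted to the digit pattern of $\psi$ from the start, which is in effect what the paper does.
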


 \begin{proof}
The proposition is immediate if $e \ge p$, because for \emph{any} $J$ the
allowable range for each $x_i$ contains $p$ consecutive integers; so we suppose that
$e=p-1$, where the matter is more delicate.  Observe that the
claim is invariant under twisting $\overline{\chi}$ by a
character $\omega$ of niveau $f$: replacing $\overline{\chi}$ with
$\omega \overline{\chi}$ replaces each $\chi_i$ with $\omega \chi_i$,
leaving the possibilities for the integers $r_i$ and $s_0$ arising
from the relevant Breuil modules unchanged.
The claim is similarly invariant under replacing $\overline{\chi}$
with $\overline{\chi}^p$.  As a consequence of these observations we may suppose without loss of
generality that $c_{0},\ldots,c_{f-1}=0$ while $c_{2f-1} \neq 0$.

Consider first the set $J=\{0,\ldots,f-1\}$.   The allowable range for
$x_{f-1}$ is $[1,p-1]$ (since there is some $1 \le j \le f$
with $c_{f-1+j} > 0$ while each $c_{j-1}=0$), and $x_0,\ldots,x_{f-2}$ can
range over $[0,p-1]$.  Writing the right-hand side of \eqref{eq:niveau
  one formula} as $\omega_{f-1}$ raised to the power $p^{f-1} x_0 + \cdots
+ px_{f-2} + x_{f-1}$, we see that the exponent of $\omega_{f-1}$ obtains every
integer value in $[0,p^{f}-1]$ except those divisible by $p$.

Now consider the sets $J = \{2f-i,\ldots,2f-1,0,\ldots,f-i-1\}$ with
$1 \le i \le f-1$.  The allowable range for $x_{f-i-1}$ is $[0,p-2]$
since $c_{2f-1} > 0$ while each $c_{f-i},\ldots,c_{f-1}$ equals $0$;
for each other $x_i$ the allowable range is $[0,p-1]$.  For this
choice of $J$ the
right-hand side of \eqref{eq:niveau one formula} becomes $\omega_{f-1}$
raised to the power 
\begin{equation}\label{eq:basepsum}
(p^{i-1} c_{2f-i} + \cdots +  c_{2f-1})  + (p^{f-1} x_0 + \cdots
+ px_{f-2} + x_{f-1}).\end{equation} The right-hand term varies over all integers in the
range $[0,p^f-1]$ except those whose $p^i$-coefficient in base $p$ is
$p-1$.  In particular the base $p$ sum in \eqref{eq:basepsum} does not
have a carry from the $p^i$-place to the $p^{i+1}$-place.   Since
$c_{2f-1} \neq 0$, it follows that the values taken by
\eqref{eq:basepsum} (with allowable $x_0,\ldots,x_{f-1})$
include all integers in $[0,p^{f}-1]$ that are exactly divisible by
$p^i$.

All together, we find that for suitable choices of $J$ the right-hand
side of \eqref{eq:niveau one formula} when written as a power of
$\omega_{f-1}$ can take every exponent in the range $[1,p^{f}-1]$.
This is a complete set of powers of $\omega_{f-1}$.
\end{proof}

\subsection{Necessary conditions: the irreducible case}\label{ss:irreducible}

We retain the notation and hypotheses of the previous subsection, but
now we consider the case of an irreducible~$\rhobar$. In this case,
$\rhobar|_{G_L}\equiv\begin{pmatrix}\psi_1&0\\0&\psi_2\end{pmatrix}$
with $\psi_2 = \psi_1^q$.
Again, we examine the possibilities for $\psi_1|_{I_{F_{\p}}}$. Let
$\mathcal{H}$ be the finite flat group scheme with generic fibre descent data from
$K$ to $L$ corresponding to
$\rhobar |_{G_{L}}$, and let $\mathcal{G}$ be the finite flat subgroup scheme corresponding
to~$\psi_1$.  Note that the descent data on $\mathcal{H}$ must
extend to $\Gal(K/F_{\p})$ while the descent data on $\mathcal{G}$
must not.

Let $\M$ be the Breuil module with $k_E$-coefficients and descent
data from $K$ to $L$ corresponding
to $\G$.   It follows once again from Corollary \ref{cor:dd mod p} that the
descent data for $\mathcal{H}$ is of the form
$\overline{\chi}\oplus\overline{\chi}^q$, so that
we have  $\chi_i \in \{\overline{\chi}, \overline{\chi}^q\}$ for all
$i$.  Thus the analysis of subsection~\ref{sec:necess-cond-notat}
applies to $\M$, and we may employ the notation of that subsection; in
particular we take $J = \{ i : \chi_i  = \overline{\chi} \}$.

\begin{rem}\label{rem:oops}  In an earlier version of this paper we claimed to show that
  $\mathcal{H}$ must decompose as a product $\mathcal{G} \times
  \mathcal{G}'$ where $\mathcal{G}'$ is the finite flat subgroup
  scheme with descent data from $K$ to $L$ corresponding to $\psi_2$, in which case it
  would follow that $i \in J$ if and only if $i+f \in J$, and also that
  $x_i + x_{i+f} = e$ for all $i$.  Our proof of this claim was in error and we do
  not know whether or not the claim is true.  Nevertheless we have the
  following proposition.
\end{rem}

\begin{prop}
  \label{prop:irreduciblecase}
  There exists $J \subset \{0,\ldots,2f-1\}$ with $i \in J$ if and
  only if $i+f \in J$, and an allowable list $x_0,\ldots,x_{2f-1}$
  with $x_i + x_{i+f} = e$ for all $i$, such that
\begin{equation}
\label{eq:irredpsi}
\psi_1 |_{I_L} = \prod_{i\in
  J}\eta_{i}^{c_i}\prod_{i\notin
  J}\eta_i^{c_{i+f}}\prod_i\eta_i^{x_i} .
\end{equation}
\end{prop}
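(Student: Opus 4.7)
The plan is to combine the Breuil module analysis for $\M$ (the rank-one Breuil module corresponding to $\G$) with the global determinant constraint coming from the hypothesis that $\rho$ is parallel potentially Barsotti--Tate.  Applying the analysis of subsection~\ref{sec:necess-cond-notat} directly to $\M$ yields some subset $J_0\subseteq\{0,\ldots,2f-1\}$ and an allowable list $y_0,\ldots,y_{2f-1}$ such that
\[
\psi_1|_{I_L} \;=\; \prod_{i\in J_0}\eta_i^{c_i}\prod_{i\notin J_0}\eta_i^{c_{i+f}}\prod_i\eta_i^{y_i}.
\]
Since the descent data of $\M$ is only from $K$ to $L$ and not all the way to $F_{\p}$, the subset $J_0$ need not satisfy $i\in J_0\iff i+f\in J_0$, and the $y_i$'s need not satisfy the pairing $y_i+y_{i+f}=e$.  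I write $\psi_1|_{I_L}=\prod_i\eta_i^{a_i}$ for the total exponents.

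Next I would derive the key determinant identity.  By Definition~\ref{defn:barsotti tate lifts}, the determinant of $\rho$ is cyclotomic times a finite-order character of order prime to $p$, so that $\det\rhobar|_{I_{F_{\p}}}=\overline{\chi}\cdot\overline{\chi}^q\cdot\overline{\epsilon}|_{I_{F_{\p}}}$.  Using $\overline{\chi}=\prod_i\eta_i^{c_i}$ together with $\eta_i^q=\eta_{i+f}$ gives $\overline{\chi}^q=\prod_i\eta_i^{c_{i+f}}$, and the standard formula gives $\overline{\epsilon}|_{I_{F_{\p}}}=\prod_i\eta_i^e$.  Combining with $\psi_2=\psi_1^q$, so that $\psi_2|_{I_L}=\prod_i\eta_i^{a_{i+f}}$, produces the identity
\[
a_i+a_{i+f} \;\equiv\; c_i+c_{i+f}+e \pmod{p^{2f}-1}\quad\text{for every }i.
\]

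I then rewrite the formula for $\psi_1|_{I_L}$ in the required symmetric form.  For each pair $\{i,i+f\}$ with $0\le i<f$ I would make a binary choice whether to include both indices in $J$ or neither, yielding a symmetric $J$.  Set $x_j:=a_j-c_j$ if $j\in J$ and $x_j:=a_j-c_{j+f}$ otherwise.  The identity above then immediately forces $x_i+x_{i+f}\equiv e\pmod{p^{2f}-1}$.  A short case analysis based on $\{i,i+f\}\cap J_0$ (using $y_j\in[0,e]$, $c_j\in[0,p-1]$, and noting that for pairs already symmetric in $J_0$ the identity forces $y_i+y_{i+f}=e$) shows that each $x_j$ lies in $[0,e]$ and hence that the sum equals $e$ exactly.

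The main obstacle is then to verify that the binary choices can be made so that the list $(x_j)$ is allowable in the sense of Definition~\ref{defn:allowable}.  The allowability conditions only bite at the transitions of $J$ (positions where exactly one of $i,i+1$ lies in $J$), and since $J$ is symmetric these transitions come in pairs related by $i\mapsto i+f$.  At a mixed pair the two options differ exactly by swapping $c_i\leftrightarrow c_{i+f}$ in the value of $x_j$, which is precisely the combinatorial dichotomy appearing in Definition~\ref{defn:allowable}.  I would argue that choosing the option matching the transition behavior inherited from the original $\M$ wherever possible transfers the allowability of the $y_j$'s to the $x_j$'s, yielding a $J$ and $(x_j)$ with all the properties claimed.
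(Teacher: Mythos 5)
Your opening step (extracting $J_0$ and an allowable list from the Breuil module $\M$, noting that the descent data is only from $K$ to $L$ so that no symmetry is forced) matches the paper, but the key deduction you make next is not valid. From $\psi_1^{q+1}|_{I_L}=\epsilon\cdot\overline{\chi}^{q+1}$ you claim the componentwise congruence $a_i+a_{i+f}\equiv c_i+c_{i+f}+e \pmod{p^{2f}-1}$ for every $i$. Equality of characters only says that the exponent vector of the quotient lies in the lattice of relations among the fundamental characters (equivalently, a single congruence $\sum_i b_ip^{2f-i}\equiv 0 \pmod{p^{2f}-1}$), not that each component vanishes mod $p^{2f}-1$. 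In the paper this is exactly the point: using $e<p-1$ one writes the exponent vector as an integral combination of the lattice generators $(p,0,\ldots,0,-1)$, etc., with coefficients $a_i\in\{0,\pm 1\}$, and obtains $x_i+x_{i+f}-e\pm[c_i-c_{i+f}]=pa_i-a_{i+1}$, where the right-hand side need not be zero. Your proposal in effect assumes these lattice coefficients vanish, which lets you conclude $x_i+x_{i+f}=e$ ``immediately''; that conclusion is false in general at this stage, and the entire difficulty of the proposition lives in absorbing the nonzero $pa_i-a_{i+1}$ terms. (A smaller issue: the ``total exponents'' $a_i$ of $\psi_1|_{I_L}$ are not well defined without fixing a normalization, so the quantities $x_j:=a_j-c_j$ or $a_j-c_{j+f}$ are not yet meaningful.)

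The second gap is the allowability transfer, which you assert rather than prove. The paper's proof proceeds by choosing a \emph{maximal} interval $[j',j]$ of indices with $i\notin J$, $i+f\in J$, and performing one of two explicit ``moves'' (adjoining $\{j',\ldots,j\}$ to $J$, or deleting $\{j'+f,\ldots,j+f\}$), modifying the $x_i$'s by quantities involving both $c_i-c_{i+f}$ and the lattice coefficients $a_i$; a delicate case analysis (using the definition of allowability and the maximality of the interval, plus the special case $j=j'+(f-1)$) shows that at least one of the two moves yields an allowable list realizing the same $\psi$, and strictly decreases the number of asymmetric pairs, so the process terminates. Your suggestion to ``choose the option matching the transition behavior inherited from $\M$'' does not engage with the boundary indices $j'-1$, $j$, $j'+f-1$, $j+f$ where allowability can fail, nor with tracking the $a_i$'s so that the modified $x_i$'s stay in $[0,e]$. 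Finally, note that the bounding arguments you invoke require $e<p-1$; the case $e\ge p-1$ needs the separate argument of Lemma~\ref{lem:get-everything-if-e-is-large}, which your proposal does not address.
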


Per Remark~\ref{rem:oops}, the $J$ in the proposition may not
necessarily be the $J$ coming from~$\M$.  The proof of Proposition~\ref{prop:irreduciblecase} occupies the remainder of this section.

\begin{lem}\label{lem:determinant of modular of some weight}If
  $\rhobar$ has a parallel potentially Barsotti-Tate lift of type
$\chi \oplus\chi^{q}$, then $\det
\rhobar|_{I_{F_\p}}=\epsilon \cdot \overline{\chi}^{q+1}$, where $\epsilon$ is the
mod $p$ cyclotomic character.
\end{lem}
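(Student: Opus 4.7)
The plan is to extract $\det\rhobar|_{I_{F_\p}}$ from the two defining conditions of a parallel potentially Barsotti-Tate lift $\rho$ of type $\chi\oplus\chi^q$ given in Definition~\ref{defn:barsotti tate lifts}: namely, (a) $\det\rho=\epsilon\cdot\mu$ for some finite order character $\mu$ of $G_{F_\p}$ of order prime to $p$, and (b) the Weil-Deligne representation attached to $\rho$, restricted to $I_{F_\p}$, is isomorphic to $\chi\oplus\chi^q$.

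Taking determinants in (b), the inertial Weil-Deligne character attached to $\det\rho$ is $\chi\cdot\chi^q=\chi^{q+1}$. On the other hand, applied to (a): the factor $\epsilon$ is crystalline, so contributes trivially to the inertial Weil-Deligne character, while $\mu$ is tame of prime-to-$p$ order, so its attached Weil-Deligne character restricted to inertia is simply $\mu|_{I_{F_\p}}$ itself. Comparing, we obtain $\mu|_{I_{F_\p}}=\chi^{q+1}$, and therefore $\det\rho|_{I_{F_\p}}=\epsilon|_{I_{F_\p}}\cdot\chi^{q+1}$.

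Since $\rho$ lifts $\rhobar$, we have $\det\rhobar=\overline{\det\rho}$; reducing the previous displayed equality modulo the maximal ideal of $\bigO_E$ yields $\det\rhobar|_{I_{F_\p}}=\epsilon\cdot\overline{\chi}^{q+1}$, where now $\epsilon$ denotes the mod $p$ cyclotomic character as in the statement.

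The one delicate point is the identification of the inertial Weil-Deligne character of $\det\rho$ with the finite-order prime-to-$p$ piece $\mu|_{I_{F_\p}}$. This uses that the Weil-Deligne association commutes with determinants on potentially semistable representations, together with the parallel hypothesis, which forces the Hodge-Tate weight of $\det\rho$ to equal one at every embedding, and hence forces $\det\rho\cdot\epsilon^{-1}$ to have Hodge-Tate weight zero everywhere; such a character is automatically a twist of the tame character $\mu$ by an unramified character, and both become $\mu|_{I_{F_\p}}$ after restricting to $I_{F_\p}$. This compatibility is standard and justifies the computation above.
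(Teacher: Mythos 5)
Your proof is correct and follows essentially the same route as the paper: the paper's one-line proof simply invokes Definition~\ref{defn:barsotti tate lifts} together with section B.2 of \cite{cdt}, which encodes exactly the compatibility you spell out (the Weil--Deligne functor commutes with determinants, crystalline characters have unramified associated Weil--Deligne representations, and a prime-to-$p$ finite order character restricts to itself on inertia). Your extra remarks about the parallel hypothesis are harmless but not needed, since $\det\rho=\epsilon\cdot\mu$ is already part of the definition.
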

\begin{proof} This follows at once from Definition \ref{defn:barsotti tate lifts} and the results of section B.2 of \cite{cdt}.
\end{proof}

 By the Lemma, we must have
$(\psi_1|_{I_L})^{q+1} = \epsilon \cdot \overline{\chi}^{q+1}$.   A
straightforward computation shows that for any $J$ with $i \in J$ if
and only if $i+f \in J$, and any $x_0,\ldots,x_{2f-1}$ with
$x_i + x_{i+f} = e$ for all $i$, the 
character $\psi$ on the right-hand side of (\ref{eq:irredpsi}) has
$\psi^{q+1} = \epsilon \cdot \overline{\chi}^{q+1}$ as well; this uses
the fact that $\epsilon = \omega_0^e \cdots \omega_{f-1}^e$.

\begin{lem}
  \label{lem:get-everything-if-e-is-large}
  If $e \ge p-1$, then as $J$ varies over subsets of
  $\{0,\ldots,2f-1\}$ with $i \in J$ if and only if $i+f \in J$, and
  $x_0,\ldots,x_{2f-1}$ varies over allowable lists for $J$ with $x_i+ x_{i+f}
  = e$ for all $i$, the right-hand side of (\ref{eq:irredpsi}) varies over all
  inertial characters $\psi$ with $\psi^{q+1} = \epsilon \cdot
  \overline{\chi}^{q+1}$.  In particular
  Proposition~\ref{prop:irreduciblecase} is true if $e \ge p-1$.
\end{lem}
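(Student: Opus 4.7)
The plan is to restrict attention to the two ``extremal'' choices $J = \emptyset$ and $J = \{0, 1, \ldots, 2f-1\}$: both trivially satisfy $i \in J \Leftrightarrow i+f \in J$, and neither exhibits any $J$-transition, so for both choices every list $x_0, \ldots, x_{f-1} \in [0,e]$ is allowable (with $x_{i+f} := e - x_i$). Writing $\eta_i = \eta_0^{p^{2f-i}}$ and exploiting the relation $x_{i+f} = e - x_i$, one computes that for each such $J$ the right-hand side of \eqref{eq:irredpsi} has the form $\psi_0^J \cdot \eta_0^{(p^f - 1)T}$, where $T := \sum_{i=0}^{f-1} x_i\, p^{f-i}$ and $\psi_0^J$ is the character obtained by setting $x_0 = \cdots = x_{f-1} = 0$.

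First I would further restrict to $x_i \in [0, p-1]$, permitted by the hypothesis $e \ge p-1$. Then $T$ ranges bijectively over $p \cdot \{0, 1, \ldots, p^f - 1\}$ by unique base-$p$ representation, and since $\gcd(p, p^f + 1) = 1$ the map $z \mapsto pz$ is a bijection on $\Z/(p^f+1)\Z$; hence $T$ takes $p^f$ pairwise distinct residues modulo $p^f + 1$. Since the fibre of $\psi \mapsto \psi^{q+1}$ over $\epsilon \cdot \overline{\chi}^{q+1}$ has cardinality $p^f + 1$, each choice of $J$ covers every residue in that fibre except a single one.

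Next I would show that the two missing residues are different, so that $J = \emptyset$ and $J = \{0, \ldots, 2f-1\}$ together exhaust the fibre. A direct calculation gives $\psi_0^{\{0,\ldots,2f-1\}}/\psi_0^{\emptyset} = \eta_0^{(p^f-1)\delta}$ with $\delta := \sum_{i=0}^{f-1}(c_i - c_{i+f})\, p^{f-i}$, so it suffices to prove $\delta \not\equiv 0 \pmod{p^f+1}$.

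The main obstacle is this nonvanishing of $\delta$. Suppose for contradiction that $\sum_{j=1}^f z_j p^j = m(p^f+1)$ with $z_j := c_{f-j} - c_{2f-j} \in [-(p-1), p-1]$ and some $m \in \Z$; a crude size bound forces $|m| < p$, and reducing modulo $p$ gives $p \mid m$, hence $m = 0$. Then examining the smallest index $j$ with $z_j \neq 0$ modulo $p^{j+1}$ forces $p \mid z_j$, contradicting $|z_j| \le p-1$. It follows that $c_i = c_{i+f}$ for all $0 \le i < f$, which via the uniqueness of the expression \eqref{eq:chiproduct} forces $\overline{\chi}^q = \overline{\chi}$; since tame inertial characters have prime-to-$p$ order this lifts to $\chi = \chi^q$, contradicting the standing hypothesis $\chi \neq \chi^q$. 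Combining the two steps yields all $p^f+1$ residues in the fibre, completing the proof.
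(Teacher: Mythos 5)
Your proposal is correct and takes essentially the same approach as the paper: both proofs restrict to the transition-free sets $J=\emptyset$ and $J=\{0,\ldots,2f-1\}$ (where allowability is vacuous), express the character as a fixed base character times $\eta_0^{(p^f-1)X}$ with $X$ a base-$p$ expansion in the $x_i$, count residues modulo $p^f+1$, and use $\overline{\chi}\neq\overline{\chi}^q$ to recover the single character that each choice of $J$ misses. The only cosmetic differences are that the paper handles $e\ge p$ with the single set $J=\{0,\ldots,2f-1\}$ and identifies the two missed characters explicitly as $\overline{\chi}$ and $\overline{\chi}^q$, whereas you treat all $e\ge p-1$ uniformly and check the missed characters differ via the divisibility argument for $\delta$, which in the end just re-derives $\overline{\chi}\neq\overline{\chi}^q$ from $\chi\neq\chi^q$.
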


\begin{proof}
First take $J=\{0,\ldots,2f-1\}$, so that any $x_0,\ldots,x_{2f-1} \in
[0,e]$  are allowable.  We let $x_0,\ldots,x_{f-1}$ vary over $[0,e]$
and take $x_{i+f} = e-x_i$, and we 
consider the characters $\psi$ that occur.

 If $X = p^{f-1} x_{0} + \cdots + x_{f-1}$ then $\psi
 = \overline{\chi} \cdot \eta_{2f-1}^{e(p^f-1)/(p-1)}\cdot
\eta_{f-1}^{(1-p^f) X}$ and depends only on $X \pmod{p^f+1}$.  If $e \ge p$ then as $x_0,\ldots,x_{f-1}$ range over
the interval $[0,e]$, the integer $X$ ranges over an interval that
includes $[0,p^f]$, and $\psi$ ranges over all $p^f+1$ inertial
characters $\psi$ with $\psi^{q+1} = \epsilon \cdot \overline{\chi}^{q+1}$.

If instead $e=p-1$, then $X$ only ranges over the interval
$[0,p^f-1]$, and we obtain all possibilities for $\psi$
satisfying the condition on $\psi^{q+1}$ except $\psi = \overline{\chi}$.  
But performing the same analysis with $J = \emptyset$ gives us all
possibilities for $\psi$ except $\psi = \overline{\chi}^q$; in particular
since $\overline{\chi} \neq \overline{\chi}^q$
we obtain $\psi = \overline{\chi}$ as a possibility with $J
= \emptyset$.
  \end{proof}

Before continuing with the proof of Proposition~\ref{prop:irreduciblecase}, we make the following
observation.  Suppose that $i \in J$ if and only if $i+f \in J$, and
$x_0,\ldots,x_{2f-1}$ is an allowable list such that the product $\psi$
on the right-hand side  of (\ref{eq:irredpsi}) satisfies $\psi^{q+1} =
\epsilon \overline{\chi}^{q+1}$.  If $e < p-1$, then the condition
$x_i+x_{i+f} = e$ must be satisfied automatically.  Indeed, the
condition $\psi^{q+1} =\epsilon \overline{\chi}^{q+1}$ comes down to
$\prod_{i=0}^{f-1} \omega_i^{x_i + x_{i+f}} = \prod_{i=0}^{f-1}
\omega_i^e$.  Since $x_i+x_{i+f} \in [0,2e]$ and $e < p-1$, the only
possibility is $x_i + x_{i+f} = e$ for all $i$.  

  \begin{proof}[Proof of Proposition~\ref{prop:irreduciblecase}]
    Thanks to Lemma~\ref{lem:get-everything-if-e-is-large} we may
    assume $e < p-1$.   Let $J$ be any subset of $\{0,\ldots,2f-1\}$,
    let $x_0,\ldots,x_{2f-1}$ be allowable for $J$,
    and write 
$$ \psi  = \prod_{i \in J} \eta_i^{c_i} \prod_{i \not\in J}
\eta_i^{c_{i+f}} \prod_i \eta_i^{x_i}.$$
We wish to prove that if $\psi^{q+1} = \epsilon \cdot
\overline{\chi}^{q+1}$ (so that, for instance, $J$ and
$x_0,\ldots,x_{2f-1}$ might be the data associated to $\M$) then there exists some $J'$ with $i \in J'$ if
and only if $i+f \in J'$, and allowable $x'_0,\ldots,x'_{2f-1}$ such
that if we write
$$ \psi'  = \prod_{i \in J'} \eta_i^{c_i} \prod_{i \not\in J'}
\eta_i^{c_{i+f}} \prod_i \eta_i^{x'_i} $$ then in fact we have $\psi =
\psi'$.  (Then the desideratum $x'_i +
x'_{i+f} = e$ also holds, by the observation immediately before we
began the remainder of the proof.)

Assuming that $J$ does not already satisfy $i \in J$ if and only if $i+f \in J$,
it suffices to produce $J'$ and allowable $x'_0,\ldots,x'_{2f-1}$ such
that $\psi'=\psi$ and $J'$ has more pairs $(i,i+f)$ with $i \in J'$ if and only if $i+f \in J'$
than $J$ does.  (Then repetition of this step will complete the argument.)
This is what we now carry out.

Let $S$ be the set of indices $i$ such that $c_i$ appears as an
exponent twice in the product for $\psi$ (equivalently, such that $i \in J$ and $i+f
\not\in J$), and similarly let $T$
be the set of indices $i$ such
that $c_i$ occurs zero times (equivalently, $i \not\in J$ and $i+f \in
J$).  Note that $S = f + T$ (with the obvious
meaning for this notation).  Then the 
condition on $\psi^{q+1}$ is
$$ \prod_{i = 0}^{f-1} \omega_i^{x_i + x_{i+f}} \prod_{i \in T} \omega_i^{-c_i}
\prod_{i \in S} \omega_i^{c_i} = \prod_{i=0}^{f-1} \omega_i^{e} $$
which we re-write as
$$ \prod_{i=0}^{f-1} \omega_i^{x_i + x_{i+f} - e \pm [c_i - c_{i+f}]} = 1$$
where the brackets around $c_i - c_{i+f}$ denote that the term may not
occur (in this case, it occurs with sign $+$ if $i \in S$, with sign $-$ if $i
\in T$, and not at all if $i$ is in neither $S$ nor $T$). 

Each exponent in this product lies in the interval $[-e-p+1,e+p-1]
\subset [-(2p-3),(2p-3)]$ since  $e < p-1$.  Now, if $\prod_{i=0}^{f-1}
\omega_i^{y_i} = 1$ then the vector $(y_0,\ldots,y_{f-1})$ must be an
integral linear combination 
$$a_0 (p,0,\ldots,0,-1)  + a_1 (-1,p,0,\ldots,0) + \ldots + a_{f-1}
(0,\ldots,0,-1,p).$$
It is easy to
check that if each $y_i$ lies in $[-(2p-3),(2p-3)]$ then in fact each
$a_i$ must be $0$ or $\pm 1$.

Writing the vector $(x_i + x_{i+f} - e \pm [c_i - c_{i+f}])_i$ as such a
linear combination, we have
$$ x_i + x_{i+f} - e \pm [c_i - c_{i+f}] = pa_i - a_{i+1} $$ for all
$i \in \{0,\ldots,2f-1\}$ where we conventionally set $a_{i+f} :=
a_i$; take all subscripts modulo $2f$; and the sign is $+$ if $i \in
S$ and $-$ if $i \in T$, and $0$ otherwise.

Choose any \emph{maximal} interval $[j',j] \subset \Z$ such that $i \not\in J$ and $i+f \in J$ for $i
\in [j',j]$.  (As usual, we abuse
notation and take all indices modulo $2f$.)  By definition this
interval cannot contain both $i$ and $i+f$ for any $i$, so it contains at most $f$
integers.  Now if $i \in [j',j]$ we have $i \in T$ and
$i+f \in S$, so that in fact 
\begin{equation}\label{eq:keyequation}
x_i + x_{i+f} - e - (c_i - c_{i+f}) = pa_i - a_{i+1} \end{equation}
for $i \in [j',j]$.

First consider the case $j = j' + (f-1)$.   Define $J' =
\{0,\ldots,2f-1\}$, and set 
\begin{itemize}
\item  $x'_i = x_i - (c_i - c_{i+f}) - pa_i +
a_{i+1}$ if $i \in [j',j'+f-1)$,

\item $x'_{j'+f-1} = x_{j'+f-1} - (c_{j'+f-1} - c_{j'-1}) -
  pa_{j'-1}$,

\item $x'_{j'-1} = x_{j'-1} + a_{j'}$, and

\item $x'_i = x_i$ for all other indices $i$.
\end{itemize}    
One checks
easily that $\psi' = \psi$, and by construction $x_i'
+ x'_{i+f} = e$ for all $i$.   We next verify that $x'_{j'-1}$ remains
in the interval $[0,e]$.  Note that by our choice of interval $[j',j]$ we have  $j'-1 \in J$ while
$j' \not\in J$.  If we had $a_{j'} = -1$ then (\ref{eq:keyequation})
for $i = j'$
implies $c_{j'} > c_{j'+f}$, and according to the definition of
allowability we must have $x_{j'-1} > 0$; hence $x'_{j'-1}$ remains non-negative.
Similarly if $a_{j'} = 1$ we still have $x'_{j'-1} \le e$.   This
completes the verification.  

Since
$x'_{i} = x_{i}$ for $i \not\in [j'-1,j'+f-1]$, we in fact have $x'_i \in
[0,e]$ for all $i \in [j'-f,j'-1]$.   But  $x_i'
+ x'_{i+f} = e$ for all $i,$ and one of the two summands always lies in
$[0,e]$; therefore so does the
other.  Since $J' = \{0,\ldots,2f-1\}$ the list
$x'_0,\ldots,x'_{2f-1}$ is allowable and we
are done.  

Henceforth suppose that $j-j' < f-1$.  Consider the
following two ``moves'':
\begin{enumerate}
\item Set $J' = J \cup \{j',\ldots,j\}$, and define
  \begin{itemize}
  \item $x'_i = x_i - (c_i - c_{i+f}) - pa_i + a_{i+1}$ if $i \in
    [j',j)$,

  \item $x'_j = x_j - (c_j - c_{j+f}) - pa_j$,

  \item $x'_{j'-1} = x_{j'-1} + a_{j'}$,

 \item $x'_i = x_i$ for all remaining indices.
  \end{itemize}

\item Set $J' = J \setminus \{j'+f, \ldots, j+f\}$ and define
 \begin{itemize}
  \item $x'_{i+f} = x_{i+f} - (c_i - c_{i+f}) - pa_i + a_{i+1}$ if $i \in
    [j',j)$,

  \item $x'_{j+f} = x_{j+f} - (c_j - c_{j+f}) - pa_j$,

  \item $x'_{j'+f-1} = x_{j'+f-1} + a_{j'}$,

 \item $x'_i = x_i$ for all remaining indices.
  \end{itemize}
\end{enumerate}
In either case we have $\psi' = \psi$.  For $i \in [j',j)$ we have
$x'_i + x'_{i+f} = e$, from which it follows that $x'_i,x'_{i+f} \in
[0,e]$ (since at least one is in that interval); moreover $i,i+1$ are
either both in $J'$ or both not in $J'$ for $i \in [j',j)$ or
$[j'+f,j+f)$.  Thus $x'_i,x'_{i+f}$ are allowable for $i \in [j',j)$.  

To decide whether the list $x'_0,\ldots,x'_{2f-1}$ is allowable, the
only issue that remains is the allowability of $x'_{j'-1}$ and $x'_j$
after move (i), or of $x'_{j'+f-1}$ and $x'_{j+f}$ after move
(ii).  Note that $x'_{j'-1}$ and $x'_j$ are not the same object
since $j-j' < f-1$, and similarly for $x'_{j'+f-1}$ and $x'_{j+f}$ .
We will
argue that at least one of these two pairs must be allowable.

First consider move (i) and the allowability of $x'_j$.  We have $x'_j
+ x'_{j+f} = e - a_{j+1}$, so in particular $x'_j \in [-1,e+1]$.    Note that if $a_{j+1} \neq 0$ then the last term on the left-hand
side of 
$$ x_{j+1} + x_{j+f+1} - e \pm [c_{j+1} - c_{j+f+1}] = pa_{j+1} -a_{j+2}$$
must be nonzero, so $j+1$ is in $S$ or $T$.  By maximality  of $[j',j]$ we have $j+1 \not\in T$, so
$j+1 \in S$ and the sign $\pm$ must be $+$.  In particular either $a_{j+1}=1$ and $c_{j+1} >
c_{j+f+1}$, or $a_{j+1}=-1$ and $c_{j+1} < c_{j+f+1}$.

There are several conceivable ways that
$x'_j$ might be non-allowable.
\begin{itemize}
\item If $x'_j = -1$, then $x'_{j+f} = x_{j+f} = e$ and $a_{j+1} = 1$.  
We have seen that $a_{j+1}=1$ implies $j+f+1 \not\in J$ and $c_{j+1} >
c_{j+f+1}$.  But  since $j+f \in J$, under these
conditions $x_{j+f}=e$ would not have been allowable to begin with.
Thus $x'_j = -1$ cannot occur.

\item If $x'_j = e+1$, then $x'_{j+f} =x_{j+f} = 0$ and $a_{j+1} = -1$.  We have seen
that $a_{j+1} = -1$ implies $j+f+1 \not\in J$ and $c_{j+1} <
c_{j+f+1}$.  But since $j+f \in J$, under these conditions $x_{j+f} =
0$ would not have been allowable to begin with.  Thus $x'_j = e+1$
cannot occur.

\item If $x'_j = 0$ and is not allowable, then since $j \in J'$ we must have
$j+1 \not\in J'$.  By maximality of $[j',j]$ we have $j+f+1 \not\in
J'$ and $j+1 \not\in S \cup T$.  In particular $a_{j+1} = 0$ and
$x'_{j+f} = x_{j+f} = e$.  The allowability of $x_{j+f} = e$ when $j+f
\in J$, $j+f+1 \not\in J$ implies the allowability of $x'_{j} = 0$
when $j \in J'$, $j+1 \not\in J'$, a contradiction.

\item If $x'_j = e$ and is not allowable, then since $j \in J'$ we
  must have $j+1 \not\in J'$.    By maximality of $[j',j]$ we have $j+f+1 \not\in
J'$ and $j+1 \not\in S \cup T$.  In particular $a_{j+1} = 0$ by the
remarks above, and
$x'_{j+f} = x_{j+f} = 0$.  The allowability of $x_{j+f} = 0$ when $j+f
\in J$, $j+f+1 \not\in J$ implies the allowability of $x'_{j} = e$
when $j \in J'$, $j+1 \not\in J'$, a contradiction.
\end{itemize}

We deduce that \emph{in all cases, $x'_j$ is allowable after move (i)}.  By an
identical argument, \emph{in all cases $x'_{j+f}$ is allowable after
  move (ii).}

Now consider move (i) and the allowability of $x'_{j'-1} = x_{j'-1} +
a_{j'}$.  Note that if $a_{j'} \neq 0$ then the last term on the
left-hand side of 
$$ x_{j'} + x_{j'+f} - e - (c_{j'} - c_{j'+f}) = pa_{j'}  - a_{j'+1} $$
must be positive if $a_{j'} = 1$ and negative if $a_{j'} = -1$.  That
is, if $a_{j'} = 1$ then $c_{j'} < c_{j'+f}$ and if $a_{j'} = -1$ then
$c_{j'} > c_{j'+f}$.  There are again several conceivable ways that
$x'_{j'-1}$ might be non-allowable.
\begin{itemize}
\item  If $x'_{j'-1} = -1$, then $a_{j'} = -1$ and $x_{j'-1} = 0$.  We obtain
  $c_{j'} > c_{j'+f}$.  Since $j' \not\in J$, if we had $j'-1 \in J$ it would
  contradict the allowability of $x_{j'-1} = 0$.  So in this case we
  must have had $j'-1 \not\in J$ to begin with.

\item If $x'_{j'-1} =0$ or $e$, then since $j' \in J'$, in order to be
  non-allowable we must have $j'-1 \not\in J'$, and so $j'-1 \not\in
  J$.

\item If $x'_{j'-1} = e+1$, then $a_{j'} = 1$ and $x_{j'-1} = e$.  We obtain
  $c_{j'} < c_{j'+f}$.  Since $j' \not\in J$, if we had $j'-1 \in J$ it would
  contradict the allowability of $x_{j'-1} = e$.  So yet again we must
  have had $j'-1 \not\in J$.
\end{itemize}

We deduce that \emph{in all cases, $x'_{j'-1}$ is allowable after move
  (i) provided that $j'-1 \in J$}.   By an identical argument,
\emph{in all cases, $x'_{j'+f-1}$ is allowable after move  (ii)
  provided that $j'+f-1 \not\in J$}.

By maximality of $[j',j]$, we must have either $j'-1\in J$ or $j'+f-1
\not\in J$.  Therefore at least one of moves (i) and (ii) results in an
allowable collection $J'$ and $x'_0,\ldots,x'_{2f-1}$ with $\psi' = \psi$.   After such
a move, the set $T$ for $J'$ is strictly smaller than it was for~$J$.   The result follows.  
  \end{proof}

\section{Descent data on strongly divisible
  modules and Galois types}
\label{sec:types-and-descent}
For this section only, let $F/\Qp$ be a finite extension.  Suppose that $K/F$ is a tamely
ramified Galois extension with ramification index $e(K/F)$.  Suppose
moreover that there exists a uniformiser $\pi \in \OO_K$ with
$\pi^{e(K/F)} \in L$, where $L$ is the maximal unramified extension of
$F$ contained in $K$; then $K = L(\pi)$ and $L$ contains all of the
$e(K/F)$th roots of unity.  Let $k$ denote the residue field of
$K$ (also equal to the residue field of $L$), and $K_0$ the maximal
unramified extension of $\Qp$ contained in $K$. Let $E/\Qp$ be a
finite extension.

Suppose that $\rho$ is a potentially Barsotti-Tate  representation
$G_F \rightarrow \GL_n(E)$  that becomes Barsotti-Tate over
$K$.  We assume as usual that $K_0$ embeds into the coefficients $E$.  Write $D := D_{\st,2}^{K}(\rho)$ and let $\N$ be a strongly
divisible module with descent data over $S := S_{K,\OO_E}$ contained 
in $S[1/p] \otimes_{K_0  \otimes_{\Qp} E} D$.

Let $\tau$ be the inertial Galois type of $\rho$.  Note that $\tau$
factors through $I_L/I_K \cong \Gal(K/L)$.  Since $\Gal(K/L)$ is
abelian, $\tau$ decomposes as a direct sum of $n$ characters $\chi_i :
I_L \rightarrow \OO_E^{\times}$, and we use the isomorphism $I_L/I_K
\cong \Gal(K/L)$ to identify each $\chi_i$ as a character of
$\Gal(K/L)$.

\begin{prop}\label{prop:diagonal dd} We have $\tau = \chi_1 \oplus \cdots \oplus \chi_n$ if
  and only if there is an $S$-basis $v_1,\ldots,v_n$ of $\N$ such that
the descent data acts on $\N$ via $\widehat{g} \cdot v_i = (1 \otimes \chi_i(g))
v_i$ for all $g \in \Gal(K/L)$. 
\end{prop}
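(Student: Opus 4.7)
The plan is to exploit the fact that both $\N$ (as $S$-module with descent data) and $D$ (as $K_0 \otimes_{\Qp} E$-module with descent data) carry compatible actions of $\Gal(K/L)$, and to transfer diagonal bases between them using the natural $\Gal(K/L)$-equivariant inclusion $\N \hookrightarrow S[1/p] \otimes_{K_0 \otimes E} D$. The decisive structural fact is that since $K/F$ is tamely ramified, $\Gal(K/L)$ is cyclic of order $e(K/F)$ prime to~$p$, so representations in characteristic zero decompose into character isotypic pieces. Note that the descent data on $D$ is $K_0 \otimes E$-linear (since $K_0 \subset L$ is fixed by $\Gal(K/L)$), while the descent data on $\N$ is only semilinear over $S$ through the natural action $g(u) = (g(\pi)/\pi)u$ on $S$, with $\Gal(K/L)$ fixing $W(k) \otimes_{\Zp} \OO_E$.

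For the ``only if'' direction, suppose $\tau = \chi_1 \oplus \cdots \oplus \chi_n$. Unwinding the definition of $\tau$, there is a $K_0 \otimes E$-basis $d_1,\ldots,d_n$ of $D$ with $\widehat{g}(d_i) = \chi_i(g) d_i$. I would produce the desired $S$-basis of $\N$ by an averaging construction: taking any lift $\tilde{v}_i \in \N$ of $d_i$ (available after clearing denominators, since $\N$ generates $S[1/p] \otimes_{K_0 \otimes E} D$ over $S[1/p]$), set
\[
v_i := \frac{1}{|\Gal(K/L)|}\sum_{g \in \Gal(K/L)} \chi_i(g)^{-1} \widehat{g}(\tilde{v}_i),
\]
which is well-defined since $|\Gal(K/L)|$ is coprime to $p$. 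By construction $\widehat{g}(v_i) = \chi_i(g) v_i$, and this averaging preserves the property of reducing to $d_i$ after pushing $\N$ forward along an appropriate $\Gal(K/L)$-equivariant reduction map (for instance through $S \to W(k) \otimes_{\Zp} \OO_E$ killing $u$ and the divided powers of the Eisenstein polynomial). A Nakayama argument then upgrades $\{v_i\}$ from a set reducing to a basis to a genuine $S$-basis of $\N$.

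For the ``if'' direction, given an $S$-basis $v_1,\ldots,v_n$ of $\N$ with $\widehat{g}(v_i) = \chi_i(g) v_i$, decompose $D = \bigoplus_\chi D_\chi$ into $\Gal(K/L)$-isotypic components as $K_0 \otimes E$-modules (possible as $|\Gal(K/L)|$ is prime to $p$), and likewise $S[1/p] \otimes_{K_0 \otimes E} D = \bigoplus_\chi S[1/p] \otimes_{K_0 \otimes E} D_\chi$. Writing $v_i = \sum_\chi v_{i,\chi}$, the hypothesis $\widehat{g}(v_i) = \chi_i(g) v_i$ combined with the semilinearity of $\widehat{g}$ on $S[1/p]$ forces each nonzero $v_{i,\chi}$ to lie in the $(\chi_i/\chi)$-eigenspace of $S[1/p]$ under the $\Gal(K/L)$-action on $S$. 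Since the $v_i$ form an $S[1/p]$-basis of $S[1/p] \otimes_{K_0 \otimes E} D$, a dimension count shows each $D_{\chi_i}$ is one-dimensional and the $\chi_i$ are among the characters appearing in the $\Gal(K/L)$-action on $D$, yielding $\tau = \bigoplus_i \chi_i$.

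The main obstacle is the Nakayama step of the ``only if'' direction: one must identify a concrete $\Gal(K/L)$-equivariant reduction of $\N$ through which the averaged lifts $v_i$ can be verified to form a basis (rather than merely a generating set for a proper submodule). This requires careful use of the specific structure of $S = S_{K,\OO_E}$, in particular the compatibility of its natural surjections (onto $W(k) \otimes \OO_E$ and onto $\OO_K$) with the averaging idempotents $\frac{1}{|\Gal(K/L)|}\sum_g \chi_i(g)^{-1}\widehat{g}$, whose semilinearity over $S$ means they are only $S^{\Gal(K/L)}$-linear. Once the correct reduction is in place, the rest of the argument reduces to routine linear algebra facilitated by the coprimality of $|\Gal(K/L)|$ and~$p$.
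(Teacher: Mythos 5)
Your overall strategy (exploit tameness, i.e.\ that $|\Gal(K/L)|$ is prime to $p$, use averaging idempotents and Nakayama, work idempotent by idempotent) is in the right spirit, but the ``only if'' direction as written has a genuine gap, and it sits exactly at the step you yourself flag as the main obstacle. You start from an arbitrary eigenbasis $d_1,\ldots,d_n$ of the \emph{rational} object $D$ and lift it to $\N$ ``after clearing denominators.'' But the elements of $\N$ you obtain this way are (multiples of) lifts of vectors that have no reason to generate the \emph{integral} object: the image of $\N$ under your reduction $S \to W(k)\otimes_{\Zp}\OO_E$ is a lattice in $D$, and a given eigenbasis of $D$ need not be a basis of that lattice; worse, once you multiply by positive powers of $p$ to land inside $\N$, the reductions of your $v_i$ modulo the maximal ideal of $e_\sigma S$ can literally vanish. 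So the hypothesis of Nakayama's lemma (that the $v_i$ span $\N/\m\N$) is not available, and in general your averaged $v_i$ generate only a proper $S$-submodule of $\N$. Averaging cannot repair this, since the projectors do not change the submodule generated. What is needed is an eigen-decomposition of an \emph{integral} reduction of $\N$ itself, not of $D$: either observe that the prime-to-$p$ projectors preserve the lattice $\N\otimes_S(W(k)\otimes\OO_E)$ and choose an eigenbasis there, or, as the paper does, view each $e_\sigma\N$ as a genuine (linear, not semilinear) representation of $\Gal(K/L)$ over the invariant subring $e_\sigma S_0$ of power series in $u^{e(K/F)}$, over which $e_\sigma S$ is finite free with basis $1,u,\ldots,u^{e(K/F)-1}$; this gives a simultaneous eigenbasis of a free $e_\sigma S_0$-module of rank $e(K/F)n$, from which one selects $n$ eigenvectors reducing to a basis of $e_\sigma\N/\m_{e_\sigma S}e_\sigma\N$ and then applies Nakayama.

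A second point you gloss over, which the paper treats explicitly: even after producing an $S$-eigenbasis of $\N$, you cannot arrange a priori that the characters are globally $1\otimes\chi_i$, because $K_0\otimes_{\Qp}E$ (and $S$) is a product of factors rather than a domain. The characters obtained are valued in $(W(k)\otimes_{\Zp}\OO_E)^{\times}$, and only after applying each idempotent $e_\sigma$ does the multiset of characters agree with $\{\chi_i\}$; one must relabel the eigenvectors separately for each $\sigma$ and reassemble $v_i=\sum_\sigma v_{i,\sigma}$. Relatedly, your ``if'' direction's dimension count (``each $D_{\chi_i}$ is one-dimensional'') is not the right statement when characters repeat and must likewise be carried out factor by factor; the paper instead transfers an eigenbasis of $\N$ to an eigenbasis of $D$ by invoking the argument of Proposition 6.6 of the totally ramified paper, which is the bridge your proposal asserts via the reduction map but does not prove.
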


\begin{proof} For each embedding $\sigma : K_0 \rightarrow E$, let
  $e_{\sigma}$ denote the corresponding idempotent in $W(k)
  \otimes_{\Zp} \OO_E$, so that $S \cong
  \oplus_{\sigma} e_{\sigma} S$ with each $e_{\sigma} S$ a local domain.
  Since $\widehat{g}$ fixes each $e_{\sigma}$, we see that
  $\widehat{g}$ acts separately on each $e_{\sigma}\N$.

Suppose we know that $\N$ has an $S$-basis $v'_1,\ldots,v'_n$ on
which $\widehat{g} \cdot v'_i = \psi_i(g) v'_i$ for some characters $\psi_i : \Gal(K/L)
\rightarrow (W(k) \otimes_{\Zp} \OO_E)^{\times}$.  The argument in the
first three paragraphs of the proof of Proposition 6.6 of
\cite{geesavitttotallyramified} proves that $D$ has a
$K_0\otimes_{\Qp} E$-basis on which $\widehat{g}$  acts via
the maps $\psi_i$.  This is enough for the `if' direction; for the `only
if' direction, recall that by definition of $\tau$, we know that $D$ also has a
$K_0\otimes_{\Qp} E$-basis on which $\widehat{g}$ acts via the maps
$1 \otimes \chi_i$.  Since $K_0 \otimes_{\Qp} E$ is not a domain it may not
quite be the case that $\psi_i = 1\otimes\chi_i$, but at least for each
$\sigma$ the multiset $\{e_{\sigma} \psi_i\}$ is equal to the multiset
$\{\chi_i\}$: that is, we may relabel $e_{\sigma} v'_1,\ldots,e_{\sigma} v'_n$
as $v_{1,\sigma},\ldots,v_{n,\sigma}$ in such a way that $\widehat{g}
\cdot v_{i.\sigma} = \chi_i(g) v_{i,\sigma}$.  If we define $v_i =
\sum_{\sigma} v_{i,\sigma}$ then $v_1,\ldots,v_n$ is the desired basis.
Thus we are reduced to the statement at the beginning of the paragraph.

In particular it is enough to show for each $\sigma$ that the free $e_{\sigma} S$-module
$e_{\sigma} \N$ of rank $n$ has a $e_{\sigma} S$-basis on which $\widehat{g}$
acts by characters $\Gal(K/L) \rightarrow \OO_E^{\times}$.  Let
$e_{\sigma} S_0$ be the subring of $e_{\sigma} S$ consisting of power series
in $u^{e(K/F)}$.  Observe that $e_{\sigma} S$ is \emph{free} of rank
$e(K/F)$ as an $e_{\sigma} S_0$-module, with basis
$1,\ldots,u^{e(K/F)-1}$; this is because $e(K/F)$ divides the absolute
ramification index of $K$, so that if $p^{\alpha}$ exactly divides
$u^{m-1}$ in $S$ and a larger power of $p$ divides $u^{m}$, then $m$
is divisible by $e(K/F)$. 

We now regard $e_{\sigma} \N$  as a free $e_{\sigma}
S_0$-module of rank $e(K/F) n$.  Note that $\Gal(K/L)$ acts trivially on $e_{\sigma}
S_0$, so that $e_{\sigma} \N$ is actually a
$\Gal(K/L)$-representation over $e_{\sigma} S_0$.  Since $\Gal(K/L)$ is abelian and $p \nmid
\#\Gal(K/L)$, and since $\OO_E^{\times}$ contains the
$e(K/F)$th roots of unity, the module $e_{\sigma} \N$ actually has a
simultaneous $e_{\sigma} S_0$-basis of eigenvectors $y_1,\ldots,y_{e(K/F) n}$ for the
action of $\Gal(K/L)$.  Relabel the elements~$y_i$ so that
$y_1,\ldots,y_n$ are a basis for the $k_E$-vector space
$e_{\sigma}\N/(\m_{e_{\sigma} S} )e_{\sigma} \N$; here $\m_{e_{\sigma}
  S}$ is the maximal ideal of $e_{\sigma} S$ and $k_E$ is the residue
field of $E$.  By Nakayama's lemma \cite[Thm. 2.3]{matsumura}, the elements $y_1,\ldots,y_n$ are
the desired $e_{\sigma} S$-basis of $e_{\sigma} \N$.
\end{proof}

The following corollary is immediate.

\begin{cor}\label{cor:dd mod p} Let $\overline{\N}$ denote the Breuil module with descent data 
  corresponding to the $\pi_E$-torsion in the Barsotti-Tate group
  corresponding to $\rho\,|_{G_K}$.   If $\tau = \chi_1 \oplus \cdots \oplus
  \chi_n$, then $\overline{\N}$ has a $(k \otimes k_E)[u]/(u^{e'p})$-basis
  $\overline{v}_1, \ldots, \overline{v}_n$ such that $\widehat{g}
  \cdot \overline{v}_i = (1 \otimes \overline{\chi}_i(g))
  \overline{v}_i$ for all $g \in \Gal(K/L)$.  (Here $e'$ is the absolute ramification index of $K$.)
\end{cor}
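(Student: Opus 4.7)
The plan is that this corollary should be essentially immediate from Proposition~\ref{prop:diagonal dd} by reducing modulo $\m_E$, so my proposal is short. First I would apply Proposition~\ref{prop:diagonal dd} to the strongly divisible module $\N$ to produce an $S$-basis $v_1,\ldots,v_n$ of $\N$ on which the descent data acts by $\widehat{g}\cdot v_i = (1\otimes \chi_i(g))\, v_i$ for all $g\in\Gal(K/L)$.

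Next I would use the description of $\overline{\N}$ recalled in Section~\ref{sec:breuil-modules-with}, namely that as an object of $\BrMod_{\dd,L}$ it is obtained from $\N$ by base change along the natural surjection $S\to (k\otimes_{\F_p}k_E)[u]/(u^{e'p})$ (i.e.\ reduction mod $\m_E$ followed by the standard identification). Since this base change is functorial, the images $\overline{v}_i$ of the $v_i$ in $\overline{\N}$ form a $(k\otimes_{\F_p}k_E)[u]/(u^{e'p})$-basis, and the descent-data action on $\overline{\N}$ is induced from that on $\N$. The identity $\widehat{g}\cdot v_i = (1\otimes \chi_i(g))\, v_i$ therefore descends to $\widehat{g}\cdot \overline{v}_i = (1\otimes \overline{\chi}_i(g))\, \overline{v}_i$, because the reduction map $\OO_E\to k_E$ sends $\chi_i(g)$ to $\overline{\chi}_i(g)$.

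There is really no obstacle here: all the substantive content is contained in Proposition~\ref{prop:diagonal dd}, and one only needs to observe that producing a diagonal basis is compatible with the passage from strongly divisible modules to their associated mod $\m_E$ Breuil modules. If there is any subtlety it is purely bookkeeping, ensuring that one uses the convention (explicitly adopted in the paper) whereby $\overline{\N}$ denotes the Breuil module $(k\otimes_{\F_p}k_E)[u]/(u^{e'p})\otimes_{S/\m_E S}(\N/\m_E \N)$ rather than the literal $S/\m_E S$-module $\N/\m_E\N$, so that the $\overline{v}_i$ produced really do form a basis over the correct ring.
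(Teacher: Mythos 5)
Your proposal is correct and matches the paper's treatment: the paper simply states that the corollary is immediate from Proposition~\ref{prop:diagonal dd}, exactly because the diagonal $S$-basis $v_1,\ldots,v_n$ reduces modulo $\m_E$ to a basis of $\overline{\N}$ on which the descent data acts through the reductions $\overline{\chi}_i$. Your added remark about using the convention that $\overline{\N}$ means $(k\otimes_{\F_p}k_E)[u]/(u^{e'p})\otimes_{S/\m_E S}(\N/\m_E\N)$ is precisely the bookkeeping point the paper relies on (via the compatibility of $T_{\st,2}^{L}$ with reduction recalled in Section~\ref{sec:breuil-modules-with}), so nothing is missing.
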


Let $e_i \in k \otimes k_E$ be one of our usual idempotents.  Since
the descent data fixes $e_i$,  we see in particular that the descent data acts via $\overline{\chi}_1 \oplus
\cdots \oplus \overline{\chi}_n$ on each piece $e_i \overline{\N}$ of $\overline{\N}$.

\section{Local lifts}\label{sec:local lifts}

\subsection{Lifts of certain rank two Breuil modules}

We continue to use the following notation from Sections
\ref{sec:necess-cond-notat} to \ref{ss:irreducible}.  Let  $L$ be the unramified
quadratic extension of $F_{\p}$, and $K$ the splitting field of
$u^{p^{2f}-1}-\pi_{\p}$ over $L$.  Let $\varpi$ be a choice of
$\pi_{\p}^{1/(p^{2f}-1)}$ in $K$.  Let $k_0$ and $k$ denote the
residue fields of  $F_{\p}$ and $K$ respectively.  If
$g\in\Gal(K/F_{\p})$ then we define $\eta(g) = g(\varpi)/\varpi \in
W(k)$, so that $\overline{\eta}(g)$ is the image of $\eta(g)$ in $k$. Suppose that $F_{\p}$ has
absolute ramification index $e$, and write $e' = (p^{2f}-1)e$.

Let $E_0(u)$ be an Eisenstein polynomial for $\pi_{\p}$, so that
$E(u)=E_0(u^{p^{2f}-1})$ is an Eisenstein polynomial for $\varpi$.  Write $E(u) =
u^{e'} + pF(u)$; then $F(u)$ is a polynomial in $u^{p^{2f}-1}$ over $W(k_0)$ whose constant
term is a unit.

Let $E$, a finite extension of $\Qp$, denote the coefficient field for
our representations, with integer ring $\OO_E$ and maximal ideal
$\m_E$.  Enlarging $E$ if necessary, we assume that a
Galois closure of $K$ embeds into $E$.  In particular 
$E$ is ramified and $W(k)$ embeds into $E$.
Let $k_E$ denote the residue field of $E$.   Write $S = S_{K,\OO_E}$
 (notation as in \cite[Sec. 4]{sav04}).  Recall that $\phi : S
 \rightarrow S$ is the $W(k)$-semilinear, $\OO_E$-linear map sending $u
 \mapsto u^{p}$.  The group $\Gal(K/F_{\p})$ acts $W(k)$-semilinearly on $S$
 via $g \cdot u = ({\eta}(g)\otimes 1)u$.
 Set $c = \frac{1}{p} \phi(E(u))\in S^{\times}$.   Let $\varphi
\in \Gal(K/F_{\p})$ denote the element fixing $\varpi$ and acting
nontrivially on $L$, so that $\varphi^{-1} g \varphi = g^q$ for $g \in
\Gal(K/L)$.

We now define a rank two Breuil module $\overline{\N}$ over $(k \otimes
k_E)[u]/(u^{e'p})$ with descent data from $K$ to $F_{\p}$ with generators $\bar{v}$ and $\bar{w}$, as
follows.  Choose $J \subset \{0,\ldots,2f-1\}$, and set $\chi_i =
\overline{\chi}$ if $i \in J$ and $\chi_i = \overline{\chi}^q$
otherwise.  For each $i$, choose $r_i \in [0,e']$ such that $\chi_{i+1} = \chi_i
\eta_i^{r_i}$.  (This is equivalent to choosing an allowable $x_i$ for
$J$ and $\overline{\chi}$.)  Set $r'_i = e' - r_i$ and $\chi'_i =
\chi_i^q$, and note that $\chi_{i+1}' = \chi_i' \eta_i^{r_i'}$ since
each $r_i$ is divisible by $q-1$.  We define $\overline{\N}$ as follows.
\begin{itemize}
\item $\Fil^1 \overline{\N}$ is generated by  $\sum_{i=0}^{2f-1} u^{r_i} e_i
  \bar{v}$ and $\sum_{i=0}^{2f-1} u^{r'_i} e_i  \bar{w}$.  

\item $\phi_1(u^{r_i} e_i \bar{v}) = (1\otimes \gamma_i) e_{i+1}
  \bar{v}$ and $\phi_1(u^{r'_i} e_i \bar{w}) = (1\otimes \gamma'_i) e_{i+1}
  \bar{w}$. 

\item $\widehat{g}(e_i \bar{v}) = (1 \otimes \chi_i(g))(e_i \bar{v})$ and
  $\widehat{g}(e_i \bar{w}) = (1 \otimes \chi_i(g)^q)(e_i \bar{w})$ for
  $g \in \Gal(K/L)$.
\end{itemize}
Here $\gamma_i, \gamma_i' \in k_{E}^{\times}$.
Finally, we assume that one of
the following two sets of additional conditions holds: 
\begin{equation}\tag{RED}
\begin{cases} i \in J \ \text{if and only if} \ i+f \not\in J \\
\chi_{i+f} = \chi_i^q \\
 r_i = r_{i+f} \ \text{and} \ r'_i = r'_{i+f} \\
\gamma_i = \gamma_{i+f} \ \text{and} \ \gamma'_i = \gamma'_{i+f} \\
\widehat{\varphi}(\bar{v}) = \bar{v} \ \text{and} \
\widehat{\varphi}(\bar{w}) = \bar{w}
\end{cases}
\end{equation}
or
\begin{equation}\tag{IRR}
\begin{cases}  i \in J \ \text{if and only if} \ i+f \in J \\
\chi_{i+f} = \chi_i \\
 r_i  = r'_{i+f} \\
\gamma_i  = \gamma'_{i+f} \\
\widehat{\varphi}(\bar{v}) = \bar{w} \ \text{and} \
\widehat{\varphi}(\bar{w}) = \bar{v}.
\end{cases}
\end{equation}

The second line in each set of conditions is equivalent to the first.
Note that the relation $\widehat{\varphi}^{-1} \circ \widehat{g} \circ
\widehat{\varphi} = \widehat{g}^q$ holds in either case, and so the
module $\overline{\N}$ so-defined is indeed a Breuil module with
descent data from $K$ to $F_{\p}$.  Since $r'_{i+f} = e'-r_{i+f}$, in case
(IRR) the condition $r_i
= r'_{i+f}$ is equivalent to $x_i + x_{i+f} = e$.

\begin{thm}\label{thm: construction of the strongly divisible modules}
For each Breuil module $\overline{\N}$ as above, the generic fibre $\rhobar$ of  $\overline{\N}$ lifts to a parallel
potentially Barsotti-Tate representation $\rho$  with inertial
  type $\chi \oplus \chi^q$.
% Each Breuil module $\overline{\N}$ as above lifts to such that $\rho :=
%  T_{\st,2}^{F_{\p}}(\N)[1/p]$ is a parallel potentially Barsotti-Tate
%  representation with inertial
%  type $\chi \oplus \chi^q$.  (Since $K/F_{\p}$ is tame, the
%  determinant $\det(\rho)$  is in fact the
%  product of the cyclotomic character, a finite order character of
%  order prime to $p$, and an unramified character.)
\end{thm}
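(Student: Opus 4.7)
The plan is to construct, for each of the Breuil modules $\overline{\N}$ above, an explicit strongly divisible $S$-module $\N$ with $\OO_E$-coefficients and descent data from $K$ to $F_{\p}$ lifting $\overline{\N}$; the desired Galois representation will then be $\rho = T_{\st,2}^{F_{\p}}(\N) \otimes_{\OO_E} E$. I take $\N$ to be the free rank-two $S$-module on generators $v, w$ lifting $\bar{v}, \bar{w}$, and define $\Fil^1 \N$ to be the $S$-submodule generated by $E(u)\N$ together with $\sum_i u^{r_i} e_i v$ and $\sum_i u^{r_i'} e_i w$; the map $\phi_1$ is defined on these two generators by lifting the Breuil-module formulas via any choice of unit lifts $\tilde\gamma_i, \tilde\gamma_i' \in (W(k) \otimes_{\Zp} \OO_E)^{\times}$ of $\gamma_i, \gamma_i'$. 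Descent data for $\Gal(K/L)$ is then given by $\widehat{g}(e_i v) = (1 \otimes \chi_i(g)) e_i v$ and $\widehat{g}(e_i w) = (1 \otimes \chi_i(g)^q) e_i w$, where $\chi_i$ is the Teichm\"uller lift of $\overline{\chi}_i$ and the action is extended by the usual twisted semilinearity over $S$; finally I set $\widehat{\varphi}(v) = v$, $\widehat{\varphi}(w) = w$ in case (RED), and $\widehat{\varphi}(v) = w$, $\widehat{\varphi}(w) = v$ in case (IRR), imposing the constraints $\tilde\gamma_i = \tilde\gamma_{i+f}$ and $\tilde\gamma_i' = \tilde\gamma_{i+f}'$ in (RED), and $\tilde\gamma_i = \tilde\gamma_{i+f}'$ in (IRR), mirroring the conditions on their reductions.

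Next I would verify that $\N$ is strongly divisible with descent data. The containment $\Fil^1 S \cdot \N \subset \Fil^1 \N$ is immediate from the definition, while the requirement that $\phi_1(\Fil^1 \N)$ generate $\N$ holds modulo $\m_E$ (as $\overline{\N}$ is by hypothesis a Breuil module) and hence over $S$ by Nakayama's lemma. The monodromy operator $N$ is taken to be zero, which is consistent because $\Fil^1 \N$ is generated by elements in $\N$ itself (no correction terms in higher powers of $u$ are needed in the Barsotti-Tate case). The main content of this step is verifying integrally that $\widehat{\varphi} \circ \widehat{g} = \widehat{g^q} \circ \widehat{\varphi}$ for $g \in \Gal(K/L)$, and that $\widehat{\varphi}$ commutes with $\phi_1$ and preserves $\Fil^1$: these are exactly the conditions encoded by (RED) and (IRR) once the $\tilde\gamma_i,\tilde\gamma_i'$ are chosen to satisfy the analogous constraints (which is possible as all constraints involve only finitely many independent units lifting already-compatible residues). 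The computation is more elaborate than in \cite{geesavitttotallyramified} because we make no hypothesis on $e$ or $f$, but each identity reduces to a direct index-by-index check.

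Finally, apply $T_{\st,2}^{F_{\p}}$ to $\N$ to obtain a Galois-stable $\OO_E$-lattice in a potentially Barsotti-Tate representation $\rho:G_{F_{\p}}\to\GL_2(E)$ which becomes Barsotti-Tate over $K$ and whose mod-$\m_E$ reduction is the generic fibre $\rhobar$ of $\overline{\N}$, using the compatibility of $T_{\st,2}^{F_\p}$ with reduction recalled in Section \ref{sec:breuil-modules-with}. By Proposition~\ref{prop:diagonal dd} applied to $\N$ viewed with descent data from $K$ to $L$, the restriction $\rho|_{I_L}$ has inertial type $\chi \oplus \chi^q$; in case (RED) the decomposition is $\Gal(L/F_{\p})$-equivariant and descends directly, while in case (IRR) the involution $\widehat{\varphi}$ swaps the two isotypic components, producing an irreducible inertial representation of $I_{F_{\p}}$ whose restriction to $I_L = I_{F_{\p}}$ remains $\chi \oplus \chi^q$. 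The Hodge--Tate weights read off from the filtration are $\{0,1\}$ at every embedding, and the determinant has the required form by the product computation of Lemma~\ref{lem:computationofpsi}, so $\rho$ is parallel in the sense of Definition~\ref{defn:barsotti tate lifts}. The anticipated main obstacle is the bookkeeping required for strong divisibility and descent-data compatibility in full generality (arbitrary $e$ and $f$), particularly ensuring that in case (IRR) the unit lifts $\tilde\gamma_i,\tilde\gamma_i'$ can be chosen consistently with the swap imposed by $\widehat{\varphi}$; this ultimately reduces to matching pairs of independent units indexed by $i \in \{0,\ldots,f-1\}$.
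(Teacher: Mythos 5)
There is a genuine gap, and it sits exactly at the central difficulty of the theorem. You take $\Fil^1 \N$ to be generated by $(\Fil^1 S)\N$ together with the naive lifts $\sum_i u^{r_i} e_i v$ and $\sum_i u^{r'_i} e_i w$, and then ``define $\phi_1$ on these generators by lifting the Breuil-module formulas''. But $\phi_1$ is not freely assignable on generators: it must be $\tfrac{1}{p}\phi|_{\Fil^1 \N}$ for a $\phi$-semilinear map $\phi$ defined on all of $\N$, and your prescription forces $u^{pr_i}\phi(e_i v) = p\widetilde{\gamma}_i e_{i+1} v$. Pushing this identity through the map $S \to \OO_K \otimes_{\Zp}\OO_E$, $u \mapsto \varpi\otimes 1$, and comparing valuations in an embedding component (recall $\varpi^{e'} = -pF(\varpi)$ with $F(\varpi)$ a unit) shows that no such $\phi(e_iv) \in \N$ can exist unless $pr_i \le e'$, which fails for many of the Breuil modules the theorem covers. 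Even setting the existence of $\phi$ aside, your filtration produces the wrong Hodge filtration: under $f_{\varpi}$ the elements $u^{r_i}e_i v$ and $u^{r'_i}e_i w$ map to unit multiples of $e_\tau v$ and $e_\tau w$ in every embedding component $e_\tau D_K$, so $\Fil^1 D_K$ would be all of $D_K$ rather than free of rank one, contradicting the parallel labeled Hodge--Tate weights $\{0,1\}$ you claim to ``read off''. The proof has to choose $\Fil^1\N$ much more carefully: when $\chi_i = \chi_{i+1}$ one takes a monic factorization $G_iH_i = \widetilde{\sigma}_i(E(u))$ with $G_i,H_i$ polynomials in $u^{p^{2f}-1}$ of degrees $r_i, r'_i$ and uses generators $(1\otimes G_i(u))e_i v$, $(1\otimes H_i(u))e_i w$ (this is why $E$ is assumed to contain a Galois closure of $K$); when $\chi_i \neq \chi_{i+1}$ one takes a factorization $y_i z_i = -p\widetilde{\sigma}_i(F(u))$ with $y_i$ or $z_i$ in $\m_E$ (this is why $E$ is assumed ramified) and uses the entangled generators $e_i(u^{r_i}v + (1\otimes y_i)w)$ and $e_i((1\otimes z_i)v + u^{r'_i}w)$, whose ``determinant'' is $\widetilde{\sigma}_i(E(u))$. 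These generators still reduce to yours modulo $\m_E$ (monic factors of an Eisenstein polynomial reduce to powers of $u$, and $y_i,z_i$ reduce to $0$), but unlike yours they allow an integral, well-defined $\phi$ and make $e_\tau \Fil^1 D_K$ free of rank one at every embedding. None of this appears in your proposal, so the main construction is missing and the surrounding verifications cannot repair it.

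Two smaller points. The monodromy operator cannot simply be ``taken to be zero'': $N$ must satisfy the Leibniz rule over $S$ and the compatibility axioms with $\Fil^1$ and $\phi_1$, and the argument instead invokes Breuil's existence-and-uniqueness result for $N$ on the underlying strongly divisible $\Zp$-module, deducing $\OO_E$-linearity and compatibility with descent data from uniqueness. Also, parallelism is not a consequence of Lemma~\ref{lem:computationofpsi} (a mod-$p$ statement); one checks the rank-one property of $\Fil^1 D_K$ directly as above and then twists $T^{F_{\p}}_{\st,2}(\N)[1/p]$ by an unramified character with trivial reduction to make the determinant of the required form.
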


\begin{proof} We will show that $\overline{\N}$ lifts to a
 strongly divisible module $\N$ with $\OO_E$-coefficients and tame
 descent data from $K$ to $F_{\p}$ such that $\rho' :=
 T_{\st,2}^{F_{\p}}(\N)[1/p]$ is a  potentially Barsotti-Tate
representation with inertial type $\chi \oplus \chi^q$ and with all its pairs
of labeled Hodge-Tate weights equal to $\{0,1\}$.  The representation
$\rho'$ need not be parallel, but since $\det(\rho')$ is the product of the cyclotomic character, a finite order
character of order prime to $p$, and an unramified character,  we
may take $\rho$ to be the twist of $\rho'$ by a suitable unramified
character with trivial reduction mod $p$.  

Let $\N$ be the free $S$-module generated by $v$ and
  $w$; as one would imagine, $v, w$ will lift $\bar{v},\bar{w}$
  respectively.  Let $e_i \in W(k) \otimes_{\Zp} \OO_E$ also denote
  the idempotent lifting $e_i \in k \otimes_{\Fp} k_E$, and let
  $\widetilde{\sigma}_i : K_0 \hookrightarrow E$ be the embedding that
  lifts $\sigma_i$.  We define
  $\Fil^1 \N$ to be the sum of $(\Fil^1 S)\N$ and the
   submodules $\N'_i$ of $e_i \N$ defined for each $0 \le i < 2f$ as
   follows. 

If $\chi_i = \chi_{i+1}$, let $g_i h_i$ be a monic factorization of
$\widetilde{\sigma}_i (E_0(u))$
in $\OO_E[u]$ such that $\deg(g_i) = r_i/(p^{2f}-1)$ and  $\deg(h_i) =
r'_i/(p^{2f}-1)$.   This is where we
use
our hypothesis that $E$ contains all conjugates of $\pi_{\p}$ over $\Qp$.  Take $G_i = g_i(u^{p^{2f}-1})$ and $H_i =
h_i(u^{p^{2f}-1})$, so that $G_i H_i = \widetilde{\sigma}_i(E(u))$, and let $\N'_i$ be the
$S$-module generated by $(1 \otimes G_i(u)) e_i v$ and $(1 \otimes H_i(u)) e_i w$.

If $\chi_i \neq \chi_{i+1}$ let $y_i z_i$ 
be a factorization of $-p
\widetilde{\sigma}_i (F(u))$ in $\OO_E[u]$ such that $y_i \in \m_E$ and $z_i \in \m_E[u]$ or vice-versa.  This is where we use
our hypothesis that $E$ is ramified.  Take $\N'_i$ to be
the $S$-module generated by $e_i(u^{r_i} v + (1 \otimes y_i) w)$ and
$e_i((1 \otimes z_i) v + u^{r'_i} w)$.   

We impose the following extra conditions.  If $\overline{\N}$
satisfies (RED), then we insist that $G_{i+f} = G_i$, $H_{i+f} = H_i$,
$y_{i+f} = y_i$, and $z_{i+f} = z_i$; on the other hand if
$\overline{\N}$ satisfies (IRR), then we require $G_i = H_{i+f}$ and
$y_i = z_{i+f}$.   Note that this is possible
because $E(u)$ is defined over $W(k_0)$, so that
$\widetilde{\sigma}_{i+f}(E(u)) = \widetilde{\sigma}_i(E(u))$.

We define descent data from $K$ to $F_{\p}$ on $\N$ as follows,
semilinearly with respect to the action of $\Gal(K/F_{\p})$ on $S$.
Let $\widetilde{\chi}_i$ be the Teichm\"uller lift of $\chi_i$.
\begin{itemize}
\item If $g\in\Gal(K/L)$, set
$\widehat{g} (e_i v) = (1\otimes \widetilde{\chi}_i(g)) e_i v$ and $\widehat{g} (e_i w)
= (1\otimes \widetilde{\chi}_i(g)^q) e_i w$.

\item  If $\overline{\N}$ satisfies (RED) then
set $\widehat{\varphi}(v) = v$ and $\widehat{\varphi}(w) = w$.

\item If
$\overline{\N}$ satisfies (IRR) then set $\widehat{\varphi}(v) = w$
and $\widehat{\varphi}(w) = v$.
\end{itemize} In either of the last two cases, using the fact that
$\varphi$ acts trivially on $\OO_E$ and takes $e_i \mapsto e_{i+f}$,
one checks that $\widehat{\varphi}^{-1} \widehat{g} \widehat{\varphi}
= \widehat{g}^q$, so that this descent data extends to
$\Gal(K/F_{\p})$ in a well-defined way.  One checks with little
difficulty from the
definition of $\Fil^1 \N$ and the conditions on $\overline{\N}$ that
this descent data preserves $\Fil^1 \N$.  (Note in particular that
 $\Gal(K/F_{\p})$ acts trivially on $G_i,H_i,y_i,z_i$ since they are all polynomials in $u^{p^{2f}-1}$.)

Finally we wish to define a map $\phi : \N \rightarrow \N$, semilinear
with respect to $\phi$ on $S$ and such that $\phi_1 = \frac{1}{p} \phi|_{\Fil^1 \N}$ is
well-defined and satisfies
\begin{gather}
\label{phi-v-eq} \phi_1((1 \otimes G_i(u)) e_i v) = \widetilde{\gamma}_i e_{i+1} v \\
\label{phi-w-eq} \phi_1((1 \otimes H_i(u)) e_i w) = \widetilde{\gamma}'_i e_{i+1} w 
\end{gather}
if $\chi_i = \chi_{i+1}$ and 
\begin{gather}
\label{phi-v-neq} \phi_1(e_i(u^{r_i} v + (1 \otimes y_i) w)) = \widetilde{\gamma}_i
e_{i+1} v \\
\label{phi-w-neq} \phi_1(e_i((1 \otimes z_i) v + u^{r'_i} w)) = \widetilde{\gamma}'_i
 e_{i+1} w 
\end{gather}
if $\chi_i \neq \chi_{i+1}$.
Here $\widetilde{\gamma}_i, \widetilde{\gamma}'_i$ are lifts of
$\gamma_i, \gamma_i'$ to $\OO_E^{\times}$ that satisfy
$\widetilde{\gamma}_{i+f} = \widetilde{\gamma}_i$ and
$\widetilde{\gamma}'_{i+f} = \widetilde{\gamma}'_i$ in case (RED) and
$\widetilde{\gamma}'_{i+f} = \widetilde{\gamma}_i$ in case (IRR).

If $\chi_i = \chi_{i+1}$ we may satisfy \eqref{phi-v-eq} and
\eqref{phi-w-eq} by setting 
\begin{gather*} \phi(e_i v) =
c^{-1} \phi(1 \otimes H_i(u)) \widetilde{\gamma}_i e_{i+1} v \\
\phi(e_i w) =
c^{-1} \phi(1 \otimes G_i(u)) \widetilde{\gamma}'_i e_{i+1} w.\end{gather*}

If $\chi_i \neq \chi_{i+1}$, then since 
$$(E(u) \otimes 1) e_i v = u^{r'_i} (e_i(u^{r_i} v + (1 \otimes y_i) w))  - y_i
(e_i((1 \otimes z_i) v + u^{r'_i} w))$$ and similarly for $(E(u)
\otimes 1) e_i
w$, we should set
\begin{gather*} \phi(e_i v) =
c^{-1} e_{i+1} (u^{p r'_i} \widetilde{\gamma}_i v - \phi(y_i)
\widetilde{\gamma}'_i w) \\
 \phi(e_i w) =
c^{-1} e_{i+1} (u^{p r_i} \widetilde{\gamma}'_i w - \phi(z_i)
\widetilde{\gamma}_i v) .
\end{gather*}
Extending this map additively $\phi$-semilinearly to all of $\N$, one checks that
equations \eqref{phi-v-eq}--\eqref{phi-w-neq} hold, so that
$\phi(\Fil^1 \N)$ is contained in $p\N$ and generates it over $S$. One
checks futhermore that $\phi$ commutes with the descent data on $\N$
that was constructed in preceding paragraphs.  

It is now evident that $(\N, \Fil^1 \N, \phi)$ with the given descent
data is a lift of $\overline{\N}$.  It remains to check that $\N$ satisfies
the rest of the axioms of a strongly divisible module with
coefficients and descent data (namely,
conditions (2), (5)--(8), and (12) of \cite[Def. 4.1]{sav04}) and to prove
our claims about the representation $\rho'$.  

To check that $\Fil^1 \N \cap I\N = I\Fil^1 \N$ for an ideal $I
\subset \OO_E$, observe that it
suffices to check separately for each $i$ that $e_i \Fil^1 \N \cap e_i
I\N = e_i I \Fil^1 \N$.  If $\chi_i \neq \chi_{i+1}$ then this is
follows by exactly the same argument as in the proof of \cite[Thm.
6.5]{geesavitttotallyramified} (the algebraic claim being made is
literally identical).  If $\chi_i = \chi_{i+1}$ then the argument is
even easier.  Each coset in $e_i(\Fil^1 \N/(\Fil^1 S)\N)$ has a
representative of the form $e_i(a G_i v + b H_i w)$ with $a,b \in
\OO_E[u]$ such that
$\deg(a) < \deg(H_i)$ and $\deg(b) < \deg(G_i)$: terms of higher
degree can be absorbed into $(\Fil^1 S)\N$ by using the relation $E(u)
\otimes 1 = 1 \otimes G_i H_i$ in $e_iS$.  If $a G_i e_i v
+ b H_i e_i w + s_1 v + s_2 w$  lies in $e_i I\N$ (with $s_1,s_2 \in
e_i \Fil^1 S$)  then $a G_i e_i+ s_1$ must lie in $e_i I S$; the
same must be true of $a G_i e_i$ and $s_1$ individually since they
have no terms in common of the same degree in their unique expansions of the
form $\sum_{j \ge 0} q_j(u) E(u)^j/j!$ with $\deg(q_j) < \deg(E(u))$.  Then since $G_i$ is
monic the coefficients of $a$ must lie in $I$.  Similarly $s_2 \in e_i
IS$ and the coefficients of $b$ lie in $I$.  It follows that  $a G_i e_i v
+ b H_i e_i w + s_1 v + s_2 w$ actually lies in $e_i I \Fil^1 \N$. 

As for the axioms (5)--(8) and (12) of \cite[Def. 4.1]{sav04} concerning
the monodromy operator $N$, again we appeal to arguments in the
proof of \cite[Thm.
6.5]{geesavitttotallyramified}: ignoring the action of $\OO_E$ and the descent data and regarding
$(\N, \Fil^1 \N, \phi)$ simply as a strongly divisible $\Zp$-module
over $K$,
it follows from \cite[Prop. 5.1.3(1)]{bre00} that there exists
a \emph{unique} $W(k) \otimes \Zp$-endomorphism $N : \N \rightarrow \N$ satisfying
axioms (5)--(8) of \cite[Def. 4.1]{sav04}, except that we have
axiom (5) only with respect to $s \in S_{K,\Zp}$ until we know that
$N$ commutes with the action of $\OO_E$.  This commutativity, as well as
the commutativity between $N$ and the descent data (axiom (12)),
follows by uniqueness of the operator $N$.   This completes the proof
that $\N$ is a strongly divisible module.

As before, set $\rho' =  T_{\st,2}^{F_{\p}}(\N)[1/p]$, the potentially
Barsotti-Tate Galois representation associated to $\N$, and let $D:=D_{\st,2}^{K}(\rho')$.  The claim
that the Galois type of~$\rho'$ is $\chi \oplus \chi^q$ follows directly by 
Proposition \ref{prop:diagonal dd} applied with respect to the $S$-basis
$v' =
\sum_{i\in J} e_i v + \sum_{i \not\in J} e_i w$ and $w' =
\sum_{i \not\in J} e_i v + \sum_{i \in J} e_i w$ of $\N$.  

The last thing to verify is that all pairs of
labeled Hodge-Tate weights of $\rho'$ are $\{0,1\}$.  Recall that if we regard $K \otimes_{\Qp} E$
as an $S[1/p]$-algebra via the map $u \mapsto \varpi \otimes 1$, then by
\cite[Prop. 6.2.2.1]{BreuilGriffiths} there is an isomorphism 
$$f_{\varpi}: (K \otimes_{\Qp} E) \otimes_{S[1/p]} \N[1/p] \cong D_{K} := {K}
\otimes_{W(k)[1/p]} D$$
that identifies the filtrations on both sides.   It follows that
$D_{K}$ is the free $(K \otimes_{\Qp} E)$-module generated by
$f_{\varpi}(v)$ and $f_{\varpi}(w)$, and we need to show
that $\Fil^1 D_K$ is a free submodule of rank one in $D_K$.  It
suffices to check for each $i$ that $e_i \Fil^1 D_K$ is a free $e_i(K \otimes_{\Qp}
E)$-submodule of rank
one in $e_i D_K$.  If $\chi_i \neq \chi_{i+1}$ then this follows as in
the last paragraph of the proof of \cite[Prop.
6.6]{geesavitttotallyramified}: the images of $e_i(u^{r_i} v + y_iw)$ and
$e_i(z_i v + u^{r'_i}w)$ under $f_{\varpi}$ are scalar multiples of one
another, and each generates a free submodule of rank one in $e_i
\Fil^1 D_K$.

For the case $\chi_i = \chi_{i+1}$, we note that each of our
idempotents $e_i \in K \otimes_{\Qp} E$ decomposes as a sum of
idempotents $e_{\tau}$, where $\tau$ ranges over the $e(p^{2f}-1)$  embeddings $K
\hookrightarrow E$ extending $\widetilde{\sigma}_i$.   Since 
$\varpi \otimes 1 = 1 \otimes \tau(\varpi)$ in
$e_{\tau}(K\otimes_{\Qp} E)$, we deduce that the $e_{\tau}$-component
of  $f_{\varpi}(G_i(u) e_i v)$ is nonzero precisely for those $\tau$
such that the root
$\tau(\varpi)$ of $\widetilde{\sigma}_i(E(u))$ is not a root of
$G_i(u)$, and similarly the 
$e_{\tau}$-component of $f_{\varpi}(H_i(u) e_i w)$ is nonzero for
those $\tau$ such that
$\tau(\varpi)$ is not a root of $H_i(u)$.  It
follows that $e_i\Fil^1 D_K$ is free of rank one, generated by the
image of $e_i(G_i(u) v + H_i(u) w)$ under $f_{\varpi}$.
 \end{proof}

\section{An explicit description of the set of weights}\label{sec: explicit
  local description of the weights}
We maintain the notation of the previous three sections, so that $F$
is totally real and $\p|p$ is a place of $F$. The results of
sections~\ref{sec: necessity} and~\ref{sec:local lifts} may be
combined to give a complete description of when a semisimple
2-dimensional mod $p$ representation of $G_{F_\p}$ admits a parallel
potentially Barsotti-Tate lift of type $\chi\oplus\chi^{q}$ with
$\chi\neq\chi^q$. In turn, this furnishes an explicit description of
the conjectural set of weights for a  global representation
whose restriction
to each decomposition group above $p$ is semisimple.

\begin{thm}\label{thm: explicit description of when pot BT lifts exist}Write $\overline{\chi}=\prod_{i=0}^{2f-1}\eta_i^{c_i}$, with
  $0\le c_i\le p-1$. Assume
  that the local representation $\rhobar$ is semisimple. Then $\rhobar$
  has a parallel potentially Barsotti-Tate lift of type
  $\chi\oplus\chi^{q}$ if and only if one of the following three
  possibilities holds.
  \begin{enumerate}
  \item $e\ge p-1$ and $\det
\rhobar|_{I_{F_\p}}=\epsilon \cdot \overline{\chi}^{q+1}$.
\item
  $\rhobar\cong\begin{pmatrix}\psi_1&0\\0&\psi_2\end{pmatrix}$
  is decomposable, with $(\psi_1\psi_2)|_{I_{F_\p}}=\epsilon \cdot
  \overline{\chi}^{q+1}$, and \[\psi_1|_{I_{F_{\p}}}=\prod_{i\in
    J}\omega_{\pi(i)}^{c_i}\prod_{i=0}^{f-1}\omega_i^{x_i}\]
where  $J \subset
\{0,\ldots,2f-1\}$ is a subset with $i+f \in J$ if and only if $i \not\in
J$,  where the $x_i$ are allowable for $J$ and $\overline{\chi}$ (as
in Definition~\ref{defn:allowable}), and where $\pi$ is the natural projection from $\Z/2f\Z$ to $\Z/f\Z$.

% \begin{itemize}
% \item If  $i\in J$ and $i+1\in J$, or if $i\notin J$ and
% $i+1\notin J$, then we may take any $0\leq x_i\leq e$.
% \item In the other cases we allow either $0\leq x_i\leq e-1$ or $1\leq
%  x_i\leq e$. If $i\in J$ and $i+1\notin J$, then the allowable range
%  is $0\leq x_i\leq e-1$ precisely when there exists $j\geq 1$ with
%  $c_{i+k}=c_{i+k+f}$ for all $1\leq k<j$ and $c_{i+j+f}>c_{i+j}$, and
%  the situation is reversed in the case $i\notin J$ and $i+1\in J$.
% \end{itemize}

\item $\rhobar$ is irreducible, and
  $\rhobar|_{I_{F_\p}}\cong\begin{pmatrix}\psi&0\\0&\psi^q\end{pmatrix}$
  with \[\psi=\prod_{i\in
  J}\eta_{i}^{c_i}\prod_{i\notin
  J}\eta_i^{c_{i+f}}\prod_i\eta_i^{x_i}\] where $J \subset
\{0,\ldots,2f-1\}$ is a subset with $i+f \in J$ if and only if $i \in
J$, and where the $x_i$ are allowable for $J$ and $\overline{\chi}$ (as
in Definition~\ref{defn:allowable}) and satisfy $x_i + x_{i+f} =
e$.

% in addition to the following condition:
% \begin{itemize}
% \item If $i\in J$ and $i+1\notin J$, then we allow $0\leq x_i\leq e-1$
%   precisely when there is a $j\geq 1$ with $c_{i+k}=c_{i+k+f}$ for all
%   $1\leq k<j$ and $c_{i+j+f}>c_{i+j}$, and we otherwise allow $1\le
%   x_i\le e$. The situation is reversed in the case $i\notin J$ and
%   $i+1\in J$.
% \end{itemize}
  \end{enumerate}
\end{thm}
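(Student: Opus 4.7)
The plan is to synthesize the necessity results of Section~\ref{sec: necessity} with the sufficiency construction of Section~\ref{sec:local lifts}, splitting into the reducible and irreducible cases for $\rhobar$.

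\emph{Only if direction.} Suppose $\rhobar$ is semisimple and admits a parallel potentially Barsotti-Tate lift of type $\chi \oplus \chi^q$. Lemma~\ref{lem:determinant of modular of some weight} immediately gives the determinant condition $\det \rhobar|_{I_{F_\p}} = \epsilon \cdot \overline{\chi}^{q+1}$. If $\rhobar$ is reducible (hence decomposable), take the scheme-theoretic closure of $\psi_1$ inside the Barsotti-Tate group scheme, and apply the analysis of Section~\ref{ss:reducible} to the resulting Breuil module $\M$: combining Corollary~\ref{cor:dd mod p}, Proposition~\ref{prop:rank one breuil modules}, and Lemma~\ref{lem:computationofpsi} yields the formula~(\ref{eq:niveau one formula}) for $\psi_1|_{I_{F_\p}}$, which is precisely condition~(2). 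If $\rhobar$ is irreducible, Proposition~\ref{prop:irreduciblecase} directly yields condition~(3).

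\emph{If direction.} Suppose condition~(2) holds. Starting from the data $J$ and $x_0,\ldots,x_{f-1}$, extend by setting $x_{i+f} = x_i$ for $0 \le i < f$; as remarked in Section~\ref{ss:reducible}, $x_{i+f}$ is allowable exactly when $x_i$ is, so the extended list satisfies the (RED) conditions of Section~\ref{sec:local lifts}. Choose any units $\gamma_i, \gamma_i' \in k_E^{\times}$ with $\gamma_{i+f} = \gamma_i$ and $\gamma'_{i+f} = \gamma'_i$; by Corollary~\ref{cor:chars for rank one breuil modules}, the $\gamma_i$ (respectively $\gamma_i'$) can be chosen so that the generic fibre of the rank-one subobject (respectively quotient) of the resulting Breuil module $\overline{\N}$ equals $\psi_1$ (respectively $\psi_2$) on the nose, not merely on inertia. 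Then Theorem~\ref{thm: construction of the strongly divisible modules} lifts $\overline{\N}$ to a parallel potentially Barsotti-Tate representation of type $\chi \oplus \chi^q$ with mod~$p$ reduction $\rhobar$. If condition~(3) holds, the data $(J, x_0, \ldots, x_{2f-1})$ fits the (IRR) prescription directly, and the same theorem applies. Finally, if $e \ge p-1$ and only the determinant condition is known, then Proposition~\ref{prop:niveau-one-get-everything-if-e-large} (reducible case) or Lemma~\ref{lem:get-everything-if-e-is-large} (irreducible case) produces $J$ and allowable $x_i$'s realizing condition~(2) or~(3), so the lift exists via the previous step.

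\emph{Main obstacle.} The serious technical work is already contained in Sections~\ref{sec: necessity} and~\ref{sec:local lifts}; what remains is essentially bookkeeping. The only delicate point is that conditions~(2) and~(3) specify $\rhobar$ only on inertia, whereas the sufficiency direction must reproduce $\rhobar$ itself as a representation of the full decomposition group. This is resolved by the free parameters $\gamma_i, \gamma_i'$ in the Breuil module (whose effect on the generic fibre is made explicit by Corollary~\ref{cor:chars for rank one breuil modules}), together with the final unramified twist with trivial reduction mod~$p$ that is absorbed into the conclusion of Theorem~\ref{thm: construction of the strongly divisible modules}.
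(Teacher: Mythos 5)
Your proposal is correct and follows essentially the same route as the paper: necessity via Lemma \ref{lem:determinant of modular of some weight} together with the Breuil-module analysis of sections \ref{ss:reducible} and \ref{ss:irreducible} (equation \eqref{eq:niveau one formula} and Proposition \ref{prop:irreduciblecase}), and sufficiency by assembling the data into a Breuil module of type (RED) or (IRR), using Corollary \ref{cor:chars for rank one breuil modules} to adjust the $\gamma_i,\gamma_i'$ (respectively an unramified twist in the irreducible case) so that the generic fibre matches $\rhobar$ on all of $G_{F_\p}$, then invoking Theorem \ref{thm: construction of the strongly divisible modules}, with case (1) reduced to cases (2) and (3) by Proposition \ref{prop:niveau-one-get-everything-if-e-large} and Lemma \ref{lem:get-everything-if-e-is-large}. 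This matches the paper's own proof in both structure and the lemmas invoked.
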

Note that by Lemma~\ref{lem:get-everything-if-e-is-large}  and the
observation following its proof, 
the condition in (iii) that the $x_i$ are
nonnegative integers satisfying $x_i + x_{i+f} = e$ may be replaced by
the condition that $0 \le x_i \le e$ for all $i$ and $\psi^{q+1}
|_{I_{F_{\p}}} = \epsilon \cdot \overline{\chi}^{q+1}$.

\begin{proof}
  The necessity of these conditions follows from  Lemma
  \ref{lem:determinant of modular of some weight} and the discussions
  of section \ref{ss:reducible} and \ref{ss:irreducible}, particularly equation
  \eqref{eq:niveau one formula} and Proposition~\ref{prop:irreduciblecase}. For their
  sufficiency, consider first the case that $\rhobar$ is
  irreducible. Then by the discussion of section
  \ref{ss:irreducible}, along with the conditions given in (iii), there is a Breuil module
  $\overline{\N}$ as in section \ref{sec:local lifts} so that the generic
fibre of $\overline{\N}$ restricted to $I_{F_{\p}}$ is
$\rhobar |_{I_{F_{\p}}}$.  Since we are in the irreducible case, the generic fibre of $\overline{\N}$ is an unramified
twist of $\rhobar$, and  the representation coming from Theorem
  \ref{thm: construction of the strongly divisible modules} applied to
  $\overline{\N}$ is an unramified twist of the desired lift of
  $\rhobar$.   This completes case (iii), and case (i) with $\rhobar$
  irreducible follows from case (iii) combined with Lemma~\ref{lem:get-everything-if-e-is-large}. 

  In the case that $\rhobar$ is reducible, note that
  case (i) will follow immediately from case (ii) and
  Proposition~\ref{prop:niveau-one-get-everything-if-e-large}.  For case (ii), observe that by the discussion in section
  \ref{ss:reducible}, the generic fibre of the rank one Breuil
  module $\M$ of  section \ref{ss:reducible}  agrees with
  $\psi_1$ up to an unramified twist; but in fact by  Corollary \ref{cor:chars
    for rank one breuil modules} the parameters $\gamma_i$ may be
  chosen so that these characters agree on the whole group
  $G_{F_{\p}}$.  Choosing the parameters $\gamma_i'$ similarly to suit
  $\psi_2$, the Breuil
  module $\M$ from  section \ref{ss:reducible} may be extended to a Breuil
  module  $\overline{\N}$ as in
  section \ref{sec:local lifts}, satisfying the conditions (RED),  whose  generic
  fibre is $\rhobar$.  The result
  again follows from Theorem \ref{thm: construction of the strongly divisible modules}.
\end{proof}

We now return to the situation where $\rhobar : G_F \to \GL_2(\Fpbar)$
is a global representation.

\begin{thm}\label{thm: explicit description of the weights in the
    semisimple case}  Suppose that $\rhobar : G_F \to \GL_2(\Fpbar)$  is
  continuous, and that $\rhobar|_{G_{F_v}}$
  is semisimple for each $v|p$. Let $\sigma=\otimes_{v|p}\sigma_v$
  be a weight. Then $\sigma\in W^?(\rhobar)$ if and only if for each
  $v|p$ we have
  \begin{enumerate}
  \item $\sigma_v$ is of type~I, and the conditions of Theorem
    \ref{thm: explicit description of when pot BT lifts exist} apply
    with $\p=v$ and $\chi=\widetilde{\sigma}_v$ (regarded as a
    character of $I_{F_v}$ by local class field theory), or
  \item $\sigma_v$ is of type~II, and \[\rhobar|_{I_{F_v}}\cong\sigma_v\begin{pmatrix}\epsilon
    & 0\\ 0 & 1\end{pmatrix}\]where $\sigma_v$ is regarded as a character of
  $I_{F_v}$ via local class field theory.
  \end{enumerate}

 In particular, if for each $v|p$ the ramification index of $F_v$ is
 at least $p-1$, and $\sigma_v$ is of type~I for each $v|p$, then
 $\sigma\in W^?(\rhobar)$ if and only if for each $v|p$ we have
 $\det\rhobar|_{I_{F_v}}=\epsilon\cdot\sigma_v^{q+1}$.
\end{thm}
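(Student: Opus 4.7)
My plan is to reduce the claim to a purely local statement at each $v \mid p$, handle the type~I and type~II cases separately, and then deduce the final displayed assertion as a clean specialization of the type~I analysis. By Definition~\ref{defn: the set of conjectural weights}, membership $\sigma \in W^?(\rhobar)$ is equivalent to $\sigma_v \in W^?(\rhobar|_{G_{F_v}})$ for every $v \mid p$, and clauses~(i) and~(ii) of the theorem are statements about~$v$ alone; so it suffices to prove the equivalence at each place.

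For $\sigma_v$ of type~I, the Teichm\"uller lift $\chi := \widetilde{\sigma}_v$ is a character of $F_{v,2}^\times$ not factoring through the norm, so $\chi \neq \chi^{q_v}$. Unraveling Definition~\ref{defn: the set of conjectural weights}, $\sigma_v \in W^?(\rhobar|_{G_{F_v}})$ means $\rhobar|_{G_{F_v}}$ has a parallel potentially Barsotti-Tate lift of type $\chi \oplus \chi^{q_v}$, and Theorem~\ref{thm: explicit description of when pot BT lifts exist} applied at $\p = v$ gives exactly the required equivalence with clause~(i).

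For $\sigma_v$ of type~II, I need the equivalence of the existence of a parallel potentially semistable non-crystalline lift of type $\widetilde{\sigma}_v \oplus \widetilde{\sigma}_v$ with the stated form of $\rhobar|_{I_{F_v}}$. Necessity is the argument underlying Lemma~\ref{lem:weight of type II implies cyclo}: such a lift is an unramified twist of $\widetilde{\sigma}_v$ tensored with a standard (Tate-curve-type) semistable non-crystalline extension of the trivial character by the cyclotomic character, whose semisimple mod~$p$ reduction restricted to $I_{F_v}$ is $\sigma_v \oplus \sigma_v \epsilon$; the hypothesis that $\rhobar|_{G_{F_v}}$ is semisimple then forces this to agree with $\rhobar|_{I_{F_v}}$. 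For sufficiency, I will decompose $\rhobar|_{G_{F_v}} = \psi_1 \oplus \psi_2$ with $\psi_1 = \psi_2 \epsilon$ and $\psi_2|_{I_{F_v}} = \sigma_v$, lift $\psi_2$ to a character $\widetilde{\psi}_2$ whose inertial restriction is $\widetilde{\sigma}_v$, and form the $\widetilde{\psi}_2$-twist of a non-split semistable extension of the trivial character by the cyclotomic character. The determinant of the resulting lift is $\widetilde{\psi}_2^2$ times the cyclotomic character, and is parallel because $\widetilde{\psi}_2^2|_{I_{F_v}}$ is a Teichm\"uller lift and so has prime-to-$p$ finite order. The main obstacle I anticipate is arranging the mod-$p$ reduction to be the given (already semisimple) $\rhobar|_{G_{F_v}}$ rather than some non-split extension with the same semisimplification; this can be handled by multiplying the integral extension class by a sufficiently high power of $p$ so that its mod-$p$ reduction is trivial.

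For the final displayed assertion, assume that for each $v \mid p$ the ramification index $e$ of $F_v$ is at least $p-1$ and each $\sigma_v$ is of type~I. Then case~(i) of Theorem~\ref{thm: explicit description of when pot BT lifts exist} is available and imposes only the single condition $\det \rhobar|_{I_{F_v}} = \epsilon \cdot \widetilde{\sigma}_v^{q+1}$. Since $\widetilde{\sigma}_v^{q+1}$ factors through the norm $k_{2,v}^\times \to k_v^\times$, its mod-$p$ reduction is $\sigma_v^{q+1}$ viewed as a character of $I_{F_v}$ via local class field theory, yielding the criterion in the statement.
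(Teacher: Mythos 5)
Your proposal is correct and follows essentially the same route as the paper: reduce to a place-by-place statement via Definition~\ref{defn: the set of conjectural weights}, invoke Theorem~\ref{thm: explicit description of when pot BT lifts exist} for type~I (noting $\chi\neq\chi^q$), and in the type~II case get necessity from the classification of semistable non-crystalline representations with Hodge--Tate weights $\{0,1\}$ and sufficiency by twisting an extension of the trivial character by the cyclotomic character, which is exactly the paper's argument (your extra step of scaling the integral extension class by a power of $p$ to force split reduction is a useful refinement the paper leaves implicit). The only wording to fix is in your sufficiency paragraph: the extension must be chosen \emph{non-crystalline} (e.g.\ a Kummer/Tate-curve class), not merely non-split, since a non-split but crystalline class would produce a potentially crystalline lift and hence not witness membership in $W^?$.
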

\begin{proof}This all follows immediately from Definition \ref{defn:
    the set of conjectural weights} and Theorem \ref{thm: explicit
    description of when pot BT lifts exist}, except for the case
  that $\sigma_v$ is of type~II. In this case, the necessity of the
  given condition follows from Lemma \ref{lem:weight of type II
    implies cyclo}, and the sufficiency is straightforward: twisting
  reduces to the case $\sigma_v=1$, when the result follows from the
  existence of a non-crystalline extension of the trivial character by
  the cyclotomic character.
\end{proof}

\section{Proof of the weight conjecture}\label{main results}Recall that we are
assuming that $F$ is a totally real field. We now prove in many cases
that $W(\rhobar)=W^?(\rhobar)$, by combining the results of earlier sections
with the lifting machinery of Khare-Wintenberger, as interpreted by
Kisin. In particular, we use the following result.

\begin{defn}
  \label{defn:ordinary}
  Let $v|p$. We say that a representation $\rho:G_{F_v}\to\GL_2(\Qpbar)$ is
  \emph{ordinary} if $\rho|_{I_{F_v}}$ is an extension of a finite order
  character by a finite order character times the cyclotomic
  character.
\end{defn}

\begin{prop}
  \label{prop:existence-of-global-lifts}
  Suppose that $p>2$ and that $\rhobar:G_F\to\GL_2(\Fpbar)$ is
  modular. Assume that $\rhobar|_{G_{F(\zeta_p)}}$ is irreducible. If
  $p=5$ and the projective image of $\rhobar$ is isomorphic to
  $\PGL_2(\F_5)$, assume further that $[F(\zeta_p):F]=4$. Suppose that
  for each place $v|p$, $\theta_v$ is an inertial type for $I_{F_v}$
  such that $\rhobar|_{G_{F_v}}$ has a non-ordinary parallel potentially
  Barsotti-Tate lift of type $\theta_v$. Then $\rhobar$ has a modular
  lift which is parallel potentially Barsotti-Tate of type $\theta_v$ for all
  $v|p$.
\end{prop}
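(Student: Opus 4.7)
The plan is to deduce this from the machinery developed in \cite{gee061}, which packages the Khare--Wintenberger method together with Kisin's potential modularity and modularity lifting theorems \cite{kis04} into precisely such a lifting statement. The hypotheses on $\rhobar|_{G_{F(\zeta_p)}}$ and on the projective image when $p=5$ are exactly the technical conditions imposed by Kisin's modularity lifting theorems; the requirement that the local lifts be \emph{non-ordinary} is included so that at each place $v|p$ we can work with a component of the potentially Barsotti--Tate deformation ring on which the relevant Kisin modularity lifting theorem applies.

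More concretely, the plan is as follows. First, at each place $v\nmid p$ in a suitable finite set of bad places, choose a minimally ramified local deformation (or more generally any smooth local condition compatible with the modularity of $\rhobar$). At each place $v\mid p$, the hypothesis provides a parallel potentially Barsotti--Tate lift of $\rhobar|_{G_{F_v}}$ of type $\theta_v$ which is non-ordinary; this lift corresponds to a $\Qpbar$-point on the potentially Barsotti--Tate framed deformation ring of $\rhobar|_{G_{F_v}}$ of type $\theta_v$ and fixed determinant, and in particular it picks out a specific geometric component of that ring, which by non-ordinarity avoids the ordinary component. Next, assemble these local conditions into a global deformation problem and apply the Khare--Wintenberger method as presented in \cite{gee061}: one finds a characteristic zero lift of $\rhobar$ whose restriction to $G_{F_v}$ lies on the chosen component for each $v\mid p$, and then applies Kisin's modularity lifting theorem to promote this lift to a modular one of type $\theta_v$ at each $v\mid p$. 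Since the chosen local deformation is parallel potentially Barsotti--Tate of type $\theta_v$ by construction, the resulting modular global lift has the desired local types.

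The main obstacle, which is precisely why the non-ordinary hypothesis is imposed, is the matching of components of the potentially Barsotti--Tate deformation ring between the given local lift and a modular lift. Kisin's modularity lifting theorem in the potentially Barsotti--Tate setting requires one to work on a specific geometric component of the local deformation ring, and in the potentially crystalline setting the non-ordinary components are the ones to which the theorem applies directly; if the given local lift were forced onto the ordinary component, one would instead need the ordinary modularity lifting theorem, whose hypotheses are different. Granting the non-ordinary hypothesis, the remainder of the argument is an essentially formal application of the framework of \cite{gee061}, and we omit further detail.
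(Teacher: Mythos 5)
Your proposal is correct and takes essentially the same route as the paper: the paper's entire proof is the observation that this is a special case of Corollary 3.1.7 of \cite{gee061}, and your sketch is just an unpacking of the Khare--Wintenberger/Kisin machinery underlying that corollary, including the correct explanation of why non-ordinarity is needed to apply the potentially Barsotti--Tate modularity lifting theorem.
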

\begin{proof}
  This is a special case of Corollary 3.1.7 of \cite{gee061}.
\end{proof}

Recall that we defined the set of weights $W^?(\rhobar)$ conjecturally
associated to $\rhobar$ in section \ref{sec:tame lifts}, and that in
the case that the restrictions of $\rhobar$ to decomposition groups
above $p$ are semisimple, Theorem \ref{thm: explicit description of
  the weights in the semisimple case} gives an explicit description of
$W^?(\rhobar)$.

\begin{thm}\label{thm: main result} Suppose that $p>2$ and that $\rhobar:G_F\to\GL_2(\Fpbar)$ is
  modular. Assume that $\rhobar|_{G_{F(\zeta_p)}}$ is irreducible. If
  $p=5$ and the projective image of $\rhobar$ is isomorphic to
  $\PGL_2(\F_5)$, assume further that $[F(\zeta_p):F]=4$. Then
  $W(\rhobar)\subset W^?(\rhobar)$. If $\sigma\in W^?(\rhobar)$, and
  $\sigma=\otimes_{v|p}\sigma_v$ with each $\sigma_v$ of type~I, then
  $\sigma\in W(\rhobar)$. In particular, if there are no places $v|p$
  for which $\rhobar|_{G_{F_v}}$ is a twist of an extension of the
  trivial character by the cyclotomic character, then $W(\rhobar)=W^?(\rhobar)$.
\end{thm}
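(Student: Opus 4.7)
The plan is to address the three assertions in turn. The inclusion $W(\rhobar) \subset W^?(\rhobar)$ is immediate from Lemma \ref{lem:typesversusweights} and the definition of $W^?(\rhobar)$, as was already observed in the text following Definition \ref{defn: the set of conjectural weights}.

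For the main (type~I) assertion, I take $\sigma = \otimes_{v|p}\sigma_v \in W^?(\rhobar)$ with each $\sigma_v$ of type~I, and aim to apply Proposition \ref{prop:existence-of-global-lifts} with $\theta_v = \widetilde{\sigma}_v \oplus \widetilde{\sigma}_v^{q_v}$. The definition of $W^?(\rhobar)$ directly provides parallel potentially Barsotti--Tate local lifts of these types, and the converse direction of Lemma \ref{lem:typesversusweights} will convert any modular global lift produced by the proposition into the conclusion $\sigma \in W(\rhobar)$. The only non-formal step is to verify the non-ordinariness hypothesis of Proposition \ref{prop:existence-of-global-lifts}. To this end, I observe that since $\sigma_v$ is of type~I, $\widetilde{\sigma}_v$ (viewed via local class field theory as a character of $I_{F_v}$) is of niveau~$2$, i.e.~does not factor through $k_v^\times$. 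A short case analysis---treating separately the possibilities that a putative ordinary lift is globally reducible (so its diagonal inertial characters extend to $G_{F_v}$ and are therefore of niveau~$1$) or is induced from the unramified quadratic extension of $F_v$ (where the $\epsilon$-shift produces a contradiction since $\epsilon^{q_v}$ is nontrivial on tame inertia)---rules out ordinary potentially Barsotti--Tate representations of $G_{F_v}$ with inertial type $\widetilde{\sigma}_v \oplus \widetilde{\sigma}_v^{q_v}$.

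For the ``in particular'' assertion, I reduce to the type~I case by showing the hypothesis rules out any type~II component of any $\sigma \in W^?(\rhobar)$. If some $\sigma \in W^?(\rhobar)$ had a component $\sigma_v$ of type~II, then by definition $\rhobar|_{G_{F_v}}$ would admit a parallel potentially semistable non-crystalline lift; by the standard fact cited in the proof of Lemma \ref{lem:weight of type II implies cyclo}, such a lift is an unramified twist of an extension of the trivial character by the cyclotomic character, whence $\rhobar|_{G_{F_v}}$ itself would be a twist of such an extension, contradicting the hypothesis. The main obstacle is the non-ordinariness observation in the previous paragraph; the remainder of the argument is a formal combination of Lemma \ref{lem:typesversusweights} and Proposition \ref{prop:existence-of-global-lifts}.
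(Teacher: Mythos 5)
Your proposal is correct and follows essentially the same route as the paper's proof: the inclusion $W(\rhobar)\subset W^?(\rhobar)$ from Lemma \ref{lem:typesversusweights}, the type~I case by feeding the local lifts furnished by the definition of $W^?(\rhobar)$ into Proposition \ref{prop:existence-of-global-lifts} and converting back via the converse direction of Lemma \ref{lem:typesversusweights}, and the ``in particular'' statement via the type~II observation underlying Lemma \ref{lem:weight of type II implies cyclo}. The only (immaterial) difference is how non-ordinariness is verified: you give a direct reducible-versus-induced case analysis, whereas the paper simply notes that a lift of type $\widetilde{\sigma}_v\oplus\widetilde{\sigma}_v^{q_v}$ with $\sigma_v$ of type~I only becomes crystalline over a nonabelian extension; both arguments come down to the fact that $\widetilde{\sigma}_v$ has niveau two, i.e.\ is not Frobenius-invariant on tame inertia.
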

\begin{proof}
  The inclusion $W(\rhobar)\subset W^?(\rhobar)$ already follows from
  Lemma \ref{lem:typesversusweights}. The rest of the result follows from Lemma
  \ref{lem:typesversusweights}, Proposition
  \ref{prop:existence-of-global-lifts} and Lemma \ref{lem:weight of type II implies cyclo}, because if $\sigma_v$ is of
  type~I, any lift of type $\widetilde{\sigma}_v\oplus\widetilde{\sigma}_v^{q_v}$ only becomes crystalline over a
  nonabelian extension, and is thus certainly non-ordinary. 
\end{proof}
\begin{remark}
  It should be possible to improve this result to prove the equality
  $W^?(\rhobar)=W(\rhobar)$ under the assumption that $\rhobar$ has a
  modular lift of parallel weight two which is ordinary at any place $v$
  such that there is an element of $W^?(\rhobar|_{G_{F_v}})$ of type
  II. This would involve strengthening Proposition
  \ref{prop:existence-of-global-lifts} to include potentially
  semistable lifts, and the use of $R=T$ theorems for Hida
  families. The required results are not in the literature in the
  appropriate level of generality, however.
\end{remark}

\appendix
\section{Corrigendum to \cite{sav04}}

\numberwithin{equation}{subsection}
\newcommand{\mm}{\mathfrak{m}}
\newcommand{\Qpp}{{\mathbb Q}_{p^2}}
\newcommand{\kk}{{\mathbf k}}
\newcommand{\val}{{\rm val}}
\newcommand{\D}{{\mathcal D}}
\newcommand{\Fpp}{{\mathbb F}_{p^2}}
\newcommand{\MM}{{\mathscr M}}

The second author wishes to take this opportunity to correct an error in \cite{sav04}, as a consequence of which there is one more family of strongly divisible modules that
must be studied by the methods of \cite{sav04}. Once this is
done, the remaining claims of \cite{sav04} are unaffected. We
adopt the notation of \cite{sav04} without further comment, and
all numbered references are to that paper.

The mistake is in the statement and proof of Theorem 6.12(4).  In
the situation of that item, if $m = 1 + (p+1)j$ --- i.e., if
$i=1$ --- then the two characters $\omega_2^{m+p}$ and
$\omega_2^{pm+1}$ are both characters of niveau one, and are equal;
hence in this case the proof of Theorem 6.12(4) does \emph{not} show
that $T_{\mathrm{st},2}^{\Qp}(\MM/\mm_E) \,|_{I_p}$ decomposes as a sum of two
conjugate characters. In fact, for each choice $c$ of square root of
$\overline{w}$, the map $\MM_2' \rightarrow \MM_E(F_2/\Qpp,e_2,c,m-1)$
extends to a map $\MM' \rightarrow \MM_E(F_2/\Qp,e_2,c,j)$; by
Proposition 5.4(1), we conclude when $i=1$ that
$$ T_{st,2}^{\Qp}(\MM/\mm_E) \otimes_{\kk_E} \Fpbar
\cong \lambda_{c^{-1}} \omega^{1+j} \oplus \lambda_{-c^{-1}}
\omega^{1+j} \,.$$ This means that when $i=1$ and $\val(b)>0$ we
still need to construct a strongly divisible lattice in
$\D_{m,[1:b]}$ whose reduction mod $p$ has trivial endomorphisms;
or, conversely, we need to study deformations of type $\omt^{m} \oplus
\omt^{pm}$ (with $i=1$) of non-split residual representations of the form
$$\begin{pmatrix}
\lambda_{c^{-1}} \omega^{1+j} & * \\
0 & \lambda_{-c^{-1}} \omega^{1+j}
\end{pmatrix}\,.$$
We rectify this omission now.  Our statements are numbered to mesh
with the original article if one drops the \textbf{A.} prefix.

\setcounter{subsection}{6}
\setcounter{equation}{6}

\begin{lem} $\mathrm{(2)}$  If $i = 1$, $\val_p(b) > 0$, and $w$ is a
square in $E$, then there is $X \in S_{F_2,\OO_E}^{\times}$ satisfying
$$ X(1 \otimes wb) = 1\otimes w - \left(1 + \frac{u^{pe_2}}{p}\right)X\phi(X).$$
\end{lem}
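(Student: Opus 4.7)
The plan is to construct $X$ by successive approximation, after first using the square root of $w$ to bring the equation into a convenient form.

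Since $w \in E^{\times}$ is a square, $\val_p(w)$ is even, so we may assume $w \in \OO_E^{\times}$ and write $w = c^2$ with $c \in \OO_E^{\times}$. I would substitute $X = (1 \otimes c) Y$; since $\phi$ is $\OO_E$-linear on $S_{F_2,\OO_E}$ we have $\phi(1 \otimes c) = 1 \otimes c$, and thus $X \phi(X) = (1 \otimes w) Y \phi(Y)$. Dividing through by the unit $1 \otimes w$ reduces the problem to finding $Y \in S_{F_2,\OO_E}^{\times}$ satisfying
\[
Y \bigl( U \phi(Y) + \beta \bigr) = 1,\qquad U := 1 + \frac{u^{p e_2}}{p} \in S_{F_2,\OO_E}^{\times},\quad \beta := 1 \otimes cb \in \mm_E \cdot S_{F_2,\OO_E}.
\]

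Next, I would define $Y$ as the limit of the iteration $Y_{n+1} := (U\phi(Y_n) + \beta)^{-1}$, which preserves $S_{F_2,\OO_E}^{\times}$ because $U\phi(Y_n) + \beta$ reduces modulo $\mm_E$ to the unit $U\phi(\bar Y_n)$. The starting point $Y_0$ should satisfy $Y_0 U \phi(Y_0) \equiv 1 \pmod{\mm_E S_{F_2,\OO_E}}$. To produce such a $Y_0$, I would first solve $\bar Y \phi(\bar Y) = \bar U^{-1}$ in $\bar S := S_{F_2,\OO_E}/\mm_E$ by writing $\bar Y$ as a formal series in $u$ and determining its coefficients order by order, using $\phi(u) = u^p$ to decouple successive levels and starting from constant term $1$ (which is a square in $k_E^{\times}$, matching the constant term of $\bar U^{-1}$), then lift to $Y_0 \in S_{F_2,\OO_E}^{\times}$.

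Convergence of the iteration follows from the identity
\[
Y_{n+1} - Y_n = -U Y_{n+1} Y_n \cdot \phi(Y_n - Y_{n-1}),
\]
together with the base case $Y_1 - Y_0 \in \mm_E S_{F_2,\OO_E}$ (which holds by the choice of $Y_0$ and because $\beta \in \mm_E S_{F_2,\OO_E}$). Since $\phi$ acts by $u \mapsto u^p$, iterating the recursion produces $u$-adic contraction at each fixed $\mm_E$-level; combined with the divided-power relation $u^{pe_2} \in \mm_E S_{F_2,\OO_E}$ (arising from $E(u)^p/p! \in S_{F_2,\OO_E}$), this will yield convergence in a suitable completion of $S_{F_2,\OO_E}$ blending the $u$-adic and $\mm_E$-adic filtrations. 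The limit $Y \in S_{F_2,\OO_E}^{\times}$ satisfies the simplified equation, and $X := (1 \otimes c) Y$ then solves the original.

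The hardest step will be the convergence analysis: because $\phi$ is $\OO_E$-linear, it does not by itself accelerate $\mm_E$-adic convergence, so one must carefully exploit both the $u$-adic contraction from $\phi$ and the way divided powers intertwine the $u$- and $\mm_E$-filtrations in $S_{F_2,\OO_E}$ in order to rigorously show that the errors $Y_{n+1} - Y_n$ tend to zero in the appropriate completion.
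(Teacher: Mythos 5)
Your reduction to $Y\bigl(U\phi(Y)+\beta\bigr)=1$ with $U=1+u^{pe_2}/p$ and $\beta=1\otimes cb$ is fine, but the fixed-point iteration $Y_{n+1}=(U\phi(Y_n)+\beta)^{-1}$ does not converge, and the difficulty you flag at the end is not a technicality that the divided-power structure can repair: the scheme already fails in $u$-degree zero. Evaluation at $u=0$ gives a continuous ring homomorphism $S_{F_2,\OO_E}\to W(\F_{p^2})\otimes_{\Zp}\OO_E$ commuting with $\phi$ (it is defined on the divided-power envelope because the ideal $(E(0))=(p)$ already has divided powers, it kills $u^{pe_2}/p$ since the target is $p$-torsion free, so $U\mapsto 1$). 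Under this map your recursion becomes $y_{n+1}=(\phi(y_n)+cb)^{-1}$ in $W(\F_{p^2})\otimes\OO_E\cong\OO_E\times\OO_E$, where $\phi$ merely swaps the two factors. Your own identity $y_{n+1}-y_n=-y_{n+1}y_n\,\phi(y_n-y_{n-1})$ then shows that the $\m_E$-adic valuation of $y_{n+1}-y_n$ equals that of $y_1-y_0$ for every $n$, because the $y_n$ are units and $\phi$ preserves valuations; so the differences never tend to zero unless $y_0$ is already an \emph{exact} root. Concretely, on each factor the second iterate is the M\"obius map $s\mapsto (s+cb)/(1+cbs+(cb)^2)$, whose matrix has two unit eigenvalues of equal absolute value, so neither fixed point is attracting (for $b=0$ the map is literally $s\mapsto s^{-1}$, of order two). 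The $u$-adic contraction coming from $\phi(u)=u^p$ controls only the positive-degree coefficients and cannot see this obstruction, so choosing $Y_0$ only modulo $\m_E$ is not enough to start a convergent approximation.

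The repair is to abandon iteration and solve degree-by-degree in $u$, which is how the paper argues and is where the hypothesis that $w$ is a square actually enters. Writing $X=\sum_n x_nu^n$, the constant term must satisfy the quadratic $x_0^2+wbx_0-w=0$; since $w$ is a unit square and $\operatorname{val}_p(b)>0$, the discriminant $w(wb^2+4)$ is a square in $E$ and the two roots lie in $\OO_E^\times$ (they reduce to $\pm c$ with $c^2=\overline{w}$), so one chooses an exact root $x_0$. With $x_0$ fixed, comparing coefficients of $u^n$ for $n\ge 1$ gives $x_n(x_0+wb)=(\text{terms involving only }x_j,\ j<n)$: the only appearance of $x_n$ on the right comes from $x_n\phi(x_0)$, because $\phi$ multiplies $u$-degrees by $p$ and the factor $u^{pe_2}/p$ shifts degrees up. Since $x_0+wb$ is a unit, this triangular linear recursion determines all $x_n$, and the resulting series lies in $S_{F_2,\OO_E}^{\times}$ (high powers of $u$ are highly $p$-divisible in $S$, so convergence is automatic in the $p$-adic topology). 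In short: the degree-zero part must be handled by the quadratic formula (Hensel), not by your fixed-point scheme, and once that is done no iteration over $\m_E$ is needed at all.
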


\begin{proof} The constant term of $X$ may be taken to be $1 \otimes
  x_0$ where $x_0$ is either root of $x_0^2 + wbx_0 - w$ in
  $\OO_E^{\times}$. 
 The recursion for the coefficient $x_n$ of $u^n$ is $x_n(x_0 + wb) =
  \mathrm{lower \ terms}$, and so the recursion can be solved to
  obtain $X \in S_{F_2,\OO_E}^{\times}$.  
\end{proof}

Moreover, since $\val_p(b) > 0$, by putting the variable $B$ for $b$
we obtain an element $X_B$ of $S_{F_2,\OO_E[[B]]}$ which specializes
to $X$ under the map $\OO_E[[B]] \rightarrow \OO_E$ sending $B \mapsto b$.
Note that the image of $X$ in $(\Fpp \otimes \kk_E)[u]/u^{e_2 p}$ is
$1\otimes c$ with $c$ a square root of $\overline{w}$.   Assume
henceforth that the coefficient field $E$ contains a square root of $w$.
Now Proposition 6.10 is modified as follows.
%%% define the strongly divisible module here 

\setcounter{equation}{9}
\begin{prop}  In the case $i=1$ and $\val_p(b) > 0$, we instead define
$$ \MM_{m,[1:b]} = S_{F_2,\OO_E} \cdot g_1 + S_{F_2,\OO_E} \cdot g_2 $$
\begin{eqnarray*}
g_1 & =& \mathbf{e}_1 + \frac{X}{pw} u^{p(p-1)} \mathbf{e}_2 \\
g_2 & = & \mathbf{e}_2,
\end{eqnarray*}
and this is a strongly divisible $\OO_E$-module with descent data inside $\D_{m,[1:b]}$.
\end{prop}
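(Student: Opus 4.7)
The plan is to verify in turn each of the defining axioms of a strongly divisible $\OO_E$-module with descent data on the proposed $\MM_{m,[1:b]}$. Since only the module and its basis are specified in the statement, the filtration is understood as $\Fil^1 \MM_{m,[1:b]} = \MM_{m,[1:b]} \cap \Fil^1 \D_{m,[1:b]}$, and what must be checked is that: (a) $\{g_1, g_2\}$ really is an $S_{F_2,\OO_E}$-basis of its span; (b) the induced filtration has the expected explicit generators; (c) $\phi$ preserves $\MM_{m,[1:b]}$ with $\phi_1(\Fil^1 \MM_{m,[1:b]})$ generating $\MM_{m,[1:b]}$; (d) the monodromy operator $N$ of $\D_{m,[1:b]}$ preserves $\MM_{m,[1:b]}$; and (e) so does the descent data from $F_2$ to $\Qp$.

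Item (a) is immediate, because the transition matrix from $(g_1,g_2)$ to $(\mathbf{e}_1,\mathbf{e}_2)$ is unit upper-triangular. For (b), one would describe $\Fil^1 \MM_{m,[1:b]}$ by the same recipe as in the original Proposition 6.10, with the corrected $g_1$ playing the role of the old $g_1$; the delicate feature here, which forced the whole correction, is that $i=1$ makes $\omega_2^m$ and $\omega_2^{pm}$ coincide as a niveau one character, so the generic splitting argument of the original proof does not apply.

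The central step is (c). When applying $\phi_1$ to the filtration generator built from $g_1$, the correction term $\frac{X}{pw} u^{p(p-1)} \mathbf{e}_2$ produces, via the relation $\phi_1(E(u)z) = c\phi(z)$ with $c = \frac{1}{p}\phi(E(u))$, a contribution involving $X\phi(X)$ together with the factor $\left(1 + \frac{u^{pe_2}}{p}\right)$. The identity
$$ X(1 \otimes wb) = 1\otimes w - \left(1 + \frac{u^{pe_2}}{p}\right) X \phi(X) $$
of the preceding Lemma is engineered precisely so that this combination rewrites as an $S_{F_2,\OO_E}$-linear combination of $g_1$ and $g_2$; one then reads off that the matrix of $\phi_1$-images of a basis of $\Fil^1 \MM_{m,[1:b]}$ modulo $(\Fil^1 S)\MM_{m,[1:b]}$ has unit determinant in $S_{F_2,\OO_E}$, which gives generation.

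For (e), invariance of $g_1,g_2$ under descent reduces to invariance of $X$: the constant term $x_0 \in \OO_E^\times$ is fixed, the recursion defining the higher coefficients is $\Gal(\Qpp/\Qp)$-equivariant, and the exponent $p(p-1)$ is divisible by the tame ramification index relevant to the descent, so $u^{p(p-1)}$ is likewise invariant. For (d), $N$ on $\D_{m,[1:b]}$ is the unique $W(k) \otimes \Zp$-endomorphism compatible with $\phi$ and lowering the filtration by one; this uniqueness, combined with the compatibilities already verified in (c) and (e), forces $N$ to preserve $\MM_{m,[1:b]}$ by exactly the argument recalled in the proof of Theorem~\ref{thm: construction of the strongly divisible modules}. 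The main obstacle is the $\phi_1$-calculation in (c): the coefficient $\frac{X}{pw} u^{p(p-1)}$ was chosen precisely so that the identity in the preceding Lemma forces the output of $\phi_1$ to land in $\MM_{m,[1:b]}$, and carefully unwinding this identity is the computational heart of the proof.
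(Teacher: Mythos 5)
Your route is the paper's: take $\Fil^1 \MM = \MM \cap \Fil^1 \D_{m,[1:b]}$, observe that it equals $S_{F_2,\OO_E}\cdot h + (\Fil^1 S_{F_2,\OO_E})\MM$ for an explicit generator (in the paper, $h = u^{p-1}g_1 + (\frac{X}{w}+(1\otimes b))g_2$), and use the defining identity for $X$ from the preceding Lemma to see that $\phi_1(h)$ is a unit multiple of $g_1$ (the paper finds $\phi_1(h) = (1\otimes w)X^{-1}g_1$), which together with the explicit formulas for $\phi(g_1)$ and $\phi(g_2)$ gives stability under $\phi$ and generation of $\MM$ by $\phi_1(\Fil^1\MM)$. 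So the computational heart of your plan coincides with the paper's proof. One small omission: you never list the axiom $I\MM \cap \Fil^1\MM = I\Fil^1\MM$, which the paper checks (easily, from the explicit description of $\Fil^1\MM$); your points about $N$ (uniqueness) are fine and the paper leaves them implicit.

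There is, however, a genuine error in your justification of descent-data stability. The element $u^{p(p-1)}$ is \emph{not} invariant under $\Gal(F_2/\Qp)$: the tame ramification degree here is $e_2 = p^2-1$, which does not divide $p(p-1)$, and $u^{p(p-1)}$ transforms by $\widetilde{\omega}_2^{p(p-1)}$. What makes $g_1$ span a descent-stable line is precisely the hypothesis $i=1$: with $m = 1+(p+1)j$ one has $pm + p(p-1) \equiv m \pmod{p^2-1}$, so the twist coming from $u^{p(p-1)}$ converts the character $\widetilde{\omega}_2^{pm}$ on $\mathbf{e}_2$ into $\widetilde{\omega}_2^{m}$, giving $\widehat{g}(g_1) = (\widetilde{\omega}_2^m(g)\otimes 1)g_1$ (one also needs $X$ itself to be Galois-invariant, which follows because its defining recursion has invariant coefficients --- note $u^{pe_2}$ \emph{is} invariant --- and a unique solution with the chosen constant term $x_0$). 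So the conclusion you want is true, but not for the reason you gave, and as stated that step would fail; moreover this is exactly the point where the restriction to $i=1$ enters the construction, so it should not be waved through.
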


\begin{proof} Put $\MM = \MM_{m,[1:b]}$.  Observe that $h := u^{p-1} g_1 +
  \left(\frac{X}{w} + (1\otimes b)\right)g_2$ lies in $\Fil^1 \MM$.
  Since $\frac{X}{w} + (1 \otimes b)$ is a unit in $S_{F_2,\OO_E}$ and
  $g_1$ does not lie in $\Fil^1 \MM$, we deduce that $\Fil^1 \MM =
  S_{F_2,\OO_E} \cdot h + (\Fil^1 S_{F_2,\OO_E})\MM$.  From this it is
  easy to check that $ I\MM \cap \Fil^1 \MM = I \Fil^1 \MM$.  Finally, we
  compute that
\begin{eqnarray*}
\phi(g_1) & = & \phi(X) u^{p^2(p-1)} g_1 + \left(1-X\phi(X)\frac{u^{pe_2}}{pw}\right) g_2 \\
\phi(g_2) & = & pwg_1 - X u^{p(p-1)} g_2
\end{eqnarray*}
both lie in $\MM$; using the defining relation for $X$ we find
$\phi_1(h) = (1 \otimes w) X^{-1} g_1 \in \MM$ and conclude that $\MM$
is a strongly divisible module.
\end{proof}

Now amend Theorem 6.12(4) so that it applies only to the case $i > 1$, and add
the following. 

\setcounter{equation}{11} 
\begin{thm}  $\mathrm{(5)}$ If $i=1$ and $\val_p(b) > 0$, then
  $T_{\mathrm{st},2}^{\Qp}(\MM/\mm_E)$ is independent of $b$ and 
$$
T_{\mathrm{st},2}^{\Qp}(\MM/\mm_E) \cong
\begin{pmatrix}
\lambda_{-c^{-1}} \omega^{1+j} & * \\
0 & \lambda_{c^{-1} } \omega^{1+j}
\end{pmatrix}
$$ with $* \neq 0$.
\end{thm}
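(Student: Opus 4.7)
The plan is to reduce $\MM = \MM_{m,[1:b]}$ modulo $\mm_E$ and analyse the resulting Breuil module with descent data. For \emph{independence of $b$}: since $\val_p(b) > 0$, we have $b \equiv 0$, $X \equiv 1\otimes c$, and $w \equiv c^2$ modulo $\mm_E$, so $X/w \equiv 1\otimes c^{-1}$. The parameter $b$ enters the defining data of $\MM$ only through the expression $X/w + 1\otimes b$ in the generator $h$ of $\Fil^1$, and its contribution vanishes on reduction. Therefore $\MM/\mm_E$, and hence $T_{\st,2}^{\Qp}(\MM/\mm_E)$, depends only on the residue $c$.

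For the \emph{semisimplification}: reducing the formula in Proposition~6.10 gives
\[ \phi(\bar g_2) \equiv -(1\otimes c)\,u^{p(p-1)}\,\bar g_2 \pmod{\mm_E}, \]
so the submodule $\overline{\N}$ of $\MM/\mm_E$ generated by $\bar g_2$ is $\phi$-stable. The identity $u^{e_2-p+1}\bar h = u^{e_2}\bar g_1 + u^{e_2-p+1}(1\otimes c^{-1})\bar g_2$ also shows that $u^{e_2-p+1}(1\otimes c^{-1})\bar g_2 \in \Fil^1(\MM/\mm_E) \cap \overline{\N}$, and a routine check then verifies that $\overline{\N}$ is a rank-one sub-Breuil module with compatible descent data. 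Applying the argument from the original proof of Theorem~6.12(4) via Proposition~5.4(1) identifies the generic fibre of $\overline{\N}$ with $\lambda_{c^{-1}}\omega^{1+j}$ and that of the quotient $\MM/\mm_E/\overline{\N}$ generated by $\bar g_1$ with $\lambda_{-c^{-1}}\omega^{1+j}$, yielding the asserted semisimplification.

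For \emph{non-splitness}: suppose a complementary rank-one sub $\overline{\N}'$ exists, necessarily generated by some $v = \bar g_1 + \alpha\bar g_2$. Using the identity $X\phi(X)(1 + u^{pe_2}/p) = 1\otimes w - X(1\otimes wb)$ to rewrite the $\bar g_2$-coefficient of $\phi(g_1)$ as $Xb + X\phi(X)/w$, one finds $\phi(\bar g_1) \equiv \phi(X)\,u^{p^2(p-1)}\,\bar g_1 + \beta \bar g_2$ modulo $\mm_E$, where $\beta$ has unit constant term. The requirement $\phi(v) \in (S/\mm_E)\cdot v$ then forces
\[ \beta = \phi(X)\,u^{p^2(p-1)}\,\alpha + (1\otimes c)\,u^{p(p-1)}\,\phi(\alpha), \]
whose right-hand side is divisible by $u^{p(p-1)}$, contradicting the nonzero constant term of $\beta$. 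Hence no such $\overline{\N}'$ exists, and $T_{\st,2}^{\Qp}(\MM/\mm_E)$ is a non-split extension. The main obstacle is this last step: one must use the precise Frobenius formulas for the revised $\MM_{m,[1:b]}$ in Proposition~6.10 and verify that the $\phi_1$-stability of any hypothetical complementary sub really reduces to the displayed identity.
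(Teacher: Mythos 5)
Your first two steps agree with the paper's proof in spirit: reduce modulo $\mm_E$ (so $b$ disappears and $X \equiv 1\otimes c$, giving independence of $b$), and read off $\phi(\bar g_1) \equiv (1\otimes c)u^{p^2(p-1)}\bar g_1 + \bar g_2$, $\phi(\bar g_2) \equiv -(1\otimes c)u^{p(p-1)}\bar g_2$ and $\phi_1(\bar h) = c\,\bar g_1$; your identity showing the $\bar g_2$-coefficient of $\phi(\bar g_1)$ reduces to a unit is correct. But two things go wrong. First, your identification of the constituents is swapped. The sub generated by $\bar g_2$ has $\Fil^1 = u^{p(p-1)}\langle \bar g_2\rangle$ with $\phi_1$-multiplier $-c$, while the quotient on $\bar g_1$ has multiplier $+c$; by the rank-one dictionary (multiplier $a$ gives unramified part $\lambda_{a^{-1}}$ here, cf.\ Lemma \ref{lem:connected_breuil_module} with $\ell = \F_p$ and Proposition 5.4(1) of \cite{sav04}) the sub is $\lambda_{-c^{-1}}\omega^{1+j}$ and the quotient is $\lambda_{c^{-1}}\omega^{1+j}$, the opposite of what you assert. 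Since which character occurs as the subrepresentation is exactly part of the statement (and is what Corollary 6.15(2) needs), this is not a harmless relabeling.

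Second, the non-splitness argument has a genuine gap: you assume that a hypothetical complementary rank-one sub must be generated by $v = \bar g_1 + \alpha\bar g_2$. A splitting of the Galois representation only yields, via scheme-theoretic closure, \emph{some} rank-one sub-Breuil module with free quotient; its generator $\gamma_1\bar g_1 + \gamma_2\bar g_2$ need only have one unit coefficient, and the case $\gamma_1$ a non-unit, $\gamma_2$ a unit (the descent data permits $\gamma_1$ supported in degrees $\equiv p-1 \pmod{p^2-1}$) is neither excluded nor shown to carry the same character as $\langle\bar g_2\rangle$. Relatedly, your criterion ``$\phi(v) \in \overline{S}\,v$'' is weaker than the sub-Breuil-module condition ($\phi_1$-stability of the intersection with $\Fil^1$): for instance $\phi(\bar h) = 0$, so $\overline{S}\,\bar h$ is $\phi$-stable, yet it is not a subobject because $\phi_1(\bar h) = c\,\bar g_1 \notin \overline{S}\,\bar h$. (In your unit-coefficient case the needed implication does hold, via $u^{e_2}v \in \Fil^1$, and your divisibility contradiction is then correct — but this must be said, and the non-unit case must still be handled.) The paper's own proof sidesteps this case analysis by classifying all nonzero maps from $\MM/\mm_E$ to rank-one modules $\MM_E(F_2/\Qp,e_2,d,n)$: since $g_1,g_2$ lie in the image of $\phi_1$, their images are polynomials in $u^p$, and a short computation forces any such map to kill $g_2$ and to have $(d,n) = (c,j)$; the explicit map $g_1 \mapsto u^{p^2}\mathbf{e}$, $g_2 \mapsto 0$ together with this uniqueness gives both the ordering of the extension and $* \neq 0$ at once. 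To repair your argument you would need to carry out the analogous uniqueness statement on the sub side, covering generators with non-unit $\bar g_1$-coefficient, and correct the character labels.
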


\begin{proof} Write $\MM' = T_0(\MM/\mm_E)$.  Then $\Fil^1 \MM'$ is
  generated by  $u^{p-1} g_1 + c^{-1} g_2$ and $u^{e_2} g_1$, with
  $\phi_1(u^{p-1} g_1 + c^{-1} g_2) = cg_1$ and  $\phi_1(u^{e_2} g_1)
= u^{p^2(p-1)} cg_1 + g_2$.  Note that $\phi_1(u^{p(p-1)} g_2) = -cg_2$.  There is evidently a
 nontrivial map $\MM' \rightarrow \MM_E(F_2/\Q_p, e_2, c,j)$ sending
 $g_2 \mapsto 0$ and $g_1 \mapsto u^{p^2} \mathbf{e}$.  On the other
 hand if $f: \MM' \rightarrow \MM_E(F_2/\Qp,e_2,d,n)$ is a nontrivial map
 sending $g_1 \mapsto \alpha \mathbf{e}$ and $g_2 \mapsto \beta
 \mathbf{e}$, then $\alpha, \beta$ must both be polynomials in $u^p$
 since $g_1,g_2$ are in the image of $\phi_1$. Now if
 $\beta\neq 0$ then the relation $f\circ \phi_1 = \phi_1 \circ f$ on $u^{p(p-1)}g_2$
implies that $\beta$ is a unit times $u^p$; but then $f(u^{p-1}g_1 +
c^{-1}g_2) \in \langle u^{e_2} \mathbf{e} \rangle$ implies that
$\alpha$ has a linear term, a contradiction.  Therefore $\beta=0$, and
then it is easy to check that $c=d$ and $j=n$.  It follows that $*
\neq 0$.
\end{proof}

 (We also note the following typos in  the published version of the proof
of Theorem 6.12(4): in the first sentence, the expression
$\phi_1(u^{e_2})$ should be $\phi_1(u^{e_2} g_2)$; in the last sentence, the characters $\lambda_{c}$ should both be $\lambda_{c^{-1}}$.)

The proof of Corollary 6.15(2) should then invoke Theorem 6.12(5) in
lieu of Theorem 6.12(4) in the case of representations $\rho$ to which Theorem
6.12(5) applies, noting that the two choices for $x_0$ lead to
different reductions of $\rho$.

We now turn to deformation spaces of strongly divisible modules.  The
proof of the following proposition is identical to the proof that the
corresponding module $\MM_{m,[1:b]}$ of Proposition 6.10 is a strongly
divisible module.  As
noted in Remark 6.20, we omit the description of $N$ in the strongly
divisible module below.

\setcounter{equation}{20}   
\begin{prop} There exists a strongly divisible module with descent
  data and $\OO_E[[B]]$-coefficients as follows.
\begin{enumerate}
\setcounter{enumi}{5}
\item If $i=1$ and assuming that $w$ is a square in $E$,
$$ \MM_X = (S_{F_2,\OO_E[[B]]}) \cdot g_1 \oplus (S_{F_2,\OO_E[[B]]})\cdot g_2
,$$
$$ \Fil^1 \MM_X = S_{F_2,\OO_E[[B]]} \cdot (u^{p-1} g_1 + (w^{-1} X_B + (1
\otimes B) )g_2) + (\Fil^1 S_{F_2,\OO_E[[B]]}) \MM_X,$$
\begin{eqnarray*}
\phi(g_1) & = & \phi(X_B) u^{p^2(p-1)} g_1 + \left(1-X_B \phi(X_B )\frac{u^{pe_2}}{pw}\right) g_2  , \\
\phi(g_2) & = & pwg_1 - X_B u^{p(p-1)} g_2,
\end{eqnarray*}
$$\widehat{g}(g_1) = (\widetilde{\omega}_2^m \otimes 1)g_1, \qquad 
\widehat{g}(g_2) = (\widetilde{\omega}_2^{pm} \otimes 1)g_2.$$
\end{enumerate}
\end{prop}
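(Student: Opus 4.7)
The plan is to imitate the proof of the modified Proposition~6.10 (the case $i=1$, $\val_p(b)>0$) verbatim, with the element $b \in \OO_E$ replaced throughout by the formal variable $B$ and the element $X \in S_{F_2,\OO_E}^{\times}$ replaced by its versal lift $X_B \in S_{F_2,\OO_E[[B]]}^{\times}$. The whole earlier argument rests on the defining identity
$$X(1 \otimes wb) = 1\otimes w - \left(1 + \frac{u^{pe_2}}{p}\right)X\phi(X),$$
and by construction $X_B$ satisfies exactly the corresponding relation with $b$ replaced by $B$.

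First I would observe that $h := u^{p-1} g_1 + (w^{-1}X_B + (1 \otimes B))g_2$ generates $\Fil^1 \MM_X$ modulo $(\Fil^1 S_{F_2,\OO_E[[B]]})\MM_X$. Since the coefficient of $g_2$ in $h$ is a unit in $S_{F_2,\OO_E[[B]]}$ while $g_1$ does not lie in $\Fil^1 \MM_X$, the axiom $I\MM_X \cap \Fil^1 \MM_X = I\Fil^1 \MM_X$ for ideals $I \subset \OO_E[[B]]$ then follows by the same unit-coefficient argument as in the $\OO_E$-coefficient case.

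Next I would verify that $\phi(g_1)$ and $\phi(g_2)$ as prescribed actually lie in $\MM_X$, and not merely in $\MM_X[1/p]$. The only suspect term is $X_B \phi(X_B) u^{pe_2}/(pw)$ inside $\phi(g_1)$: rearranging the defining identity for $X_B$ yields
$$X_B \phi(X_B) \cdot \frac{u^{pe_2}}{p} = (1 \otimes w) - X_B(1 \otimes wB) - X_B \phi(X_B) \in S_{F_2,\OO_E[[B]]}.$$
The same identity then gives $\phi_1(h) = (1 \otimes w) X_B^{-1} g_1$, which shows that $\phi_1(\Fil^1 \MM_X)$ generates $\MM_X$ over $S_{F_2,\OO_E[[B]]}$. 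Compatibility with the descent data is automatic, since $g_1$ and $g_2$ are eigenvectors with characters $\widetilde{\omega}_2^m \otimes 1$ and $\widetilde{\omega}_2^{pm} \otimes 1$, and the formulas for the filtration and for Frobenius are formally identical to those of the $\OO_E$-coefficient case, which are already compatible with descent data.

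Finally, as noted in Remark~6.20, the monodromy operator $N$ on $\MM_X$ satisfying the remaining axioms of a strongly divisible module is produced by the uniqueness theorem of Breuil applied to the underlying strongly divisible $\Z_p$-module structure; its compatibility with the $\OO_E[[B]]$-action and the descent data is then automatic from uniqueness. The main technical point to watch is the integrality of $\phi(g_1)$, which the rearrangement above handles uniformly in $B$; once this cancellation is in hand, the remaining verifications proceed mechanically in parallel with the $\OO_E$-coefficient case.
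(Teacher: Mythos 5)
Your proposal is correct and matches the paper's own argument, which simply observes that the proof of the modified Proposition~6.10 goes through verbatim with $b$ replaced by $B$ and $X$ by $X_B$: the same generator $h$ of $\Fil^1\MM_X$ modulo $(\Fil^1 S)\MM_X$, the same unit-coefficient argument for $I\MM_X\cap\Fil^1\MM_X=I\Fil^1\MM_X$, the defining relation for $X_B$ giving integrality of $\phi(g_1)$ and $\phi_1(h)=(1\otimes w)X_B^{-1}g_1$, and the monodromy operator omitted as in Remark~6.20.
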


Finally, one must amend the proof of Theorem 6.24 to include a proof that the
canonical injection
$$ R(2,\tau(\MM_X),\rhobar(\MM_X))_{\OO_E} \rightarrow \OO_E[[B]]$$
is a surjection; this proceeds exactly along the strategy outlined in
the proof of Theorem 6.24.  Indeed, let $\MM''$ denote the minimal
Breuil module with descent data from $F_2$ to $\Qp$ associated to the
character $\lambda_{-c^{-1}} \omega^{1+j}$, with generator
$h$ such that $\phi_1(h) = -c^{-1} h$.  
Then a map $f: \MM'' \rightarrow T_0(\MM_X/(\mm_E,B^2))$ must send
$h$ to an element of the form $\alpha u^{e_2} g_1 +
\beta(u^{p-1}g_1 + (w^{-1} X_B + B) g_2$) (where, abusing notation,
we identify elements of $S_{F_2,\OO_E[[B]]}$ with their images in
$(\Fpp\otimes \kk_E[B]/(B^2))[u]/u^{e_2 p}$).  Write $\alpha = \alpha_0 +
B \alpha_1$ and $\beta = \beta_0 + B \beta_1$ to separate out the
terms involving $B$.  The relation $f(\phi_1(h)) =
\phi_1(f(h))$ shows first that $\alpha_0 = a u^p$, $\beta_0 =
-au^{p^2}$ for some $a \in \kk_E$, by considering the relation mod
$B$; then, after some algebra, the full relation eventually implies $a=0$.
Thus the image of $f$ lies in $B \cdot T_0(\MM_X/(\mm_E,B^2))$, as desired.

\bibliographystyle{amsalpha} %Bibliography style file X.bst
\bibliography{tobybib08} % Bibliography database file Y.bib
\end{document}